\begin{document}

\setenumerate{label=(\alph*), font=\normalfont}

 \newtheorem{thm}{Theorem}[section]
 \newtheorem{prop}[thm]{Proposition}
 \newtheorem{lem}[thm]{Lemma}
 \newtheorem{cor}[thm]{Corollary}
 \newtheorem{conj}[thm]{Conjecture}
\newtheorem{question}[thm]{Question}
 
 \theoremstyle{definition}
 \newtheorem{example}[thm]{Example}
 \newtheorem{defn}[thm]{Definition}
 \newtheorem{remark}[thm]{Remark}

\numberwithin{equation}{section}

\newcommand{\bbN}{{\mathbb{N}}}
\newcommand{\bbZ}{{\mathbb{Z}}}
\newcommand{\bbR}{{\mathbb{R}}}
\newcommand{\bbP}{{\mathbb{P}}}
\newcommand{\bbA}{{\mathbb{A}}}
\newcommand{\bbG}{{\mathbb{G}}}
\newcommand{\bbC}{{\mathbb{C}}}
\newcommand{\bbF}{{\mathbb{F}}}
\newcommand{\bbQ}{{\mathbb{Q}}}
\newcommand{\In}{\operatorname{In}}
\newcommand{\Char}{\operatorname{Char}}
\newcommand{\Cr}{\operatorname{Cr}}
\newcommand{\tr}{\operatorname{Tr}}
\newcommand{\GL}{\operatorname{GL}}
\newcommand{\Mat}{\operatorname{M}}
\newcommand{\SL}{\operatorname{SL}}
\newcommand{\PGL}{\operatorname{PGL}}
\newcommand{\PSL}{\operatorname{PSL}}
\newcommand{\Orth}{\operatorname{O}}
\newcommand{\SOrth}{\operatorname{SO}}
\newcommand{\GO}{\operatorname{GO}}
\newcommand{\PGO}{\operatorname{PGO}}
\newcommand{\POrth}{\operatorname{PO}}
\newcommand{\Stab}{\operatorname{Stab}}
\newcommand{\Aut}{\operatorname{Aut}}
\newcommand{\Hom}{\operatorname{Hom}}
\newcommand{\End}{\operatorname{End}}
\newcommand{\Spec}{\operatorname{Spec}}
\newcommand{\rank}{\operatorname{rank}}
\newcommand{\pr}{\operatorname{pr}}
\newcommand{\id}{\operatorname{id}}
\newcommand{\ed}{\operatorname{ed}}
\newcommand{\pmed}{\operatorname{pmed}}
\newcommand{\Crdim}{\operatorname{Crdim}}
\newcommand{\trdeg}{\operatorname{trdeg}}
\newcommand{\Sym}{\operatorname{S}}
\newcommand{\Alt}{\operatorname{A}}
\newcommand{\Gal}{\operatorname{Gal}}
\newcommand{\Sch}{\operatorname{Sch}}
\newcommand{\im}{\operatorname{im}}
\newcommand{\Id}{\operatorname{Id}}
\newcommand{\Pic}{\operatorname{Pic}}
\newcommand{\Ext}{\operatorname{Ext}}
\newcommand{\calA}{\mathcal{A}}
\newcommand{\calB}{\mathcal{B}}
\newcommand{\calC}{\mathcal{C}}
\newcommand{\calF}{\mathcal{F}}
\newcommand{\calG}{\mathcal{G}}
\newcommand{\calL}{\mathcal{L}}
\newcommand{\calR}{\mathcal{R}}
\newcommand{\calX}{\mathcal{X}}
\newcommand{\calY}{\mathcal{Y}}
\newcommand{\calT}{\mathcal{T}}
\newcommand{\calO}{\mathcal{O}}
\newcommand{\Fields}{\mathbf{Fields}}
\newcommand{\Var}{\mathbf{Var}}
\newcommand{\gVar}{\textrm{-}\mathbf{Var}}
\newcommand{\biratarrow}{\stackrel{\simeq}{\dasharrow}}
\newcommand{\moduli}[2]{\overline{M}_{#1,#2}}
 
\title[Versality and rational points]
{Versality of algebraic group actions
and rational points on twisted varieties}

\author{Alexander Duncan}
\address{Alexander Duncan\newline 
Department of Mathematics, University of Michigan, 
Ann Arbor, MI 48109, USA}
\thanks{A. Duncan was partially supported by
National Science Foundation
RTG grants DMS 0838697 and DMS 0943832.}

\author{Zinovy Reichstein}
\address{Zinovy Reichstein\newline
Department of Mathematics, University of British Columbia,
Vancouver, BC V6T 1Z2, Canada}
\thanks{Z. Reichstein was partially supported by 
National Sciences and Engineering Research Council of
Canada Discovery grant 250217-2012.}

\subjclass[2000]{Primary 14L30, 14G05, Secondary
 14M17, 14H10, 14M25}
\keywords{algebraic group, group action, torsor, versality, 
twisting, rational point, homogeneous space, universal torsor, 
toric variety, quadric, cubic hypersurface}

\begin{abstract} 
We formalize and study several competing notions of versality for
an action of a linear algebraic group on an algebraic variety $X$. 
Our main result is that these notions of versality
are equivalent to various statements concerning rational points
on twisted forms of $X$ (existence of rational points, 
existence of a dense set of rational points, etc.) We give 
applications of this equivalence in both directions, to study
versality of group actions and rational points on
algebraic varieties. We obtain similar results
on $p$-versality for a prime integer $p$. An appendix, 
containing a letter from J.-P.~Serre,
puts the notion of versality in a historical perspective.
\end{abstract}

\maketitle

\section{Introduction}
\label{sect1}

Let $k$ be a base field and $G$ be a linear algebraic 
group defined over $k$.  We say that a $G$-action on 
an irreducible $k$-variety $X$ is

\begin{itemize}
\item
{\em weakly versal}, if for every field $K/k$, with $K$ infinite,
and every $G$-torsor
$T \to \Spec(K)$ there is a $G$-equivariant $k$-morphism $f \colon T \to X$,

\item
{\em versal}, if every $G$-invariant dense open subvariety
of $X$ is weakly versal.
\end{itemize}

\noindent
Note that here we view $T$ as a $k$-scheme; it will not be of finite type
in general.  The advantage of the second notion over the first is that 
it only depends on $X$ up to (a $G$-equivariant) birational isomorphism. 
In the case where $X \to B$ is a $G$-torsor
over some irreducible base space $B$,
our definition of versality is identical to~\cite[Definition 5.1]{gms}.
Versal and closely related ``generic" objects (cf.~Remark~\ref{rem.generic})
naturally arise in many parts of algebra and algebraic geometry, such 
as the theory of central simple algebras~\cite{procesi, amitsur, 
saltman-book}, Galois theory~\cite{jly}, and the study of algebraic 
surfaces~\cite{duncan2, tokunaga}.  For a historical perspective 
we refer the reader to the appendix.

Our main result, Theorem~\ref{thm1} below, relates versality 
of $X$ to the existence of $K$-points on certain 
$K$-forms of $X$, for field extensions $K/k$. 
To state it, we need the following additional definitions, 
which will be used throughout the paper.
We will say that a $G$-action on $X$ is

\begin{itemize}
\item
{\em very versal}, if there exists a linear representation $G \to \GL(V)$ 
and a $G$-equivariant dominant rational map $V \dasharrow X$,

\item
{\em birationally linear}, if
there exists a linear representation $G \to \GL(V)$ and a
$G$-equivariant birational isomorphism between $V$ and $X$, 

\item
{\em stably birationally linear}, if there 
exists a linear representation $G \to \GL(W)$ such that  
$X \times W$ is birationally linear.
\end{itemize}

If $K/k$ is a field extension, with $K$ infinite, and 
$\pi \colon T \to \Spec(K)$ 
is a $G$-torsor, we will refer to $(T, K)$ as a {\em twisting pair} 
(see Definition~\ref{def:twistNotation}) and write
$^T X$ for the $K$-variety obtained by twisting $X_K$ by $T$ 
(see Section~\ref{sect.twisting1}).

\begin{thm} \label{thm1}
Let $G$ be a linear algebraic group defined over $k$.
A $G$-action on an irreducible quasiprojective $k$-variety $X$ is
\begin{enumerate}
\item weakly versal if and only if, for every twisting pair $(T, K)$,
\linebreak
$\, ^T X(K) \neq \emptyset$, \label{thm1.a}

\item versal if and only if, for every twisting pair $(T, K)$, $K$-points are
dense in $\, ^T X$, \label{thm1.b}

\item very versal if and only if, for every twisting pair $(T, K)$, 
$\, ^T X$ is $K$-unirational, \label{thm1.c}

\item stably birationally linear if and only if, for every twisting 
pair $(T, K)$, $\, ^T X$ is stably $K$-rational. \label{thm1.d}
\end{enumerate}
\end{thm}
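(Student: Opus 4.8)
The plan is to establish the four equivalences through a common mechanism: a dictionary between $G$-equivariant morphisms into $X$ (from torsors, from representations, etc.) on the one hand, and $K$-points and rationality properties of the twisted form $\, ^T X$ on the other. The starting observation is the standard descent correspondence: if $(T,K)$ is a twisting pair, then $T$-twisting induces an equivalence between the category of quasiprojective $K$-varieties with $G_K$-action and the category of quasiprojective $K$-varieties (with the "split" form $X_K$ sent to $\, ^T X$, and $G_K$ acting trivially after twisting since $G$ is an inner form of itself via $T$ — more precisely one twists by the adjoint action). Under this equivalence, a $G$-equivariant $K$-morphism $T \to X$ corresponds to a $K$-point of $\, ^T X$. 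Concretely, $\Spec(K) \to T$ is not available, but $T$ itself becomes, after twisting, the trivial torsor $G_K \to \Spec(K)$, and a $G$-equivariant map from the trivial torsor to $X_K$ is the same as a $K$-point of $X_K$; running this through the twist gives $\, ^T X(K)$. I would first state this carefully as a lemma (or cite Section~\ref{sect.twisting1}), since all four parts rest on it.

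For part~\eqref{thm1.a}, weak versality says precisely that for every twisting pair there is a $G$-equivariant morphism $T \to X$, which by the dictionary is exactly $\, ^T X(K) \neq \emptyset$; one should check the infinitude hypothesis on $K$ is handled consistently on both sides (it is, since "twisting pair" already requires $K$ infinite). For part~\eqref{thm1.b}, versality means every dense open $G$-invariant $U \subseteq X$ is weakly versal; since twisting commutes with passing to invariant opens, $\, ^{T}U = \, ^{T}X \cap (\text{the corresponding open})$, so weak versality of all such $U$ translates to: every nonempty open subset of $\, ^T X$ of the form $\, ^T U$ has a $K$-point. The point to nail down is that the opens arising as $\, ^T U$ for $G$-invariant $U$ are cofinal among all nonempty opens of $\, ^T X$ — this follows because every nonempty open of $\, ^T X$ contains one defined over $K$, which descends (un-twists) to a $G$-invariant open of $X_K$, and then one spreads out to a $G$-invariant open defined over $k$ using a Galois-descent / field-of-definition argument. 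Density of $K$-points is then equivalent to every such $\, ^T U$ meeting $X(K)$, which is the claim.

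For parts~\eqref{thm1.c} and~\eqref{thm1.d} I would again run the dictionary, now starting from a representation $V$. The key extra input is that for any linear representation $G \to \GL(V)$, the twisted form $\, ^T V$ is $K$-rational — indeed $V$ with its linear $G$-action, twisted by a torsor, is a torsor under a vector group / more simply a twisted affine space, hence a form of $\mathbb{A}^n$ that is rational over $K$ (this is where one invokes the no-name lemma / triviality of twisted forms of vector spaces, or at worst passes to a stable version). Then: very versality gives a $G$-equivariant dominant rational map $V \dasharrow X$, which twists to a dominant rational map $\, ^T V \dasharrow \, ^T X$ from a $K$-rational variety, i.e. $\, ^T X$ is $K$-unirational; conversely, if $\, ^T X$ is $K$-unirational for all $(T,K)$, apply this to the generic torsor (the twisting pair coming from $\Spec K = BG$'s function field, or from $G \to G/\!\!/ G$ — the universal one, as in~\cite{gms}) to get a dominant map from a rational variety over that $K$, untwist, and spread out to obtain $V \dasharrow X$ over $k$. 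Part~\eqref{thm1.d} is the same argument with "dominant rational map from $\mathbb{A}^n$" replaced by "birational isomorphism after multiplying by $\mathbb{A}^m$", using that $(X \times W)$ being birationally linear translates (via twisting, and $\, ^T W$ being $K$-rational) to $\, ^T X \times \mathbb{A}^m_K$ being $K$-rational, and conversely using the universal torsor plus a spreading-out/specialization argument.

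The main obstacle, and the step deserving the most care, is the "only if / converse" direction in parts~\eqref{thm1.b}--\eqref{thm1.d}: producing a map defined over the base field $k$ (respectively a $G$-invariant open, a dominant rational map, a stable birational isomorphism) from the pointwise-over-$K$ hypotheses. This requires a genuine universal object — the generic $G$-torsor, obtained from a generically free linear representation $G \to \GL(U)$ with rational quotient, so that $K = k(U)^G = k(U/G)$ and $T = U \to U/G$ restricted to the generic point — together with a spreading-out argument: a $K$-point / $K$-rational parametrization of $\, ^T X$ over this $K$ is a rational section / rational map over the generic point of $U/G$, which extends over a dense open of $U/G$, and pulling back along $U \dasharrow U/G$ and untwisting yields the desired $G$-equivariant object over $k$. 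Verifying that this generic torsor is "sufficiently generic" to detect versality (as opposed to merely weak versality) — essentially that any dense open of $X$ is hit — is exactly the content one must not skip, and it is where the quasiprojectivity of $X$ and the existence of generically free representations for linear algebraic groups get used.
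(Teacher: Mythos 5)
Your treatment of parts (a), (c), and (d) is broadly in line with the paper's: part (a) is exactly the adjunction argument, and for (c), (d) you correctly identify the two ingredients --- that twisting a linear representation preserves rationality (Hilbert 90), and that the converse direction must go through the ``generic'' torsor $S \to \Spec(F)$ built from a generically free representation, followed by a spreading-out step (the paper's Lemma~\ref{lem:twistByLinRep} makes this precise). The substantive problem is your argument for part (b), which is the part the paper singles out as the delicate one.

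You propose to deduce density of $K$-points in $^TX$ from versality by a cofinality argument: every $G$-invariant dense open $U \subset X$ (over $k$) is $(T,K)$-weakly versal, so $^TU(K) \ne \emptyset$; and you claim the sets $^TU$ are cofinal among nonempty opens of $^TX$ because any $K$-open of $^TX$ untwists to a $G$-invariant $K$-open of $X_K$, which you then ``spread out'' to a $G$-invariant open defined over $k$. That last step is false: a $G$-invariant open of $X_K$ is cut out by equations with coefficients in $K$ and in general does \emph{not} descend to a $k$-open of $X$, and no Galois/field-of-definition argument can force it to (the extension $K/k$ is transcendental). More tellingly, if your cofinality claim were correct, then $(T,K)$-versality alone would imply density of $K$-points in $^TX$, for any single twisting pair $(T,K)$. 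The paper's Example~\ref{ex.motivation} (a genus $\geqslant 2$ curve $X$ with $G = \Aut(X)$, $K = k(X/G)$, $T$ the generic fibre of $X \to X/G$) shows this is wrong: $X$ is $(T,K)$-versal there, yet $^TX(K)$ is finite. So the step you want to ``nail down'' cannot be nailed down, and the gap is not a matter of filling in details --- the approach does not work.

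What the paper actually does is finer on two counts, and both are needed. First, it replaces ``some $f_L(T)$ meets each open'' with the much stronger Lemma~\ref{lem:arithVersalEquiv}: fix a splitting field $L/K$ and a point $s \in T(L)$; then density of $K$-points in $^TX$ is equivalent to density in $X_L$ of the \emph{single-point} images $f_L(s)$ as $f$ ranges over $G$-equivariant $k$-morphisms $T \to X$. Your formulation only controls the union of the images $f_L(T)$, which is weaker and is exactly what fails in Example~\ref{ex.motivation}. Second, to produce an $f$ with $f_L(s)$ avoiding a prescribed proper closed $Y \subsetneq X_L$, the paper must use versality at a \emph{different} twisting pair than $(T,K)$: it factors $f = f_2 \circ f_1$, where $f_2 \colon V \dasharrow X$ comes from $(S,F)$-versality of $X$ ($V$ a generically free representation, $F = k(V)^G$) via Lemma~\ref{lem:canHitAnything}, and $f_1 \colon T \to V$ comes from Lemma~\ref{lem.Hilbert90} plus the already-established equivalence of Lemma~\ref{lem:arithVersalEquiv} applied to $V$ itself. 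So the hypothesis that $X$ is versal --- not merely $(T,K)$-versal --- is used in an essential and non-obvious way. Your final paragraph gestures at the generic torsor being the crucial universal object, but the part (b) argument you actually give never invokes it, which is precisely where the proof breaks.
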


Theorem~\ref{thm1} tells us, in particular, that for a $G$-variety $X$,
the implications
\begin{equation} \label{e.hierarchy}
\text{\small stably birationally linear
$\implies$
very versal 
$\implies$
versal
$\implies$
weakly versal} 
\end{equation}
cannot be reversed in general, even if $G = \{ 1 \}$. Simple examples of versal 
$G$-varieties $X$ that are not very versal can be constructed as follows.
Suppose $X$ is versal. Then by the definition of versality
$X \times_k Y$ is also versal, for any variety $Y$ 
with trivial $G$-action, provided that $k$-points are dense in $Y$.  
If $Y$ is not unirational (e.g., is an elliptic curve), then 
$X \times_k Y$ is not unirational and hence, cannot be very versal.  

On the other hand, in many natural examples $X$ is geometrically 
unirational, i.e., $X$ becomes unirational over an algebraic closure $\bar{k}$.
In this situation  the twisted $K$-variety $^T X$ is geometrically 
unirational for every twisting pair $(T, K)$.  For a smooth 
geometrically unirational variety $Y$ defined over an infinite
field $K$, it is not known whether or not the following properties 
are equivalent:
(i) $Y$ is $K$-unirational, (ii) $K$-points are dense in $Y$, 
and (iii) $Y$ has a $K$-point; see~\cite[Question 1.3]{kollar}.
It is thus conceivable that if $X$ is smooth and geometrically 
unirational then the second and third implications in~\eqref{e.hierarchy}
may, indeed, be reversed. This explains why part (b) of Theorem~\ref{thm1},
which takes the most delicate arguments to prove, is never truly used in 
the specific examples in this paper, i.e., why versal varieties 
in these examples turn out to be very versal.

The rest of this paper is structured as follows. Section~\ref{sect.prel}
is devoted to notation and preliminaries, and
Section~\ref{sect.twisting1} to a discussion 
of the twisting operation. In
Sections~\ref{sect.proof-of-thm1ab} and~\ref{sect.proof-of-thm1cd} we
prove Theorem~\ref{thm1}.
In Section~\ref{sec:ModuliSpaces} we 
show that every $K$-form of the moduli spaces
$\moduli{0}{n}$ and $\moduli{1}{n}$
are unirational over $K$ for certain $n$.
In Section~\ref{sect.homogSpace} we use Theorem~\ref{thm1} in the other
direction to give a versality criterion for the action 
of a closed subgroup $G \subset A$ on a homogeneous 
space $A/B$. 
In Section~\ref{sect.p-versality} we define and study the related 
notions of $p$-versality, where $p$ is a prime integer.
We show that $p$-versality is related to $0$-cycles on
twisted varieties (rather than points) and that
for smooth varieties, weak $p$-versality is
equivalent to $p$-versality.
Sections~\ref{sect.proj-rep} and~\ref{sect.quadric-cubic} feature 
versality criteria for group actions on projective spaces and
low degree hypersurfaces.  As an application, we show that 
a recent conjecture of I.~Dolgachev
on the Cremona dimension is incompatible with a long-standing 
conjecture of J.~W.~S.~Cassels and P.~Swinnerton-Dyer about rational points 
on cubic hypersurfaces.

\section{Notation and preliminaries}
\label{sect.prel}

Let $k$ be a field; we will denote an algebraic closure of $k$
by $\bar{k}$.

A \emph{$k$-variety $X$} is a reduced, quasiprojective scheme 
of finite type over $k$, not necessarily irreducible.
A \emph{morphism of $k$-varieties} 
is a morphism of schemes respecting the structure morphism to $k$.
A \emph{product of $k$-varieties} is a fibre product
of schemes over $k$.

An \emph{algebraic group $G$ over $k$} is a smooth affine group scheme 
of finite type over $k$.  An \emph{action of $G$ on $X$} 
will always be a morphic action, i.e., 
a morphism $\sigma: G \times X \to X$ satisfying 
the standard conditions \cite{GIT}. We will sometimes refer to
$X$ with an action of $G$ as a $G$-variety.

Given a $k$-variety $X$ and a field extension $K/k$,
the symbol $X_K$ denotes the $K$-variety $X \times_{\Spec(k)} \Spec(K)$.
A \emph{$k$-form of $X$} is a $k$-variety $X'$ such that
$X_{\bar{k}} \simeq X'_{\bar{k}}$.

A \emph{right} (resp. \emph{left}) \emph{$G$-torsor over $Y$} 
is a flat morphism $\psi : X \to Y$ of $k$-schemes such that 
$G$ acts on $X$ on the right (resp. left), 
and the map $\sigma \times \pi_2: G_Y \times_Y X \to X \times_Y X$
is an isomorphism (where $\sigma$ is the action map and $\pi_2$ is the
second projection).
The set of $G$-torsors over 
a field $K$ is in bijection with the Galois cohomology set $H^1(K,G)$.

A $k$-variety is \emph{rational} if it is $k$-birationally equivalent to
$\bbA^n$, for some positive integer $n$. A $k$-variety $X$ is
\emph{unirational} if there exists a dominant rational $k$-map
$\bbA^n \dasharrow X$.

A $G$-action on $X$ is \emph{generically free}
if there exists a dense $G$-invariant open subvariety $X_0 \subset X$
such that the scheme-theoretic stabilizer of every point $x \in X_0$
is trivial.
This is equivalent to the existence of a dense $G$-invariant open 
subvariety $U$ of $X$ which is the total space of a $G$-torsor
$\pi: U \to B$; see 
\cite[Theorem 4.7]{BerhuyFavi}.  If $B$ is irreducible, 
we say that $X$ is a \emph{primitive} $G$-variety.

Given a generically free primitive $G$-variety $X$ one obtains a
$G$-torsor over $\Spec(k(B))$ by taking the generic fibre of $\pi$.
Conversely, given a $G$-torsor over a finitely generated field extension
$K/k$, one can recover a birational equivalence class of
generically free primitive $G$-varieties for
which $k(B)=K$.
Indeed, the $G$-torsor is an affine $K$-variety and
thus can be defined over some finitely generated $k$-subalgebra of $K$.

The following remark is an immediate consequence of this correspondence.

\begin{remark} \label{rem.gen-free}
A $G$-action on an irreducible $k$-variety $X$ is 
weakly versal if and only if, for every generically 
free primitive $G$-variety
$Y$ defined over $k$, with $k(Y)^G$ infinite,
there exists a $G$-equivariant $k$-rational 
map $Y \dasharrow X$.
\end{remark}

\begin{prop} \label{prop.weakly-versal}
Let $X$ be an irreducible $G$-variety defined over $k$. If $G$ has a fixed
$k$-point $x \in X(k)$ then $X$ is weakly versal.
\end{prop}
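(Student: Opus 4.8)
The plan is to use the characterization of weak versality in Remark~\ref{rem.gen-free}: it suffices to show that for every generically free primitive $G$-variety $Y$ defined over $k$ with $k(Y)^G$ infinite, there is a $G$-equivariant $k$-rational map $Y \dasharrow X$. In fact, since a fixed point is available, I expect to produce an honest $G$-equivariant morphism $Y \to X$, not merely a rational map, which makes the argument cleaner.

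First I would fix the $G$-fixed point $x \in X(k)$. This gives a $k$-morphism $\Spec(k) \to X$ whose image is $\{x\}$, and since $x$ is fixed by $G$, this morphism is $G$-equivariant when $\Spec(k)$ is given the trivial $G$-action. Next, given any $G$-variety $Y$ over $k$ (in particular a generically free primitive one as in Remark~\ref{rem.gen-free}), the structure morphism $Y \to \Spec(k)$ is $G$-equivariant, again with $\Spec(k)$ carrying the trivial $G$-action. Composing, I obtain a $G$-equivariant $k$-morphism $Y \to \Spec(k) \to X$, namely the constant map sending all of $Y$ to $x$. This is in particular a $G$-equivariant $k$-rational map $Y \dasharrow X$, so the criterion of Remark~\ref{rem.gen-free} is satisfied and $X$ is weakly versal.

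Alternatively, and perhaps more transparently, one can argue directly from the definition: given a field extension $K/k$ with $K$ infinite and a $G$-torsor $\pi \colon T \to \Spec(K)$, the composite $T \xrightarrow{\pi} \Spec(K) \to \Spec(k) \xrightarrow{x} X$ is a $G$-equivariant $k$-morphism $T \to X$ (the first two maps are $G$-equivariant for the trivial action on the bases, and $x$ is $G$-fixed), which is exactly what weak versality requires. This avoids invoking Remark~\ref{rem.gen-free} altogether.

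There is essentially no obstacle here: the only point requiring any care is the bookkeeping of which $G$-actions are in play — one must note that the base schemes $\Spec(k)$ and $\Spec(K)$ carry the trivial $G$-action and that the hypothesis ``$x$ is a fixed $k$-point'' is precisely what makes the inclusion $\Spec(k) \hookrightarrow X$ equivariant. The infiniteness of $K$ in the definition of weak versality plays no role in this particular proposition.
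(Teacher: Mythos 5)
Your second (direct) argument is exactly the paper's proof: the constant map $T \to X$ sending all of $T$ to the $G$-fixed point $x$ is $G$-equivariant, which is all that weak versality requires. The first version via Remark~\ref{rem.gen-free} is a harmless reformulation of the same idea, so the proposal matches the paper's approach.
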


\begin{proof}
For every field $K/k$ and every $G$-torsor $T \to \Spec(K)$,
the constant map $T \to X$, sending all of $T$ to $x$, is
$G$-equivariant.
\end{proof}

We will view a linear representation $G \to \GL(V)$ of $G$ on 
a $k$-vector space $V$ as a $G$-action on the affine space associated to $V$.
By abuse of notation, we will denote this affine space by the same 
symbol $V$.  We will say that a linear representation $G \to \GL(V)$ is
generically free if the $G$-variety $V$ is generically free.
The $G$-torsors $U \to B$, associated to generically free linear
representations $G \to \GL(V)$, as above, will play an important 
role in this paper. Note that torsors of this form may be viewed as 
``algebraic approximations" to the universal torsor over 
the classifying space of $G$, in the sense of B.~Totaro~\cite{totaro}.

\begin{remark} \label{rem.gf-rep}
Let $G$ be an algebraic group over $k$. Then there exists a generically 
free linear representation $G \to \GL(V)$;
see~\cite[Remark 4.12]{BerhuyFavi}. Moreover,
adding a copy of the trivial representation if necessary,
we can choose $V$ so that $k(V)^G$ is an infinite field.
\end{remark}

\begin{remark}
It is vital that algebraic groups are \emph{linear} throughout this paper.
For example, if $A \ne \{ 1 \}$ is an abelian variety then 
no versal $A$-torsor can
exist.  Otherwise the exponent of every element of $H^1(K, A)$, for any
field $K/k$, would divide the exponent of the versal torsor. 
However, there are $A$-torsors of arbitrarily high exponent 
(over suitable $K$); cf.~\cite[Section 3]{brosnan}.
\end{remark}

\begin{prop} \label{prop:versalGeomIrred}
If $X$ is a versal irreducible $G$-variety then $X$ 
is geometrically irreducible.
\end{prop}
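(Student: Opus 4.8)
The plan is to argue by contrapositive: if $X$ is not geometrically irreducible, I will produce a twisting pair $(T,K)$ such that $^T X(K) = \emptyset$, which by part~\eqref{thm1.a} of Theorem~\ref{thm1} (applied to a $G$-invariant dense open subvariety, using part~\eqref{thm1.b} or directly the definition of versality) shows $X$ is not weakly versal, hence not versal. Actually the cleanest route is to reduce to the case $G = \{1\}$ first. If $X$ is versal as a $G$-variety, then restricting to the action of the trivial subgroup, every $\{1\}$-torsor over an infinite field $K$ is just $\Spec(K)$ itself, and weak versality forces a $k$-point, so in particular $X(K) \neq \emptyset$ for the generic-type fields $K$ we care about; but this alone is not enough. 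Instead I will keep $G$ in play and use that versality is preserved under passing to $G$-invariant dense opens.

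First I would note that since $X$ is irreducible over $k$, the geometric components of $X_{\bar k}$ are permuted transitively by $\Gal(\bar k / k)$; let $L/k$ be the finite separable extension that is the field of definition of one component, so that $X_{\bar k}$ has $[L:k] = m > 1$ components and $\Gal(\bar k/L)$ stabilizes one of them. The key observation is that $G$, being a \emph{connected}-or-not linear algebraic group, acts on $X$ and hence permutes the geometric components; but $G$ is defined over $k$ and $G_{\bar k}$ is generated by its identity component together with component-group elements. More to the point: a $G$-equivariant rational map must send the generic point of the source variety into a single geometric component of the target. So if $Y$ is a generically free primitive $G$-variety with $k(Y)^G$ infinite and with the property that no geometric component of $X$ descends to a variety over the appropriate function field, there can be no $G$-equivariant $k$-rational map $Y \dasharrow X$.

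Concretely, I would take a generically free linear representation $G \to \GL(V)$ with $k(V)^G$ infinite (Remark~\ref{rem.gf-rep}), giving a versal $G$-torsor $U \to B$ with $K := k(B)$. The composite $U \hookrightarrow V \dasharrow X$ would have to exist by weak versality (Remark~\ref{rem.gen-free}), equivalently $^T X(K) \neq \emptyset$ where $T$ is the generic fibre. But $^T X$ is a $K$-form of $X_K$, so its geometric components are a torsor under the same Galois set as those of $X_{\bar k}$; since $X$ is geometrically reducible and — this is the crux — the $K$-points of a $K$-variety all lie on geometrically irreducible $K$-subvarieties, a $K$-point of $^T X$ would force one geometric component to be $\Gal(\bar K/K)$-stable and defined over $K$. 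The main obstacle is therefore to choose $V$ (equivalently $K = k(V)^G$) so that the relevant component is \emph{not} defined over $K$: one does this by arranging that the finite extension $L \otimes_k K$ is a field (a nontrivial field extension of $K$), which holds automatically because $L/k$ is separable and $K = k(B)$ is a purely transcendental-type extension of $k$ — more precisely $L$ and $K$ are linearly disjoint over $k$ since $L/k$ is finite separable and $K$ contains $k$ algebraically closed in it after replacing $B$ by a suitable model. Then no geometric component of $^T X$ is $K$-rational, so $^T X(K) = \emptyset$, contradicting weak versality. Hence $X$ must be geometrically irreducible.
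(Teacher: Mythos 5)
There is a genuine gap at the crux of your argument: you assert that a $K$-point of $^T X$ ``would force one geometric component to be $\Gal(\bar K/K)$-stable and defined over $K$.'' This is false. A $K$-rational point of a geometrically reducible $K$-variety can lie on the \emph{intersection} of several Galois-conjugate geometric components, none of which is individually defined over $K$. A standard example: $X = \{x^2+y^2=0\} \subset \mathbb{A}^2_{\mathbb{R}}$ is irreducible over $\mathbb{R}$, geometrically reducible (two conjugate lines $y = \pm i x$), and has the $\mathbb{R}$-point $(0,0)$. This same example, with $G = \{1\}$, shows that your argument proves too much: you only invoke \emph{weak} versality to get a $K$-point of $^T X$, but weak versality does not imply geometric irreducibility. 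Indeed, by Proposition~\ref{prop.weakly-versal} the variety $X$ above is weakly versal (the origin is a $G$-fixed $k$-point), yet it is geometrically reducible. The hypothesis of versality, not just weak versality, is essential here.

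The missing idea is precisely how to exploit the stronger hypothesis. The intersection $Y = X_1 \cap \dots \cap X_n$ of the geometric components (over $k^s$) is Galois-invariant and $G$-invariant, hence descends to a proper closed $G$-invariant $k$-subvariety of $X$. Versality, via Remark~\ref{rem.gen-free}, then gives a $G$-equivariant rational map $V \dashrightarrow X \setminus Y$ from a geometrically irreducible linear representation $V$. The closure of the image is geometrically irreducible and defined over $k$, so it lies in one component $X_i$; being defined over $k$ and hence Galois-stable, it lies in every $X_j$, hence in $Y$ --- contradicting the fact that the image avoids $Y$. This is the route the paper takes, and it requires no twisting machinery at all, hence avoids a secondary concern with your approach: invoking Theorem~\ref{thm1}\ref{thm1.b} here would be circular, since its proof cites Proposition~\ref{prop:versalGeomIrred}. (Theorem~\ref{thm1}\ref{thm1.a} alone would be safe, but, as above, $K$-points alone do not suffice.) You also do not need the reduction to a linearly disjoint field $L$ --- the Galois-transitivity of the components over $k^s$, together with the fact that $V$ is geometrically irreducible and defined over $k$, does all the work.
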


\begin{proof}
It suffices to show that $X_{k^s}$ is irreducible, 
where $k^s$ denotes a separable closure of $k$;
see~\cite[Exercise 2.3.15(a)]{hartshorne}.
Let $X_1, \dots, X_n$ denote the irreducible components of 
$X_{k^s}$.  We want to show that $n = 1$.
We will argue by contradiction.  Assume $n \geqslant 2$. 
Since $X$ is irreducible over $k$, the absolute Galois group
$\Gal(k)$ permutes $X_1, \dots, X_n$
transitively. Thus $Y:= X_1 \cap \dots \cap X_n$
is a closed $G$-invariant $k$-subvariety of $X$ (possibly empty)
and $X \ne Y$.

Let $V$ be a generically free linear $G$-representation with $k(V)^G$ infinite.
By Remark~\ref{rem.gen-free}
there exists a $G$-equivariant rational
$k$-map $f \colon V \dasharrow X \setminus Y$.
Since $V$ is geometrically irreducible, the image of $f$ is 
contained in one of the components $X_i$.
Since $\Gal(k)$ transitively permutes the components,
the image of $f$ is also contained
in $X_2, \dots, X_n$ and thus in $Y$, a contradiction.
\end{proof}

\begin{remark} \label{rem.minimal}
Suppose $X$ is of minimal dimension among generically free 
versal $G$-varieties. Then $X$ must be very versal.  
To see this, let $V$ be a generically free 
linear representation of $G$, as in Remark~\ref{rem.gf-rep}.
Let $U \subset X$ be a $G$-invariant open subvariety which is the
total space of a $G$-torsor $U \to B$.
Then $U$ is weakly versal and,
by Remark~\ref{rem.gen-free},
there exists a $G$-equivariant rational 
map $f: V \dasharrow U$.  
The closure $Z$ of the image of $f$ is very versal and, since $U$
is a $G$-torsor, the action on $Z$ is generically free.
By minimality of $\dim(X)$,
we conclude that $\dim(Z) = \dim(X)$ and thus $f$ is dominant.

The minimal value of $\dim(X) - \dim(G)$, as $X$ ranges over the very versal
(or equivalently, versal) generically free $G$-varieties,
is called the {\em essential dimension} of $G$ and is denoted by $\ed(G)$; 
see~\cite{BerhuyFavi},~\cite[Section 5]{gms} or~\cite{icm}. 
\end{remark}

\begin{remark} \label{rem.fixed-pt} 
The following criterion gives a convenient way (often the only
known way) to show that a given action is not versal.

{\em Let $X$ be a projective irreducible
weakly versal $G$-variety defined over $k$. Suppose $H \subset G$ 
is a closed $k$-subgroup such that every 
finite-dimensional $k$-representation of $H$ has a $1$-dimensional
invariant subspace. Then $X$ has an $H$-fixed $k$-point.} 

In applications, $H$ is usually taken to be a 
finite constant abelian group. When $k$ has suitable roots of unity,
every linear representation of such a group decomposes as a direct sum 
of $1$-dimensional character spaces; in particular, the above 
criterion applies. For example, 
the usual action of the alternating group $G = \Alt_5$ on $X = \bbP^1$ (say, 
over the field $k = \bbC$ of complex numbers) is not versal. Indeed, 
$G$ contains a subgroup $H$ isomorphic to 
$\bbZ/2 \bbZ \times \bbZ/2\bbZ$ which has no fixed points in
$\bbP^1$. Here one generator of $\bbZ/2 \bbZ \times \bbZ/2\bbZ$ 
takes $(x:y) \in \bbP^1$ to $(-x : y)$ and the other to $(y: x)$.
Arguments of this type are used extensively, e.g., in~\cite{duncan2},
\cite{duncan},~\cite{BeauvilleED} and \cite{tokunaga}.

To prove the criterion, let $V$ be a generically free $G$-representation.
By Remark~\ref{rem.gen-free}, there is a $G$-equivariant 
(and hence, $H$-equivariant) rational map  $V \dasharrow X$.
Since $V$ has a smooth $H$-fixed point (the origin), 
the ``Going Down Theorem'' implies that $X$ has an $H$-fixed point; 
see~\cite[Proposition A.2 and Remark A.7]{ry-ed}.
\qed
\end{remark}

\begin{remark} \label{rem.generic}
The notion of a {\em generic} group action, in the sense of
D.~J.~Saltman~\cite{saltman-generic}, 
is closely related to our notion of versality but is not 
identical to it. The action of $G$ on an irreducible 
variety $X$ is called generic if it is generically free, versal 
and $k(X)^G$ is a rational (i.e., purely transcendental) field extension 
of $k$.  Note that while versal actions exist for every algebraic group $G$,
generic $G$-actions may or may not exist, depending on $G$.
We also note that for a finite 
group $G$ the existence of
a generic action is equivalent to the existence of a generic polynomial 
in the sense of F.~DeMeyer~\cite{demeyer, jly}. The distinction
between ``versal" and ``generic" is often blurred in the literature. 
\end{remark}

\section{Twisting}
\label{sect.twisting1}

Let $G/k$ be an algebraic group, $X/k$ be a $G$-variety, 
and $T \to \Spec(k)$ be a right $G$-torsor.
The diagonal action of $G$ on $T \times X$ makes $T \times X$ into
the total space of a $G$-torsor $T \times X \to B$. The base space $B$
of this torsor is unique (it is the geometric quotient of $T \times X$ 
by $G$); it is usually called {\em the twist} of $X$ by $T$ and denoted 
by $^T X$.  This construction relies
on our standing assumption that $X$ is quasiprojective;
for details, see~\cite[Section 2]{florence2} or~\cite[Section 2]{ckpr}.

Note that there is no natural $G$-action on $^TX$; 
we lose the $G$-action in the course of constructing
$^T X$.  However, $^T X$ carries a natural action 
of the twisted group $^T G$; see 
Propositions~\ref{prop:twistedGroup} and~\ref{prop:TGaction} below.

If $T$ is split over $k$, it is easy to see that $^T X$ 
is $k$-isomorphic to $X$.  Hence, $^T X$ is 
a $k$-form of $X$, i.e., $X$ and $^T X$ become isomorphic 
over any splitting field $L/k$ of $T$. Combining this observation with
Hilbert's Theorem 90 (\cite[Proposition X.1.3]{serre-lf}), we obtain
the following.

\begin{lem} \label{lem.Hilbert90} Let $V$ be a linear representation
$G \to \GL(V)$ and $T \to \Spec(k)$ be a $G$-torsor.
Then $^T V$ is $k$-isomorphic to $V$.  In particular, $^T V(k)$ is dense 
in $^T V$.
\qed
\end{lem}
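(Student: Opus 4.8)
The plan is to reduce the statement about the twist $^T V$ to the classical vanishing of the first Galois cohomology of the general linear group, i.e.\ Hilbert's Theorem~90 in the form $H^1(k, \GL_n) = \{1\}$. First I would fix a finite Galois splitting field $L/k$ for the torsor $T$, so that $T$ is classified by a cocycle $c \colon \Gal(L/k) \to G(L)$. The representation $\rho \colon G \to \GL(V)$ then pushes this cocycle forward to a cocycle $\rho_* c \colon \Gal(L/k) \to \GL(V)(L) = \GL_n(L)$, where $n = \dim V$. The key observation is that the twisted variety $^T V$, being the affine space $V_L$ equipped with the descent datum obtained by combining the natural Galois action on $V_L$ with the cocycle $\rho_* c$ acting linearly, is precisely the $k$-form of $\mathbb{A}^n$ classified by the class of $\rho_* c$ in $H^1(\Gal(L/k), \GL_n(L))$, with its induced linear structure.

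Next I would invoke Hilbert's Theorem~90, $H^1(k, \GL_n) = H^1(\Gal(L/k), \GL_n(L)) = \{1\}$ (see~\cite[Proposition X.1.3]{serre-lf}), to conclude that the cocycle $\rho_* c$ is a coboundary: there exists $P \in \GL_n(L)$ with $\rho_*c(\gamma) = P^{-1} \cdot {}^{\gamma}P$ for all $\gamma \in \Gal(L/k)$. Conjugation by $P$ (acting linearly on $V_L = L^n$) is then an isomorphism of $L$-vector spaces that intertwines the twisted descent datum on $^T V$ with the standard descent datum on $V_L$ coming from the $k$-structure $V$. By Galois descent for quasiprojective varieties (which applies since $V$ is affine, hence quasiprojective, and the $G$-action is linear), this isomorphism descends to a $k$-isomorphism $^T V \xrightarrow{\sim} V$. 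The final clause is then immediate: since $V \cong \mathbb{A}^n_k$ has a dense set of $k$-points (the field $k$ is infinite whenever we need this, but in any case $k$-points are Zariski dense in $\mathbb{A}^n_k$ over every field), the same holds for $^T V$.

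The only genuine subtlety — and the step I would be most careful about — is matching the scheme-theoretic construction of $^T V$ given in Section~\ref{sect.twisting1} (as the geometric quotient $(T \times V)/G$) with the cocycle-theoretic description above, and checking that the resulting descent datum is indeed by \emph{linear} automorphisms of $V_L$, so that Hilbert~90 for $\GL_n$ (rather than for some larger automorphism group) is the relevant statement. This is routine once one unwinds the definitions: the torsor $T \times V \to {}^T V$ restricted over a point, pulled back to $L$, trivializes, and the transition functions are exactly $\rho \circ c$; but it is the one place where one must be explicit rather than cite a black box. Everything else is a direct appeal to Hilbert~90 and Galois descent.
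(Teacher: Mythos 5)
Your proposal is correct and takes essentially the same route as the paper: the paper observes just before the lemma that ${}^TX$ is always a $k$-form of $X$, and then records the lemma with a $\qed$ as an immediate consequence of Hilbert's Theorem~90 ($H^1(k,\GL_n)=1$), exactly because the cocycle defining ${}^TV$ factors through $\GL(V)$. You have simply unwound the descent-datum/cocycle bookkeeping that the authors leave implicit, including the point you correctly flag as the only genuine subtlety — that linearity of the $G$-action forces the transition functions into $\GL_n$ rather than the full automorphism group of affine space, so that Hilbert~90 for $\GL_n$ really is the relevant statement.
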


It is well-known that quasiprojectivity is a geometric property,
in the sense that if a $k$-variety is quasiprojective over $\bar{k}$ 
then it is quasiprojective over $k$. Thus, twisting is performed entirely  
within the category of quasiprojective varieties.

Twisting is functorial in the following sense: a $G$-equivariant 
morphism $f \colon X \to Y$
(respectively, a rational map $f \colon X \dasharrow Y$) 
of $G$-varieties gives rise to a $K$-morphism 
$^T f \colon {}^T X \to {}^T Y$ (respectively, a $K$-rational map 
$^T f \colon {}^T X \dasharrow {}^T Y$). For details, 
see~\cite[Lemma 2.2]{florence2} (where only rational maps 
are considered, but the construction of $^T f$ is even more 
straightforward if $f$ is regular).

The following Proposition amplifies~\cite[Lemma 3.4]{ckpr}.

\begin{prop} \label{prop:twistAdjunction}
Let $k$ be a field, $G$ be an algebraic group
and $T \to \Spec(k)$ be a $G$-torsor.
Denote by $\Var$ the category of $k$-varieties and
by $G\gVar$ the category of $k$-varieties with a $G$-action. 
Morphisms in the latter category are $G$-equivariant $k$-maps.

Let $\calL_T : \Var \to G\gVar$ be the functor $T \times -$ which 
takes a $k$-variety $Y$ to $T \times Y$ (viewed as a $G$-variety,
with $G$ acting trivially on $Y$). 
Let $\calR_T : G\gVar \to \Var$ be the twisting functor $(^T -)$ 
described above.

Then the functors $(\calL_T,\calR_T)$ form an adjoint pair.  In other 
words, for any $Y \in \Var$ and $X \in G\text{-}\Var$, we have 
an isomorphism
\begin{equation}
\Hom_{G\gVar}(T \times Y, X) \simeq \Hom_{\Var}(Y ,{}^T X)
\end{equation}
which is functorial in both $X$ and $Y$.
\end{prop}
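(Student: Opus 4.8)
The plan is to construct the natural bijection explicitly and then check functoriality. Recall that the twist $^T X$ was defined as the base of the $G$-torsor $q_X\colon T\times X\to {}^T X$ obtained from the diagonal action. The key structural fact I will use is the following descent-type observation: for any $k$-variety $Z$, giving a $k$-morphism $^T X\to Z$ is the same as giving a $G$-invariant $k$-morphism $T\times X\to Z$, since $^T X$ is a geometric quotient (indeed a torsor quotient, so in particular a categorical quotient) of $T\times X$ by $G$. Symmetrically, the functor $\calL_T$ is built from the trivial torsor-like projection $T\times Y\to Y$ ($G$ acting only on the first factor), so a $G$-equivariant map $T\times Y\to X$ should correspond to descent data along this projection.

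First I would pin down the unit and counit of the adjunction. The counit $\varepsilon_X\colon \calL_T\calR_T X = T\times {}^T X\to X$ should be a $G$-equivariant map; I would produce it as follows. Twisting is functorial and involutive up to canonical isomorphism when we twist back by $T$ — more precisely, $^T(T\times Y)$ is canonically $T$-torsor-trivialized, but what I really want is a map the other way. Concretely, consider the $G$-variety $T\times X$; its twist $^T(T\times X)$ contains, via the diagonal embedding $T\hookrightarrow T\times T$, a copy structure that lets one map down to $X$. A cleaner route: the counit $\varepsilon_X$ is just the canonical $G$-equivariant morphism $T\times{}^TX\to X$ that exists because $T\times X\to{}^TX$ is a $G$-torsor and $T\times X\to X$ (second projection) is $G$-equivariant, so it descends to... no — second projection is $G$-equivariant but target $X$ has nontrivial $G$-action, so it does \emph{not} descend. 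Instead I will use that $T\times{}^TX$ is canonically isomorphic to $T\times X$ as a $G$-variety (both being $^T({}^TX\times\text{pt})$-type twists back), via the torsor structure; under this isomorphism $\varepsilon_X$ becomes the second projection $T\times X\to X$. Dually, the unit $\eta_Y\colon Y\to{}^T(T\times Y)$ comes from applying the twisting functor $\calR_T$ to the trivial torsor and using that $^T(T\times Y)\cong Y$ canonically since $T\times(T\times Y)\cong G\times(T\times Y)$ over $Y$.

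Having these, the bijection on $\Hom$-sets is the standard one: send $f\colon T\times Y\to X$ in $G\gVar$ to ${}^Tf\circ\eta_Y\colon Y\to{}^T(T\times Y)\to{}^TX$, and conversely send $g\colon Y\to{}^TX$ to $\varepsilon_X\circ\calL_T(g) = \varepsilon_X\circ(T\times g)\colon T\times Y\to T\times{}^TX\to X$. The triangle identities $\varepsilon_{\calL_T Y}\circ\calL_T(\eta_Y)=\id$ and $\calR_T(\varepsilon_X)\circ\eta_{\calR_T X}=\id$ then reduce, under the canonical trivializations above, to the elementary identity that the composite $T\times Y\to T\times{}^T(T\times Y)\cong T\times Y$ is the identity, which one checks by hand. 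Functoriality in $X$ and $Y$ is immediate from functoriality of twisting (cited from \cite[Lemma 2.2]{florence2}) and of the product functor. I would also remark that, because twisting and the product functor both preserve open immersions and dominant maps, the same adjunction restricts to the categories with rational maps as morphisms, which is the version actually needed later.

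The main obstacle I expect is bookkeeping the canonical isomorphism $T\times{}^TX\cong T\times X$ as $G$-varieties and its compatibility with twisting a second time — i.e., making precise the sense in which twisting by $T$ is "its own inverse" after base-changing to the total space of $T$. This is where one must be careful that all the identifications are \emph{canonical} (not just existent), since otherwise the triangle identities will not literally hold. Everything else — the construction of unit and counit, the two triangle identities, and bifunctoriality — is then a routine diagram chase once that canonical isomorphism is fixed, and much of it can be cited in the special case already recorded in \cite[Lemma 3.4]{ckpr}, of which this proposition is the stated amplification.
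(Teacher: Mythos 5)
Your route — building the unit and counit of the adjunction and then verifying the triangle identities — is genuinely different from the paper's, which instead introduces an intermediate set $\calF(Y,X)$ of $G$-equivariant morphisms $\gamma\colon T\times Y\to T\times X$ lying over $T$, and shows it is simultaneously in bijection with each of the two $\Hom$-sets. Your counit is essentially right: the map $T\times X\to T\times{}^TX$, $(t,x)\mapsto(t,q(t,x))$, is a morphism of $G$-torsors over $^TX$, hence an isomorphism, and composing its inverse with the second projection $T\times X\to X$ gives a $G$-equivariant $\varepsilon_X$.

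However, the unit is misidentified, and this is a real error rather than a bookkeeping gap. You claim $^T(T\times Y)\cong Y$ canonically, but this is false. Indeed $^T(T\times Y)$ is the quotient of $T\times T\times Y$ by the diagonal $G$-action; using the $G$-equivariant identification $T\times T\cong T\times G$ (sending $(t_0,t)$ to $(t_0,h)$ with $t=t_0h$), the action becomes $(t_0,h,y)\cdot g=(t_0g,g^{-1}hg,y)$, so the quotient is $^TG\times Y$, where $^TG$ is the twist of $G$ by the conjugation action (Proposition~\ref{prop:twistedGroup}). This is already visible on dimensions when $\dim G>0$, and for finite $G$ it has $|G|\cdot|Y(\bar k)|$ geometric points, not $|Y(\bar k)|$. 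The actual unit is the closed immersion $\eta_Y\colon Y\to{}^TG\times Y$, $y\mapsto(e,y)$, where $e$ is the identity section of the twisted group $^TG$ — in particular it is not an isomorphism, so the claimed reduction of the triangle identities ``under the canonical trivializations'' collapses. (Your instinct that $T\times(T\times Y)$ trivializes to $G\times(T\times Y)$ is correct, but you then quotient by the wrong $G$-action; the residual conjugation action is exactly what produces $^TG$.) With the unit corrected, the triangle identities do hold and can be checked by the torsor computations you sketch, but this step requires more than you indicate — in particular it uses that $^TG$ is a $k$-group with a distinguished identity point. Note also that the paper's proof is not trivial on this point either: after pulling back the torsor $T\times X\to{}^TX$ along $\beta\colon Y\to{}^TX$ and identifying it with $T\times Y$, one must still normalize by a unique element of $G$ to make the resulting map lie over $T$; that normalization plays the role your identity section of $^TG$ would play.
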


\begin{proof}
The isomorphism is easiest to see by considering the intermediate set
$\calF(Y,X)$ consisting of
$G$-equivariant morphisms $\gamma : T \times Y \to T \times X$
such that the following diagram commutes:
\[ \xymatrix{
T \times Y \ar[r]^{\gamma} \ar[rd]_{\pr_1}&
T \times X \ar[d]^{\pr_1} \\
 & T} \]
where the vertical maps are projections.

Given $\gamma \in \calF(Y,X)$, we obtain $\alpha = \pr_2 \circ \gamma$
in $\Hom_{G\gVar}(T \times Y, X)$.
Mapping $\alpha \in \Hom_{G\gVar}(T \times Y, X)$
to $\gamma = \pr_1 \times \alpha$ is an inverse.
Given $\gamma \in \calF(Y,X)$, we obtain
$\beta \in \Hom_{\Var}(Y ,{}^T X)$ by taking quotients.
All of these operations are clearly functorial.

It remains to reconstruct
$\gamma \in \calF(Y,X)$ given $\beta: Y \to {}^T X$.
Pulling back the $G$-torsor
$T \times X \to {}^T X$ we obtain a $G$-torsor $\pi : Y' \to Y$:
\[ \xymatrix{  Y' \ar[d]_{\text{ \tiny   $G$-torsor}} 
\ar[r]^{f} & \; T \times X \ar[d]^{\text{ \tiny   $G$-torsor}} \\
               Y \ar[r] \ar[r]^{\beta}     &  ^T X . } \]
The $G$-equivariant map $\phi = (\pr_1 \circ f) \times \pi$
is a morphism of $G$-torsors $\phi: Y' \to T \times Y$ over $Y$.
By a standard result on torsors, this means $\phi$ is an isomorphism.

Thus, we have a $G$-equivariant morphism
$\gamma' = f \circ \phi^{-1} : T \times Y \to T \times X$ which lifts $\beta$.
However, since pullbacks are only defined up to isomorphism,
the projections $T \times Y \to T$ and $T \times X \to T$ do not
necessarily commute with $\gamma'$.  Nevertheless, there exists a unique $g
\in G$ such that $g \circ \gamma'$ is in $\calF(Y,X)$.
This is the desired $\gamma$.  The construction is easily seen to be
functorial.
\end{proof}

\begin{cor} \label{cor:adjunctionSplitPoints}
Let $X$ be a $G$-variety, and $T \to \Spec(k)$ be a $G$-torsor. Let $L/k$
be a splitting field of $T$, let $s$ be a point in $T(L)$, and let $t_s :
({}^TX)_L \to X_L$ be the $L$-isomorphism such that
$s \times t_s$ is a section of 
$\pi_L : T_L \times X_L \to ({}^TX)_L$. 
If $\alpha \colon T \to X$ and $\beta \colon
\Spec(k) \to {}^TX$ are corresponding maps under the adjunction of
Proposition~\ref{prop:twistAdjunction} {\rm (}with $Y = \Spec(k)${\rm )}
then $\alpha_L(s) = t_s(\beta_L)$ in $X(L)$.
\end{cor}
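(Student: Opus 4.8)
The plan is to unwind both sides of the adjunction of Proposition~\ref{prop:twistAdjunction} explicitly, using the proof of that proposition as a guide, and then to trace a single point $s \in T(L)$ through the resulting diagram after base change to $L$. First I would recall, from the proof of Proposition~\ref{prop:twistAdjunction} with $Y = \Spec(k)$, that the correspondence $\alpha \leftrightarrow \beta$ factors through the intermediate set $\calF(\Spec(k), X)$ of $G$-equivariant morphisms $\gamma \colon T \to T \times X$ with $\pr_1 \circ \gamma = \id_T$: one has $\alpha = \pr_2 \circ \gamma$, and $\beta$ is obtained from $\gamma$ by passing to the quotient under $G$, i.e.\ $\beta = q \circ \gamma$ where $q \colon T \times X \to {}^T X$ is the quotient map (which I will henceforth call $\pi$, consistent with the statement). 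Equivalently, $\gamma = \pr_1 \times \alpha$. All of these constructions commute with the flat base change $-\times_k L$, so after base changing to $L$ we still have $\alpha_L = \pr_2 \circ \gamma_L$ and $\beta_L = \pi_L \circ \gamma_L$.

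Next I would use the defining property of $t_s$. By hypothesis, $s \times t_s \colon ({}^TX)_L \to T_L \times X_L$ is a section of $\pi_L$, meaning $\pi_L \circ (s \times t_s) = \id_{({}^TX)_L}$; here $s$ denotes the constant map $({}^TX)_L \to T_L$ with value $s$, and $t_s \colon ({}^TX)_L \to X_L$ is the claimed $L$-isomorphism. Now evaluate everything at the $L$-point $\beta_L \in {}^TX(L)$. On one hand, $(s \times t_s)(\beta_L) = (s, t_s(\beta_L)) \in (T \times X)(L)$, and applying $\pi_L$ gives back $\beta_L$. On the other hand, consider the $L$-point $\gamma_L(s) \in (T \times X)(L)$, where now $s \in T(L)$ is regarded as an honest $L$-point of $T$: since $\pr_1 \circ \gamma_L = \id$, its first coordinate is $s$, its second coordinate is $(\pr_2 \circ \gamma_L)(s) = \alpha_L(s)$, and applying $\pi_L$ gives $(\pi_L \circ \gamma_L)(s) = \beta_L(s)$. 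But $\beta_L$ is a map from $\Spec(L)$, so $\beta_L(s)$ is just the point $\beta_L$ itself. Hence $\pi_L(\gamma_L(s)) = \beta_L = \pi_L(s, t_s(\beta_L))$.

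The final step is to conclude $\alpha_L(s) = t_s(\beta_L)$ from the equality $\pi_L(s, \alpha_L(s)) = \pi_L(s, t_s(\beta_L))$ in ${}^TX(L)$. This follows because $\pi_L \colon T_L \times X_L \to ({}^TX)_L$ is a $G$-torsor: two $L$-points of $T_L \times X_L$ with the same image under $\pi_L$ differ by a unique element of $G(L)$ acting diagonally, and since the two points $(s, \alpha_L(s))$ and $(s, t_s(\beta_L))$ already share the same first coordinate $s$, the group element fixing $s$ must be the identity (the $G$-action on $T_L$ is free), so the second coordinates agree. I expect the only real subtlety to be bookkeeping about the two roles played by the symbol $s$ — as a constant map out of $({}^TX)_L$ in the definition of $t_s$, and as an $L$-point of $T$ when it is fed into $\gamma_L$ — and making sure the "differ by a unique $g \in G(L)$" argument is applied to genuine $L$-points, which is legitimate since $\pi_L$ is a torsor; there are no geometric obstructions beyond that.
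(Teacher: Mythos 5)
Your proof is correct and follows essentially the same route as the paper's: both set up the commutative square relating $\alpha$ and $\beta$ over $k$, base-change to $L$, and trace the point $s$ through the augmented diagram with sections $s$ and $s \times t_s$. The only difference is that where the paper simply asserts that the augmented diagram commutes ``by the definition of $t_s$,'' you spell out the underlying reason: the two candidate $L$-points of $T_L \times X_L$ both map to $\beta_L$ under $\pi_L$ and share the same first coordinate $s$, so by the torsor property (and freeness of the $G$-action on $T_L$) they coincide. This is a welcome elaboration of the step the paper leaves implicit, not a genuinely different argument.
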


\begin{proof}
By definition, $\alpha$ and $\beta$ fit into the following 
commutative diagram of $G$-equivariant $k$-morphisms:
\[ \xymatrix{ T \ar@{->}[r]^{\id \times \alpha} \ar@{->}[d]  
& T \times X \ar@{->}[d] \\   
\Spec(k)  \ar@{->}[r]^{\beta}  
& ^T X \; . } \]   
The vertical maps are $G$-torsors; we split them by base-changing
from $k$ to $L$.
By the definition of $t_s$ the resulting diagram 
\[ \xymatrix{ T_L \ar@{->}[r]^{\id \times \alpha_L} \ar@{->}[d]  
& T_L \times X_L \ar@{->}[d] \\   
\Spec(L)  \ar@{->}[r]^{\beta_L}  \ar@{->}@/^1.5pc/[u]^s 
& (^T X)_L \ar@{->}@/_1.5pc/[u]_{s \times t_s} \; . } \]   
is commutative.
Tracing from the lower left corner to the upper right, we see that
$s \times \alpha_L(s) = s \times t_s (\beta_L)$ as morphisms
$\Spec(L) \to T_L \times X_L$. Composing these morphisms with the projection 
$T_L \times X_L \to X_L$, we see that
$\alpha_L(s) = t_s(\beta_L)$ as maps $\Spec(L) \to X$.
\end{proof}

\begin{cor}\label{cor:twistsPreserve}
Let $X$ and $Y$ be $G$-varieties defined over $k$, and let $T \to \Spec(k)$
be a $G$-torsor.  Then 
\begin{enumerate}
\item ${}^T (X \times Y)$ is canonically isomorphic to ${}^T X \times {}^T Y$.
\label{cor:twistsPreserve:prod}

\item Let $f \colon X \to Y$ be a $G$-equivariant closed 
(resp. open) immersion.  Then $^T f \colon {}^T X \to {}^T Y$
is also a closed (resp. open) immersion.
\label{cor:twistsPreserve:top}

\item If $f \colon X \dasharrow Y$ is a $G$-equivariant dominant  
rational map then the induced rational map 
$^T f \colon {}^T X \dasharrow {}^T Y$ is also dominant.
\label{cor:twistsPreserve:dominant}
\end{enumerate}
\end{cor}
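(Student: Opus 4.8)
The plan is to derive all three parts from the adjunction of Proposition~\ref{prop:twistAdjunction} together with faithfully flat descent along a splitting field $L/k$ of $T$, exploiting that twisting commutes with extension of scalars and becomes trivial over $L$.

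For~(a), I would observe that $\calR_T = {}^T(-)$, being a right adjoint by Proposition~\ref{prop:twistAdjunction}, preserves all limits, in particular finite products. In $G\gVar$ the categorical product of $X$ and $Y$ is $X \times Y$ with the diagonal $G$-action (a $G$-equivariant morphism into $X \times Y$ is the same datum as a pair of $G$-equivariant morphisms, one into $X$ and one into $Y$), while in $\Var$ it is the fibre product over $k$; applying $\calR_T$ then yields the canonical isomorphism ${}^T(X \times Y) \cong {}^TX \times {}^TY$. As a sanity check, $\calR_T(\Spec(k)) = \Spec(k)$, so $\calR_T$ is compatible with the empty product as well.

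For~(b) and~(c) I would use two standard properties of twisting. First, it commutes with extension of scalars: for any field extension $k'/k$ there is an isomorphism $({}^TX)_{k'} \cong {}^{(T_{k'})}(X_{k'})$, natural in the $G$-variety $X$ and hence compatible with $G$-equivariant maps. Second, if $L/k$ splits $T$ then $T_L$ is trivial and ${}^{(T_L)}(X_L) \cong X_L$, again naturally in $X_L$. Together these give isomorphisms $({}^TX)_L \cong X_L$ and $({}^TY)_L \cong Y_L$ identifying $({}^Tf)_L$ with $f_L$. Now for~(b): if $f$ is a closed (resp. open) immersion then so is $f_L$, hence so is $({}^Tf)_L$; since $\Spec(L) \to \Spec(k)$ is faithfully flat and quasi-compact and both closed immersions and open immersions descend along such morphisms, ${}^Tf$ is a closed (resp. open) immersion. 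For~(c): choose a $G$-invariant dense open $U \subseteq X$ on which $f$ is represented by a $G$-equivariant morphism $f_0 \colon U \to Y$. By~(b), ${}^TU \hookrightarrow {}^TX$ is an open immersion, and it is dense, since its complement base-changes over $L$ to the proper closed subscheme $(X \setminus U)_L$ of $X_L$ and being a proper closed subscheme descends along $\Spec(L) \to \Spec(k)$; thus ${}^Tf_0 \colon {}^TU \to {}^TY$ represents ${}^Tf$. Because $Y$ is reduced and $f_0$ is dominant, the scheme-theoretic image of $f_0$ is all of $Y$, and the scheme-theoretic image of a quasi-compact morphism commutes with flat base change to $L$; hence $({}^Tf_0)_L$, which corresponds to $(f_0)_L$, is dominant. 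Finally, if the closure of the image of ${}^Tf_0$ were a proper closed subvariety $Z \subsetneq {}^TY$, then $({}^Tf_0)_L$ would factor through $Z_L \subsetneq ({}^TY)_L$, a contradiction; so ${}^Tf_0$, and therefore ${}^Tf$, is dominant.

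I expect the main obstacle to be not any individual step — stability and descent of closed and open immersions, compatibility of scheme-theoretic image with flat base change, and naturality of twisting in the $G$-variety are all standard — but rather the bookkeeping required to make precise that twisting commutes with base change to $L$ in a manner that carries $({}^Tf)_L$ to $f_L$. This is the functorial refinement of the isomorphism $t_s$ of Corollary~\ref{cor:adjunctionSplitPoints}, and it is exactly what licenses the reduction to the split case; once it is set up, parts~(b) and~(c) follow immediately, while part~(a) is already handled by the adjunction.
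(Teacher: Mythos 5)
Your proposal is correct and follows essentially the same route as the paper: part (a) is derived from preservation of limits by the right adjoint $\calR_T$ of Proposition~\ref{prop:twistAdjunction}, and parts (b) and (c) are reduced to the split case over a splitting field $L/k$. The paper compresses (b) and (c) by citing~\cite[Proposition 2.7.1]{EGA4} for the geometricity of ``closed/open immersion'' and ``dominant,'' whereas you spell out the fppf-descent in both directions and explicitly handle the rational-map bookkeeping in (c) by choosing a $G$-invariant dense open $U$ and checking that ${}^T U$ remains dense --- all sound, just more verbose than the paper's appeal to the EGA reference.
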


\begin{proof}
\ref{cor:twistsPreserve:prod} follows from the fact that 
the twisting functor is a right adjoint and,
thus, is left exact. 
To prove
\ref{cor:twistsPreserve:top} and
\ref{cor:twistsPreserve:dominant} note
that by \cite[Proposition 2.7.1]{EGA4}
the properties of being a closed or open immersion 
and of being dominant 
are geometric.  In other words, for the
purpose of checking that $^T f$ has these properties, we may pass 
to any field extension $L/k$. 
In particular, we may replace $k$ by a splitting field of $T$
and thus assume without loss of generality that $T \to \Spec(k)$ is split.
In this case $^T X$, $^T Y$ and $^T f$
become $X$, $Y$, and $f$, respectively,
and the assertions of parts \ref{cor:twistsPreserve:top} and
\ref{cor:twistsPreserve:dominant} become obvious.
\end{proof}

\begin{prop} {\rm (}cf.~\cite[Lemma 3.5]{ckpr}{\rm )}
\label{prop:twistedGroup}
Suppose $H$ and $G$ are algebraic groups over $k$,
and $G$ acts on $H$ by group automorphisms.  
Let $T \to \Spec(k)$ be a $G$-torsor.  
Then $^T H$ is a $k$-form of the algebraic group
$H$. In particular, 
$^T H$ is an affine algebraic group over $k$.
\end{prop}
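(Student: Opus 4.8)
The plan is to verify that $^T H$ is a $k$-form of $H$, since every affine algebraic group over $k$ is in particular a quasiprojective $k$-variety, and the twisting construction of Section~\ref{sect.twisting1} applies to the $G$-variety $H$ with $G$ acting via the given homomorphism $G \to \Aut(H)$. Once we know $^T H$ is a $k$-form of the variety $H$, we will equip it with a group structure and check that this makes it a $k$-form of $H$ \emph{as an algebraic group}. Since $H$ is affine and a $k$-form of an affine variety is affine (affineness is a geometric property, by the same sort of descent argument used in Corollary~\ref{cor:twistsPreserve}), the last assertion will follow immediately.

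First I would recall that the multiplication $m \colon H \times H \to H$ and inversion $i \colon H \to H$ are $G$-equivariant morphisms of $G$-varieties: this is exactly the statement that $G$ acts on $H$ by group automorphisms. Indeed, $g\cdot(h_1 h_2) = (g\cdot h_1)(g\cdot h_2)$ says $m$ is $G$-equivariant for the diagonal action on the source, and similarly for $i$; the identity section $e\colon \Spec(k)\to H$ is $G$-fixed, hence $G$-equivariant when $\Spec(k)$ carries the trivial action. By functoriality of twisting (and using Corollary~\ref{cor:twistsPreserve}\ref{cor:twistsPreserve:prod}, which identifies $^T(H\times H)$ with $^T H \times {}^T H$), we obtain $k$-morphisms
\[
{}^T m \colon {}^T H \times {}^T H \to {}^T H, \qquad {}^T i \colon {}^T H \to {}^T H, \qquad {}^T e \colon \Spec(k) \to {}^T H .
\]
Next I would check the group axioms — associativity, the identity law, and the inverse law. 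Each of these is an equality of $G$-equivariant morphisms built out of $m$, $i$, $e$ and projections; applying the twisting functor, which is a functor and commutes with products in the relevant sense, the corresponding equality of the twisted morphisms follows formally. (Alternatively, and perhaps more cleanly, one checks these identities after base change to a splitting field $L/k$ of $T$, where $^T H$ becomes $H_L$ and $^T m$, $^T i$, $^T e$ become $m_L$, $i_L$, $e_L$; since a morphism of $k$-schemes is determined by its base change to $L$, and the axioms hold for $H$, they hold for $^T H$.) This shows $^T H$ is a group scheme over $k$, and it is a $k$-form of the group scheme $H$ because over the splitting field $L$ it is isomorphic to $H_L$ as a group scheme.

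The main obstacle — really the only point requiring care — is bookkeeping: making sure the canonical isomorphism $^T(H\times H)\cong {}^T H\times {}^T H$ of Corollary~\ref{cor:twistsPreserve}\ref{cor:twistsPreserve:prod} is compatible with the various projection and diagonal maps so that the diagrams encoding the group axioms really do twist to the corresponding diagrams for $^T H$. This is routine but must be stated carefully; invoking the adjunction of Proposition~\ref{prop:twistAdjunction} (which is what makes twisting a right adjoint, hence product-preserving and compatible with these structure maps) handles it cleanly. Finally, since $H$ is affine and $({}^T H)_{\bar k}\cong H_{\bar k}$, and affineness descends along the faithfully flat extension $\bar k/k$ (equivalently, is a geometric property in the sense of \cite[Proposition 2.7.1]{EGA4}), $^T H$ is an affine algebraic group over $k$, as claimed. $\qed$
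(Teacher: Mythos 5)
Your proof is correct and follows essentially the same approach as the paper: observe that the group-structure morphisms of $H$ (multiplication, inversion, identity) are $G$-equivariant precisely because $G$ acts by group automorphisms, apply the twisting functor together with Corollary~\ref{cor:twistsPreserve}(a), check that the commutative diagrams encoding the group axioms twist to the corresponding diagrams for $^T H$, and deduce affineness by descent via \cite[Proposition~2.7.1]{EGA4}. You supply somewhat more detail than the paper (including the useful alternative of verifying the axioms after base change to a splitting field), but the underlying argument is the same.
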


\begin{proof}
The commutative diagrams defining 
the group scheme structure on
$H$ are all $G$-equivariant.
Applying the twisting functor to these diagrams, and 
using Corollary \ref{cor:twistsPreserve}(a),
we see that $^T H$ is an algebraic group.
If $L/k$ is a splitting field of $T$ then clearly
$(^T H)_L$ and $H_L$ are isomorphic.
The assertion
that $H$ is affine follows by descent; 
see~\cite[Proposition~2.7.1(xiii)]{EGA4}.
\end{proof}

\begin{prop} \label{prop:TGaction}
Let $T \to \Spec(k)$ be a $G$-torsor and let ${}^T G$ 
denote the twist by $T$ of the conjugation action of $G$ on itself.  
For every $G$-variety $X$, the $G$-action on $X$ induces 
a ${}^T G$-action on ${}^T X$.  
Moreover, for every $G$-equivariant morphism $f$ 
the morphism $^T f$ is $^T G$ equivariant.  In other words, 
the twisting functor factors through the category of ${}^TG$-varieties.
\end{prop}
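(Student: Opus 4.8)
The plan is to deduce the Proposition formally from the adjunction of Proposition~\ref{prop:twistAdjunction}. The category $G\gVar$ has finite products --- the fibre product over $k$ with the diagonal $G$-action --- and $\Spec(k)$ equipped with the trivial $G$-action is a terminal object in it. Since $\calR_T = ({}^T-)$ is a right adjoint, it preserves all limits that exist in $G\gVar$; in particular it preserves finite products (this is also the content of Corollary~\ref{cor:twistsPreserve}\ref{cor:twistsPreserve:prod}) and sends the terminal object $\Spec(k)$ to the terminal object $\Spec(k)$ of $\Var$. The general fact I would then invoke is purely categorical: any functor preserving finite products and the terminal object carries group objects to group objects, and an object equipped with an action of a group object to an object equipped with an action of the image group object, compatibly with morphisms of such.

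Next I would verify that the conjugation action makes $G$ a group object in $G\gVar$. Writing $m\colon G\times G\to G$, $i\colon G\to G$, $e\colon\Spec(k)\to G$ for multiplication, inversion and the identity, a direct computation shows each is $G$-equivariant when $G$ acts by simultaneous conjugation on all $G$-factors and trivially on $\Spec(k)$; for instance $m(ghg^{-1},gh'g^{-1})=g\,m(h,h')\,g^{-1}$, and similarly for $i$ and $e$. The same computation shows that for a $G$-variety $X$ the action map $\sigma\colon G\times X\to X$ is $G$-equivariant when $G$ acts on the source by conjugation on the $G$-factor and by the given action on $X$, since $\sigma(ghg^{-1},gx)=ghx=g\,\sigma(h,x)$; and the associativity and unit axioms relating $\sigma$, $m$, $e$ hold in $G\gVar$ because they are commutative diagrams of $k$-schemes all of whose arrows are $G$-equivariant. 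Thus $(G,\text{conjugation})$ is a group object in $G\gVar$ acting on the object $X$, and a $G$-equivariant morphism $f\colon X\to Y$ is a morphism of objects-with-action.

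Applying $\calR_T$ then yields everything at once: $\calR_T(G,\text{conjugation})={}^T G$ is a group object in $\Var$ (recovering Proposition~\ref{prop:twistedGroup}); the twisted action map ${}^T\sigma\colon {}^T G\times {}^T X\to {}^T X$, obtained from $\sigma$ through the canonical isomorphism ${}^T(G\times X)\cong {}^T G\times {}^T X$, satisfies the action axioms because these are diagrammatic; and ${}^T f$ is ${}^T G$-equivariant for every $G$-equivariant $f$. For a $G$-equivariant rational map one restricts to a dense $G$-invariant open subvariety on which it is regular, applies the regular case, and uses that twisting sends a dense open immersion to a dense open immersion (Corollary~\ref{cor:twistsPreserve}\ref{cor:twistsPreserve:top}). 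It follows that $\calR_T$ factors as $G\gVar\to {}^T G\gVar$ followed by the forgetful functor, which is the final assertion. The only steps requiring care are the (routine) equivariance checks for $m$, $i$, $e$, $\sigma$ with respect to conjugation and the bookkeeping of left- versus right-actions and torsors fixed in Section~\ref{sect.twisting1}; I do not expect a genuine obstacle.
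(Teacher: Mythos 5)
Your proof is correct and uses the same underlying mechanism as the paper's: twist the diagrams encoding the group law on $G$ and the action map $G\times X\to X$, all of which are $G$-equivariant for conjugation, via the product-preserving twisting functor (Corollary~\ref{cor:twistsPreserve}\ref{cor:twistsPreserve:prod}). The paper states this tersely by reference to the proof of Proposition~\ref{prop:twistedGroup}; your version makes explicit both the categorical framing (right adjoints preserve finite products and the terminal object, hence carry group objects and their actions to the same) and the routine equivariance checks for $m$, $i$, $e$, $\sigma$, but the route is the same.
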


\begin{proof}
The action map $G \times X \to X$ and associated commutative 
diagrams are all $G$-equivariant. 
As in the proof of Proposition~\ref{prop:twistedGroup}, we obtain an
action map $^T G \times {}^T X \to {}^T X$ and commutative
diagrams which show that $^T f$ is $^T G$ equivariant.  
\end{proof}

\section{Proof of Theorem~\ref{thm1}(a) and (b)}
\label{sect.proof-of-thm1ab}

We will use repeatedly the fact that twisting commutes with base 
field extension. Given a $k$-variety $X$, a field extension $K/k$, 
and a $G$-torsor $T \to \Spec(K)$, we will use the shorthand 
notation ${}^T X$ to denote ${}^T (X_K)$.

For brevity, we use the following terminology throughout the paper.

\begin{defn} \label{def:twistNotation}
Let $k$ be a field and $G$ be an algebraic group over $k$.
By a {\em $G$-twisting
pair} $(T, K)$ we shall mean a choice of a field extension $K/k$, with $K$
infinite, and a $G$-torsor $T \to \Spec(K)$. In situations where the choice of
$G$ is clear from the context and there is no risk of ambiguity, we will
simply refer to $(T, K)$ as a \emph{twisting pair}.
\end{defn}

\begin{proof}[Proof of Theorem~\ref{thm1}\ref{thm1.a}]
Let $(T, K)$ be any twisting pair.
Setting $Y = \Spec(K)$ in Proposition~\ref{prop:twistAdjunction}, 
we see that the $K$-points of $^T X$
are in a natural $1-1$ correspondence with $G$-equivariant maps $T \to X$.
\end{proof}

The proof of Theorem~\ref{thm1}\ref{thm1.b} is considerably more 
delicate. Before we proceed with the details, we would like 
to explain a new obstacle our argument will have to overcome.

Given a $G$-action on an irreducible $X$, and a $G$-twisting pair $(T, K)$,
let us say that $X$ is $(T, K)$-weakly versal if there exists
a morphism $T \to X$ defined over $k$. The $G$-action on $X$ is, 
by definition, weakly versal if it is $(T, K)$-weakly versal for
every twisting pair $(T, K)$.  Note that our proof of
Theorem~\ref{thm1}\ref{thm1.a} establishes the following stronger statement:
 
\smallskip
{\em Choose a $G$-twisting pair $(T, K)$. Then an irreducible $G$-variety $X$ 
is $(T, K)$-weakly versal if and only if $^T X$ has a $K$-point.}

\smallskip
Similarly, given a $G$-twisting pair $(T, K)$, we will say that an
irreducible $G$-variety $X$ is $(T, K)$-versal
if every dense $G$-invariant open
subvariety of $X$ is $(T, K)$-weakly versal. One is thus naturally led to try
to prove Theorem~\ref{thm1}\ref{thm1.b} by showing that for any given
$G$-twisting pair $(T, K)$, $X$ is $(T, K)$-versal if and only if $K$-points
are dense in $^T X$. The following example shows that this stronger version
of Theorem~\ref{thm1}(b) fails.

\begin{example} \label{ex.motivation} 
Let $k = \mathbb{C}$ and let $X$ be a smooth irreducible 
projective complex curve of genus $g \geqslant 2$, whose 
automorphism group $G := \Aut(X)$ is non-trivial.  
By Hurwitz's theorem, $G$ is finite.

Let $\pi \colon X \to X/G$ be the quotient map, 
let $K := k(X)^G = k(X/G)$, and let $T \to \Spec(K)$ be 
the $G$-torsor obtained by pulling back $\pi$
via the generic point $\Spec(K) \to X/G$. Then
the $G$-action on $X$ is $(T, K)$-versal, since
the identity map $X \to X$ restricts to 
a $G$-equivariant morphism $T \to U$ for 
any $G$-invariant open subvariety $U \subset X$.

On the other hand, we claim that the $K$-curve $^T X$ has 
only finitely many $K$-points, and hence,
$K$-points cannot be dense in $^T X$. Indeed,
arguing as in the proof of Theorem~\ref{thm1}\ref{thm1.a},
we see that $K$-points of $^T X$ are in a natural 
bijective correspondence with $G$-equivariant 
$k$-morphisms $T \to X$ or equivalently,
with $G$-equivariant rational maps $X \dasharrow X$,
or equivalently (since $X$ is a smooth complete curve)
with $G$-equivariant morphisms $X \to X$.
The latter can be of two types: (i) dominant
and (ii) constant (i.e., the image is a single point of $X$).
It thus suffices to show that there are only finitely many 
morphisms $X \to X$ of each type.

(i) Since $g \geqslant 2$, the Hurwitz formula tells us that any dominant 
morphism $X \to X$ is, in fact, an automorphism of $X$. As we mentioned above,
$X$ has only finitely many automorphisms. 

(ii) If the image of $f$ is a point of $X$, 
this point has to be fixed by $G$, and $X$ has only
finitely many $G$-fixed points. This completes the proof of the claim.
\qed
\end{example}

The above example demonstrates that, given a twisting pair $(T, K)$,
we cannot hope to deduce the density of $K$-points in $^T X$ merely from 
the fact that $X$ is $(T, K)$-versal.
When $X$ is versal, we will deduce 
the density of $K$-points in $^T X$, for every 
twisting pair $(T, K)$, from the fact 
that, in particular, $X$ is $(S, F)$-versal, where $S$ and $F$ are as follows.

\begin{defn} \label{def.vector-space} 
For the rest of this section and in Section~\ref{sect.proof-of-thm1cd}:

\begin{itemize}
\smallskip
\item
$V$ will denote a generically free linear representation of $G$.

\smallskip
\item
$F$ will denote the field $k(V)^G$. We will choose $V$ so that
$F$ is infinite (see Remark~\ref{rem.gf-rep}).

\smallskip
\item
$V_0$ will denote a dense open $G$-invariant subvariety of $V$ which is the 
total space of a $G$-torsor $V_0 \to B$.

\smallskip
\item
$S \to \Spec(F)$ will denote the $G$-torsor obtained by
pulling back $V_0 \to B$ via the generic point $\eta \colon \Spec(F) \to B$.
\end{itemize}
\end{defn}

We now proceed with the proof of Theorem~\ref{thm1}\ref{thm1.b}.

\begin{lem} \label{lem:canHitAnything}
Let $X/k$ be a geometrically irreducible $G$-variety,
and suppose $X$ is $(T, K)$-versal
for some twisting pair $(T, K)$.  Then, for any field extension
$L/k$ and for any closed $L$-subvariety $Y \subsetneq X_L$
(not necessarily $G$-invariant),
there exists a $G$-equivariant $k$-morphism $\psi: T \to X$ 
such that the image of
$\psi_L : T \times_k \Spec(L) \to X_L$ is not contained in $Y$.
\end{lem}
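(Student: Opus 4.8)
The plan is to reduce the problem to the ordinary (non-relative) versality notion already available to us, applied not to $X$ but to a cleverly chosen $G$-subvariety, and then to invoke Remark~\ref{rem.gen-free} together with the characterization of $(T,K)$-weak versality recorded just before Definition~\ref{def.vector-space}. The issue is that $(T,K)$-versality only guarantees $G$-equivariant morphisms $T \to U$ for $G$-invariant dense open $U \subset X$, whereas the subvariety $Y$ we must avoid need not be $G$-invariant, and need not even be defined over $k$. So the first move is to replace $Y$ by something $G$-invariant and $k$-rational that still fails to be all of $X$.

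\medskip

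\textbf{Step 1: Enlarge $Y$ to a proper closed $G$-invariant $k$-subvariety.} Let $L'/k$ be a finite extension over which $Y$ is defined; enlarging $L'$ we may take it Galois. Replace $Y$ by the union of its finitely many $\Gal(L'/k)$-conjugates and then by the union of the (finitely many) $G$-translates — here we use that $G$ is of finite type, so $G \cdot Y$ is still a proper closed subvariety since $X$ is irreducible and $Y \subsetneq X$ (a $G$-orbit of a proper closed subvariety cannot cover an irreducible variety, as $X$ is of finite type). The result is a closed $G$-invariant $L'$-subvariety stable under $\Gal(L'/k)$, hence descends to a closed $G$-invariant $k$-subvariety $Z \subsetneq X$ with $Y_L \subseteq Z_L$ after base change. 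Set $U := X \setminus Z$, a dense $G$-invariant open $k$-subvariety of $X$.

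\medskip

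\textbf{Step 2: Apply $(T,K)$-weak versality to $U$.} Since $X$ is $(T,K)$-versal, $U$ is $(T,K)$-weakly versal, so there is a $G$-equivariant $k$-morphism $\psi \colon T \to U \hookrightarrow X$. I claim the image of $\psi_L \colon T_L \to X_L$ is not contained in $Y$. Indeed the image of $\psi$ already lands in $U = X \setminus Z$, so the image of $\psi_L$ lands in $U_L = X_L \setminus Z_L$, which is disjoint from $Y_L \subseteq Z_L$; as $T$ is nonempty (it is a torsor over a field) the image of $\psi_L$ is nonempty, hence not contained in $Y$. This $\psi$ is the desired morphism.

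\medskip

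\textbf{Main obstacle.} The only genuinely delicate point is Step~1: verifying that passing from $Y$ to the union of its Galois conjugates and then to its $G$-saturation keeps it a \emph{proper} closed subvariety of $X$. Properness under taking finitely many Galois conjugates is clear (a finite union of proper closed sets in the irreducible $X_{L'}$ is proper). For the $G$-saturation one argues that $G \cdot Y'$ is the image of the morphism $G \times Y' \to X$, hence constructible of dimension at most $\dim G + \dim Y' < \dim G + \dim X$; since $X$ is irreducible this image has dimension $\le \dim X$ but its closure, were it all of $X$, would force $\dim Y' \ge \dim X$, i.e. $Y' = X$ — contradiction. (One should be slightly careful that $G$ may be disconnected, but finiteness of the component group is not needed: the dimension count only uses $\dim G$.) Everything else is bookkeeping with the adjunction and base change facts from Section~\ref{sect.twisting1}.
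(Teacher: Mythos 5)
Your proof has two genuine gaps, both in Step 1, and both serious enough that the argument does not go through.

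\textbf{Gap 1: $Y$ need not be defined over a finite extension of $k$.} The hypothesis allows $L/k$ to be an arbitrary field extension, not an algebraic one. If, say, $L = k(t)$ and $Y$ is a point of $X = \bbA^1$ with transcendental coordinate $t$, there is no finite extension $L'/k$ over which $Y$ is defined, and the Galois-conjugate construction has nothing to act on. The paper handles this by descending $Y$ to a finitely generated $k$-\emph{subalgebra} $R \subset L$ (not a finite field extension) and then arguing over the scheme $\Spec(R)$; one must then take the set $C$ of points of $X$ whose entire fibre in $X_R$ lies in $Y_0$, and check that $C$ is closed, proper and defined over $k$. This genuinely different mechanism is unavoidable when $L/k$ is transcendental.

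\textbf{Gap 2: the $G$-saturation of a proper closed subvariety need not be proper, nor closed.} Your dimension count $\dim(G\cdot Y') \leqslant \dim G + \dim Y'$ gives no bound below $\dim X$ unless $G$ is finite. Concretely, take $G = \bbG_m$ acting on $X = \bbA^1$ by scaling and $Y' = \{1\}$; then $G\cdot Y' = \bbA^1\setminus\{0\}$ is dense (and not even closed). Your parenthetical remark that ``finiteness of the component group is not needed: the dimension count only uses $\dim G$'' is precisely where the argument fails: the dimension count you give is vacuous when $\dim G \geqslant 1$. The paper sidesteps saturating $Y$ entirely. Instead it defines $Z$ to be the closure of $\bigcup \im(\psi)$, where $\psi$ ranges over $G$-equivariant $k$-morphisms $T \to X$ with image \emph{already contained in} $Y$; each such image is $G$-invariant (as the image of an equivariant map from a torsor), so $Z$ is a $G$-invariant closed subvariety, and since $Z \subseteq Y \subsetneq X$ it is automatically proper. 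Then any $\psi$ avoiding $Z$ cannot have image inside $Y$, by construction. This ``reachable locus'' trick is the key idea you are missing; directly saturating $Y$ by $G$ does not work.

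Step 2 of your argument is fine once one has a dense open $G$-invariant $U$ defined over $k$ whose complement contains $Y_L$, but the two gaps above mean that your Step 1 does not deliver such a $U$.
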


\begin{proof}
First assume $L = k$. If $Y$ is $G$-invariant, the lemma follows from 
the definition of $(T, K)$-versality. If $Y$ is not 
$G$-invariant, we proceed as follows.  Let $Z$ be the closure of the union 
$\bigcup \im(\psi)$, where
$\psi : T \to X$ varies over all $G$-equivariant $k$-morphisms whose image is
contained in $Y$.  Since each $\psi$ is $G$-equivariant,
the closure of each $\im(\psi)$ is $G$-invariant,
as is the closure of their union.
In other words, the subvariety $Z$ is $G$-invariant.
Note that $Z \subseteq Y \subsetneq X$.
Since $X$ is $(T, K)$-versal, there is a $k$-morphism $\psi : T \to X$
whose image is in the complement of $Z$.  By the construction of $Z$, the
image of any such map is not contained in $Y$.  This completes
the proof of the lemma in the case where $k = L$.

Now assume $L/k$ is arbitrary. 
Let $X = U_1 \cup \dots \cup U_m$ be an open affine cover of $X$
defined over $k$. (We do not
assume that the $U_i$ are $G$-invariant.) The defining equations of $Y$ in
each $U_i$ involve only a finite number of elements of $L$.  Let $R$ be the
$k$-subalgebra of $L$ generated by all these elements.  Then $Y$ is, in fact,
defined over $\Spec(R)$.  In other words, there exists a closed $k$-subvariety
$Y_0 \subset X_R = X \times_k \Spec(R)$ such that $Y = Y_0 \times_R L$.

Since $Y \neq X_L$, clearly $Y_0 \neq X_R$.  Let 
$\pi: X_R = X \times_k \Spec(R) \to X$ be the natural projection and 
\[ C := \{ x \in X \, | \, \pi^{-1}(x) \subset Y_0 \} . \]
Since $\pi$ is an open map and $C$ is the complement of the image of
the complement of $Y_0$, $C$ is a closed subvariety of $X$  
defined over $k$ and $C \ne X$ (because $Y_0 \neq X_R$).
As we showed above, there is a $G$-equivariant $k$-morphism 
$\psi : T \to X$ whose image is not contained in $C$.  
Then the image of $\psi \times_k \Spec(R) \colon T \times \Spec(R) \to
X_R$ is not contained in $Y_0$ 
and thus, the image of $\psi_L \colon T \times \Spec(L) \to X_L$ 
is not contained in $Y$.
\end{proof}

\begin{cor} \label{cor:sepDense}
Let $X$ be a geometrically irreducible $k$-variety,
and let $L/k$ be a field extension.
Note that there is a natural inclusion of 
sets $X(k) \hookrightarrow X_L(L)$ by pulling back
$\Spec(k) \to X$ by $\Spec(L) \to \Spec(k)$.
Then $X(k)$ is dense in $X$ if and only if $X(k)$ is dense in $X_L$.
\end{cor}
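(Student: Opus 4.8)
The plan is to prove the two directions separately, exploiting the fact that $X(k)\hookrightarrow X_L(L)$ is an honest inclusion of subsets of $X_L(L)$. The forward direction (density in $X$ implies density in $X_L$) is the easy half: if $U\subseteq X_L$ is a nonempty open subset, then its image under the projection $X_L\to X$ contains a nonempty open $U_0\subseteq X$ (projection along $\Spec(L)\to\Spec(k)$ is an open map because $\Spec(L)\to\Spec(k)$ is flat, or one can argue directly on an affine chart). Since $X(k)$ is dense, some $x\in X(k)$ lies in $U_0$; lifting $x$ to a point of $X_L(L)$ via base change gives an $L$-point in the fibre over $x$, which one checks lands in $U$. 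Actually, the cleanest phrasing: a $k$-point of $X$ landing in $U_0$ pulls back to an $L$-point of $X_L$, and that $L$-point lies over a point of $U_0$, hence in $\pi^{-1}(U_0)$; we must choose $U_0$ small enough that $\pi^{-1}(U_0)\subseteq U$, which is possible exactly because $U$ is open and $\pi$ is open, or by a direct coordinate computation on affine charts.

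For the reverse direction — density in $X_L$ implies density in $X$ — suppose toward a contradiction that $X(k)$ is not dense in $X$. Then the closure $Y$ of $X(k)$ in $X$ is a proper closed $k$-subvariety $Y\subsetneq X$, and every $k$-point of $X$ factors through $Y$. Base-changing, $Y_L\subsetneq X_L$ is a proper closed $L$-subvariety (properness is preserved since $\Spec(L)\to\Spec(k)$ is faithfully flat, or by \cite[Proposition 2.7.1]{EGA4}). Now I claim every point of $X(k)\subseteq X_L(L)$ factors through $Y_L$: indeed a $k$-point $\Spec(k)\to X$ that factors through $Y$ base-changes to $\Spec(L)\to X_L$ factoring through $Y_L$. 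Hence $X(k)$, viewed inside $X_L(L)$, lies in the proper closed subset $Y_L$, contradicting density of $X(k)$ in $X_L$. This direction does not even require geometric irreducibility.

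The main subtlety — and really the only place care is needed — is the forward direction's use of openness of the projection $\pi\colon X_L\to X$ and the compatibility between ``$x\in X(k)$ lies in a given open $U_0$'' and ``some lift of $x$ to $X_L(L)$ lies in $\pi^{-1}(U_0)$.'' Since the paper has already invoked exactly this kind of argument (openness of projections from $X_R=X\times_k\Spec(R)$ to $X$ in the proof of Lemma~\ref{lem:canHitAnything}), I would simply cite that openness and note that for $x\in X(k)$ the canonical lift $x_L\in X_L(L)$ satisfies $\pi(x_L)=x$, so $x\in U_0$ forces $x_L\in\pi^{-1}(U_0)$; then shrink: given nonempty open $U\subseteq X_L$, pick any $L$-point (the $L$-points are dense) inside $U$, let $x$ be its image in $X$ under $\pi$ — wait, that is backwards. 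Instead: given nonempty open $U\subseteq X_L$, by density of $X_L(L)$ there is $P\in X_L(L)\cap U$; but we want a point of $X(k)$, so this does not directly work. The correct route for the forward direction is the one in the first paragraph: take $U_0\subseteq X$ nonempty open with $\pi^{-1}(U_0)\subseteq U$ — such $U_0$ exists because $\pi$ is open and... no, $\pi$ open gives images open, not preimages of opens contained in a given open. So one genuinely argues on affine charts: reduce to $X=\Spec A$ affine, $U=D(f)$ for some $f\in A\otimes_k L$, write $f=\sum a_i\otimes c_i$ with $c_i$ linearly independent over $k$; then $D(f)$ meeting $X(k)$ (embedded diagonally) is controlled by the ideal generated by the $a_i$, and density of $X(k)$ in $X$ (which is equivalent to: no proper closed $k$-subvariety contains $X(k)$, equivalently the intersection of all such is empty, i.e. $X(k)$ is Zariski-dense) lets us find $x\in X(k)$ with some $a_i(x)\neq0$, whence the lift $x_L\in D(f)$. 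This affine-chart bookkeeping is routine but is where the real content sits; I would write it out in a few lines and conclude.
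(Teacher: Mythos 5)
Your proposal is correct, but it takes a more self-contained route than the paper does. The paper handles the direction ``dense in $X_L$ $\Rightarrow$ dense in $X$'' in one line by noting that $X_L \to X$ is dominant (so the image of a dense set is dense), and handles the other direction by invoking Lemma~\ref{lem:canHitAnything} with $G=\{1\}$, after first disposing of the trivial case where $L$ is finite. You instead re-derive the content of that lemma (in the trivial-group case) by hand via affine charts, and replace the dominance argument with a contrapositive closure argument. Both routes are valid, and yours avoids the edge-case bookkeeping around $L$ finite that the paper needs because Lemma~\ref{lem:canHitAnything} requires a twisting pair with infinite $K$. The affine-chart computation you sketch is the genuine content: write $f=\sum a_i\otimes c_i$ with $c_i\in L$ linearly independent over $k$; for a $k$-point $x$ the evaluation $x_L(f)=\sum a_i(x)c_i$ vanishes iff all $a_i(x)=0$, so density of $X(k)$ in $\bigcup D(a_i)$ produces the required $L$-point in $D(f)$. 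One remark worth making: your initial ``openness of $\pi$'' argument, which you second-guessed, can actually be salvaged without the affine computation, because for a $k$-rational point $x$ the fibre $\pi^{-1}(x)=\Spec(\kappa(x)\otimes_k L)=\Spec(L)$ is a single point, namely the canonical lift $x_L$; so $x\in\pi(U)$ forces $x_L\in U$ automatically. You did not need to retreat to charts, although doing so is of course perfectly fine, and sidesteps the question of whether $X_L\to X$ is open when $L/k$ is not of finite type (it is, on $k$-rational fibres, which is all you use).
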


\begin{proof}
If $L$ is finite then the result is immediate: in this case $X(k)$ 
is dense in $X$ if and only if $X(k)$ is dense in $X(L)$ if and only if
$X$ is a point. Thus we may assume that $L$ is infinite.
The morphism $X_L \to X$ is dominant, so one direction is obvious.
The other implication is a special case of
Lemma~\ref{lem:canHitAnything} with $K=L$, $G= \{ 1 \}$ and $T=\Spec(K)$.
\end{proof}

\begin{lem} \label{lem:arithVersalEquiv}
Let $X/k$ be a geometrically irreducible $G$-variety,
let $(T, K)$ be a twisting pair, 
and let $L/K$ be a field extension which splits $T$.
Fix an $L$-point $s \in T(L)$.  Then the following are equivalent:
\begin{enumerate}
\item $(^TX)(K)$ is dense in $^TX$, \label{lem:arith:twists}

\item the set of points $f_L(s)$, where $f$ varies over all
$G$-equivariant $k$-morphisms $f \colon T \to X$, 
is dense in $X_L$.
\label{lem:arith:points}
\end{enumerate}
\end{lem}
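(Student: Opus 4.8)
The plan is to use the adjunction of Proposition~\ref{prop:twistAdjunction} together with the explicit "dictionary" between points of $^TX$ and equivariant maps $T \to X$ provided by Corollary~\ref{cor:adjunctionSplitPoints}, and then to transport the density statement across this dictionary. First I would fix the splitting field $L/K$ and the point $s \in T(L)$, and let $t_s \colon (^TX)_L \to X_L$ be the $L$-isomorphism of Corollary~\ref{cor:adjunctionSplitPoints} (the one for which $s \times t_s$ is a section of $\pi_L \colon T_L \times X_L \to (^TX)_L$). Setting $Y = \Spec(K)$ in Proposition~\ref{prop:twistAdjunction} gives a bijection between $G$-equivariant $k$-morphisms $f \colon T \to X$ and $K$-points $\beta \in (^TX)(K)$; Corollary~\ref{cor:adjunctionSplitPoints} then says that under this bijection one has $f_L(s) = t_s(\beta_L)$ in $X(L)$. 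In other words, the set $\{ f_L(s) : f \in \Hom_{G\gVar}(T,X)\}$ appearing in \ref{lem:arith:points} is exactly the image under the isomorphism $t_s$ of the set $\{ \beta_L : \beta \in (^TX)(K)\} \subseteq (^TX)_L(L)$.

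Since $t_s$ is an $L$-isomorphism of $L$-varieties, it carries dense subsets to dense subsets, so \ref{lem:arith:points} is equivalent to the assertion that $\{\beta_L : \beta \in (^TX)(K)\}$ is dense in $(^TX)_L$. It therefore remains to check that this last condition is equivalent to \ref{lem:arith:twists}, i.e.\ to the density of $(^TX)(K)$ in $^TX$. This is precisely Corollary~\ref{cor:sepDense} applied to the geometrically irreducible $K$-variety $^TX$ and the field extension $L/K$: note that $^TX$ is geometrically irreducible because $X$ is (twisting does not change the variety over $\bar k$, and $X_{\bar k} \cong (^TX)_{\bar k}$). Assembling the two equivalences completes the proof.

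The main point requiring care is the compatibility between the abstract adjunction isomorphism of Proposition~\ref{prop:twistAdjunction} and the concrete evaluation-at-$s$ description — that is, making sure the set in \ref{lem:arith:points} really is literally $t_s$ of the set of $L$-points coming from $K$-points of $^TX$, with no stray $G$-translation ambiguity. This is exactly what Corollary~\ref{cor:adjunctionSplitPoints} is designed to supply, so the work is really in invoking it correctly rather than in any new argument; once that identification is in hand, the rest is a formal transport of density through an isomorphism together with the already-established Corollary~\ref{cor:sepDense}.
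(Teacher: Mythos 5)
Your proof is correct and uses exactly the same ingredients as the paper's: the adjunction of Proposition~\ref{prop:twistAdjunction}, the identification $f_L(s) = t_s(\beta_L)$ from Corollary~\ref{cor:adjunctionSplitPoints}, the $L$-isomorphism $t_s$, and Corollary~\ref{cor:sepDense} applied to the geometrically irreducible $K$-variety $^TX$. The only difference is cosmetic — you trace the chain of equivalences from (b) to (a) while the paper goes from (a) to (b).
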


Note that condition \ref{lem:arith:points} is considerably stronger
than the condition 
that the union of $f_L(T)$ is dense in $X_L$, which came up in
Lemma~\ref{lem:canHitAnything}.  This discrepancy is precisely 
the source of the difficulty we encountered in
Example~\ref{ex.motivation}.

\begin{proof}
Since $X$ is geometrically irreducible, so is $^TX$.
By Corollary~\ref{cor:sepDense}, 
condition \ref{lem:arith:twists} is equivalent to 
\begin{enumerate}[start=3]
\item $(^TX)(K)$ is dense in $({}^TX)_L$. \label{lem:arith:temp1}
\end{enumerate}
\noindent
Let $t_s$ be an $L$-isomorphism between $(\, ^T X)_L$ and $X_L$,
chosen so that 
\[ (s, t_s) \colon \,  (^T X)_L \to T_L \times X_L \]
is a section (defined over $L$) of the $G$-torsor 
$T \times X \to \, ^T X$, as in
the statement of Corollary~\ref{cor:adjunctionSplitPoints}.
Then \ref{lem:arith:temp1} is equivalent to 
\begin{enumerate}[start=4]
\item the set of $L$-points of the form $t_s(q)$,
where $q$ varies over $(^TX)(K)$, is dense in $X_L$. \label{lem:arith:temp2}
\end{enumerate}
\noindent
By Corollary~\ref{cor:adjunctionSplitPoints}, \ref{lem:arith:temp2} is
equivalent to \ref{lem:arith:points}.
\end{proof}

\begin{proof}[Proof of Theorem~\ref{thm1}\ref{thm1.b}]
$\impliedby$ (cf.~\cite[Proposition 1.12]{faviflorence}):
Assume $K$-points are dense in $^T X$ for every
twisting pair $(T, K)$. We want to
show that every dense $G$-invariant open subvariety $U \subset X$ is weakly
versal. By Theorem~\ref{thm1}\ref{thm1.a} it suffices to show that $^T U$
contains a $K$-point for every twisting pair $(T, K)$, as above.
This follows from the fact that $^T U$ is a dense open subvariety of $^T X$;
see Corollary~\ref{cor:twistsPreserve}\ref{cor:twistsPreserve:top}.

$\implies$:
Assume $X$ is versal.  Then $X$ is geometrically irreducible; see
Proposition~\ref{prop:versalGeomIrred}.
Fix a twisting pair $(T, K)$.
We want to show $K$-points are dense in $^TX$.
Let $L$ be a splitting field for $T$, and let $s$ be a point in $T(L)$.
By Lemma~\ref{lem:arithVersalEquiv}, it suffices to show
that for every closed subvariety $Y \subsetneq X_L$ defined over $L$,
there exists a $G$-equivariant $k$-morphism $f \colon T \to X$ 
such that $f_L(s) \not \in Y$.

As explained above, we cannot construct $f$ directly
using only the fact that $X$ is $(T, K)$-versal. We will 
instead construct $f$ in two steps, as a composition of 
a $G$-equivariant $k$-morphism $f_1 \colon T \to V$ and 
a $G$-equivariant rational map $f_2 \colon V \dasharrow X$. 
Here $V$, $S$ and $F$ are as in Definition~\ref{def.vector-space}.

Let us begin by constructing $f_2$.
Since $X$ is versal, by Lemma~\ref{lem:canHitAnything} 
there exists a
$G$-equivariant $k$-morphism
$\psi \colon S \to X$ such that the image of $\psi_L$ is not contained in $Y$.
Equivalently, there exists a $G$-equivariant rational $k$-map  
$f_2 \colon V \dasharrow X$ such that the image of $(f_2)_L$
is not contained in $Y$. (Note that our construction of $f_2$ makes 
use of the fact that $X$ is $(S, F)$-versal, not just $(T, K)$-versal.)

Let $U$ be a $G$-invariant dense open subvariety of $V$ defined over $k$,
such that $f_2$ is regular on $U$. From now on $f_2$ will denote 
the regular map $U \to X$. 
Note that $(f_2)_L^{-1}(Y)$ is a closed subvariety of $U_L$ and
$(f_2)_L^{-1}(Y) \neq U_L$.
Now recall that by Lemma~\ref{lem.Hilbert90},
$K$-points are dense in $^T V \simeq V_K$ and hence, in $^T U$.
Thus, by Lemma~\ref{lem:arithVersalEquiv}, there exists a $G$-equivariant 
$k$-morphism $f_1 \colon T \to U$ such that
$(f_1)_L(s) \not \in (f_2)_L^{-1}(Y)$.
By our construction, the composition $f_2 f_1$ is a regular $G$-equivariant 
morphism $T \to X$ and $f_2 f_1(s) \not \in Y$, as desired. This completes 
the proof of Theorem~\ref{thm1}(b).
\end{proof}

\section{Proof of Theorem~\ref{thm1}\ref{thm1.c} and \ref{thm1.d}}
\label{sect.proof-of-thm1cd}

Let 
\[ \xymatrix{
S \ar@{->}[r] \ar@{->}[d]  & V_0 \ar@{^{(}->}[r] \ar@{->}[d]  & V \\  
\Spec(F) \ar@{->}[r]^{\eta} & B & } \]
be as in Definition~\ref{def.vector-space}.

\begin{lem} \label{lem:twistByLinRep}
Let $X$, $Y$ be geometrically irreducible $k$-varieties
and suppose $X$ has a $G$-action.
If there exists a dominant rational (resp. birational) 
$F$-map $f \colon Y_F \dasharrow {}^SX$
then there exists a $G$-equivariant dominant 
rational (resp. birational) $k$-map
$V \times Y \dasharrow V \times X$.
\end{lem}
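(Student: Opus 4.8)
The plan is to exploit the adjunction of Proposition~\ref{prop:twistAdjunction} together with the fact that $S$ is, up to a dense open subset, a linear representation of $G$. Recall first that, by definition, $V_0 \to B$ is a $G$-torsor, and $S \to \Spec(F)$ is its generic fibre; so $S$ is a dense open $G$-subvariety of $V_F$, and $F = k(V)^G = k(B)$. The key geometric observation is that the twist of $V$ itself is trivial: by Lemma~\ref{lem.Hilbert90}, $^S V \simeq V_F$ as $F$-varieties. More precisely, $B$ is a rational quotient of $V$ (in the sense that $V \dasharrow B$ is a $G$-torsor over a dense open of $B$), and the total space $V_0 \times V \to {}^{S}(V_F)$ after base change describes $V$ birationally as $B \times_F {}^S(V_F)$-fibred. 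The upshot I want is a $G$-equivariant birational identification
\[
V \times Z \;\;\dashrightarrow\;\; V_0 \times_{B} ({}^{S}Z)
\]
(with $G$ acting diagonally on the left, via the torsor on the right) for any geometrically irreducible $G$-variety $Z$; this is essentially Corollary~\ref{cor:twistsPreserve}\ref{cor:twistsPreserve:prod} combined with the identification of $V$ with the total space of the torsor $S \to \Spec F$ spread out over $B$.

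Granting that, here is how I would assemble the proof. Start with the given dominant (resp. birational) $F$-map $f \colon Y_F \dashrightarrow {}^S X$. Apply the twisting functor $\mathcal{R}_{S}$ — or rather, pull back along the torsor $S \to \Spec(F)$ and use functoriality of twisting (Section~\ref{sect.twisting1}, and Corollary~\ref{cor:twistsPreserve}\ref{cor:twistsPreserve:dominant} for preservation of dominance) — to obtain from $f$ a $G$-equivariant dominant (resp. birational) $F$-map
\[
S \times_F Y_F \;\;\dashrightarrow\;\; S \times_F X_F ,
\]
using that $^S({}^S X) \simeq X_F$ and $^S(Y_F) \simeq S \times_F Y_F$ up to the relevant canonical isomorphisms. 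Equivalently, under the adjunction $\Hom_{G\gVar}(S \times_F Y_F, X_F) \simeq \Hom_{\Var}(Y_F, {}^S X)$, the map $f$ corresponds directly to a $G$-equivariant rational map $S \times Y \dashrightarrow X$, and hence (pairing with $\pr_1$) to a $G$-equivariant rational map $S \times Y \dashrightarrow S \times X$ over $S$. Now spread this out: $S$ is the generic fibre of $V_0 \to B$, so a $G$-equivariant rational map defined on $S \times Y$ extends to one defined on a dense $G$-invariant open of $V_0 \times Y$, and $V_0$ is dense open in $V$. This gives a $G$-equivariant rational map $V \times Y \dashrightarrow V \times X$, dominant (resp. birational) because these properties are generic over $B$ and hold on the generic fibre by construction; use Corollary~\ref{cor:twistsPreserve}\ref{cor:twistsPreserve:dominant} once more if needed to check dominance is preserved under the base change from $F$ back to $k$.

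The main obstacle I anticipate is the bookkeeping in the spreading-out step: passing from a rational map defined on the generic fibre $S \times_F Y_F$ over $\Spec(F)$ to a genuinely $G$-equivariant rational map on $V \times Y$ over $k$, while keeping track of the identification $F = k(V)^G$ and ensuring the $G$-action matches up. Concretely one must choose a finitely generated $k$-subalgebra of $F$ over which everything is defined, check that the resulting scheme is $G$-equivariantly birational to a dense open of $V \times Y$, and verify that dominance/birationality of the map on the generic fibre propagates to a dense open (which is exactly where geometric irreducibility of $X$ and $Y$ is used, so that the generic fibre is dense and irreducible). None of this should be genuinely hard, but it is the part requiring care; the rest is a formal consequence of the adjunction and the triviality of $^S V$.
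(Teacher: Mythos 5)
Your plan --- use the adjunction of Proposition~\ref{prop:twistAdjunction} to convert $f$ into a $G$-equivariant dominant (resp.\ birational) $F$-map $S \times_F Y_F \dasharrow S \times_F X_F$, then spread this out along $V_0 \to B$ and identify $V_0 \times_B Y_B$ with $V_0 \times_k Y$ --- is exactly the paper's proof, and you correctly locate the content in the spreading-out step. The paper carries it out by restricting $f$ to an open $Z \subset Y_F$ where it is regular, shrinking $B$ so that $Z$ descends to an open $Z' \subset Y_B$, and then reading off the rational map $V_0 \times_k Y \simeq V_0 \times_B Y_B \dasharrow V_0 \times_B X_B \simeq V_0 \times_k X$.

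One caution: your opening ``key geometric observation'' paragraph contains several slips that the actual argument does not rely on. The generic fibre $S$ of $V_0 \to B$ is a \emph{closed} $F$-subvariety of $(V_0)_F$, not a dense open $G$-subvariety of $V_F$. The formula $^S(Y_F) \simeq S \times_F Y_F$ is wrong as written: since $G$ acts trivially on $Y_F$, one has $^S(Y_F) = Y_F$; what you want is the \emph{left} adjoint $\calL_S(Y_F) = S \times_F Y_F$, not the twist. The expression $V_0 \times_B ({}^S Z)$ is ill-typed, as $^S Z$ lives over $\Spec(F)$ rather than over $B$. And Lemma~\ref{lem.Hilbert90} plays no role here --- the only identification the proof actually needs is the elementary one $V_0 \times_B Y_B \simeq V_0 \times_k Y$, which has nothing to do with Hilbert 90. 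None of these slips derail the plan, since the part of your write-up that matters (adjunction, then descent along $V_0 \to B$) is sound; but the descent does need to be done explicitly as in the paper, and the first paragraph should be discarded or rewritten.
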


\begin{proof} Choose an open $F$-subvariety $Z \subset Y_F$ such that  
$f_{\, | Z} \colon Z \to { }^S X$ is regular.
By Proposition~\ref{prop:twistAdjunction}, $f$ gives rise to a commutative 
diagram
\[ \xymatrix{
S \times_F Z \ar@{->}[r] \ar@{->}[d]  &
S \times_F X_F \ar@{->}[d] \\
 Z \ar@{->}[r]^(.5){f_{\, | Z}} \ar@{->}[dr]  & ^S X_F \ar@{->}[d] \\
 & \Spec(F) \, ,} \]
where the vertical maps in the square are $G$-torsors.
By a well known property of torsors, since the map $Y_F \dasharrow {}^SX$ is
dominant (resp.~birational), so is the top horizontal map.

Note that $Z$ may not 
descend to a variety over $k$; however, after replacing $B$ by 
a dense open subvariety, we may assume that the open immersion $Z \subset Y_F$ 
descends to $B$, i.e., there exists a $k$-variety $Z'$ such that the pull-back
diagram 
\[ \xymatrix{ & Y_F \ar@{->}[ddl] \ar@{->}[r]
& Y_B \ar@{->}[ddl] \\
Z \ar@{->}[r] \ar@{->}[d] \ar@{^{(}->}[ur] 
 & Z' \ar@{->}[d] \ar@{^{(}->}[ur] &  \\  
\Spec(F) \ar@{->}[r]^{\eta}   & B & } \] 
commutes and $Z' \subset Y_B$ is an open immersion; 
see~\cite[Proposition 2.7.1(x)]{EGA4}.

By the naturality of the fiber product operation, 
the $G$-equivariant $F$-map
\[ S \times_F Z \to S \times_F X_F \]
is $G$-equivariantly isomorphic to an $F$-map
\[ (V_0 \times_B Z')_F \to (V_0 \times_B X_B)_F \ ; \]
see~\cite[Corollaire 3.3.10]{EGA1}.
Shrinking $B$ once more, we obtain a dominant
(resp. birational) $G$-equivariant map
$V_0 \times_B Z' \to V_0 \times_B X_B$ 
such that the commutative triangle on the left is the pull-back of 
the commutative triangle on the right to the generic point $\eta$:
\[ \xymatrix{
S \times_F Z \ar@{->}[r] \ar@{->}[dr]  & S \times_F X_F \ar@{->}[d] & 
  &  V_0 \times_B Z' \ar@{->}[r] \ar@{->}[dr]  & V_0 \times_B X_B \ar@{->}[d] \\ 
 & \Spec(F)  \ar@{->}[rrr]^{\eta} & & &  B \, ,} . \]

Since $Z' \hookrightarrow Y_B$ is an open immersion, 
we obtain a dominant (resp. birational)
$G$-equivariant rational map
\begin{equation} \label{e.lem4.7}
V_0 \times_B Y_B \dasharrow V_0 \times_B X_B
\end{equation}
We now note that $V_0 \times_B Y_B \simeq V_0 \times_k Y$ 
and $V_0 \times_B \, X_B \simeq  V_0 \times_k X$, where $\simeq$ denotes
$G$-equivariant isomorphism over $k$. Thus \eqref{e.lem4.7}
gives us a dominant (resp. birational) $G$-equivariant rational $k$-map 
$V_0 \times_k Y \dasharrow V_0 \times_k X$ or equivalently, 
a dominant (resp. birational) $G$-equivariant rational $k$-map 
$V \times_k Y \dasharrow V \times_k X$, as desired.
\end{proof}

\begin{proof}[Proof of Theorem~\ref{thm1}\ref{thm1.c}]

$\implies$:
Suppose $X$ is very versal, i.e., there exists a dominant
$G$-equivariant map $f \colon W \dasharrow X$. Then for any twisting pair 
$(T, K)$, 
the $K$-rational map $\, ^T f \colon \, ^T W \dasharrow \, ^T X$ 
is dominant; see Corollary~\ref{cor:twistsPreserve}.
By Lemma~\ref{lem.Hilbert90}, $^T W \simeq_K W_K$. Thus $^T X$ 
is $K$-unirational.

$\impliedby$:
By assumption, there exists a dominant rational map
$\bbA_F^n \dasharrow {}^SX$ for some
integer $n$. By Lemma~\ref{lem:twistByLinRep}, we obtain a dominant rational
$G$-equivariant $k$-map $V \times \bbA_k^n \dasharrow V \times X$, where
$G$ acts trivially on $\bbA_k^n$. Composing 
this map with the projection $V \times X \to X$, we see that $X$ 
is very versal.
\end{proof}

\begin{proof}[Proof of Theorem~\ref{thm1}\ref{thm1.d}]

$\implies$: Suppose the $G$-action on $X$ is stably birationally linear, i.e.,
there exists a $G$-equivariant birational isomorphism 
$\phi \colon X \times W_1 \biratarrow W_0$ for some linear 
representations $G \to \GL(W_1)$ and $G \to \GL(W_0)$ defined over $k$.
Twisting $\phi$ by a twisting pair $(T, K)$, we obtain
a birational isomorphism 
\[ ^T \phi \colon (^T X) \times_K (^T W_1) \biratarrow {}^T W_0 \, . \] 
Since $^T W_1$ and $^T W_0$ are affine spaces over $K$ 
(cf.~Lemma~\ref{lem.Hilbert90}), this tells us that
$^T X$ is stably rational over $K$.

$\impliedby$:
By assumption,
there is a birational isomorphism
$\bbA_F^n \biratarrow {}^SX \times \bbA_F^m$ 
defined over $F$. Now note that
${}^SX \times \bbA_F^m \simeq {}^S(X \times \bbA_k^m)$, where 
$G$ acts trivially on $\bbA_k^m$; cf.~Corollary~\ref{cor:twistsPreserve}(a).
By Lemma~\ref{lem:twistByLinRep}, we obtain a
$G$-equivariant birational isomorphism 
$V \times \bbA_k^n \biratarrow V \times X \times \bbA_k^m$,
defined over $k$.  Here $G$ acts trivially on both $\bbA_k^n$ and $\bbA_k^m$.
This shows that the $G$-action on $X$ is stably birationally linear.
\end{proof}

\section{Forms of Moduli Spaces}
\label{sec:ModuliSpaces}

Throughout this section, the base field $k$ will be assumed to be of
characteristic $0$ and $\moduli{g}{n}$ will denote the moduli space 
of stable curves of genus $g$
with $n$ marked points viewed as a $k$-variety. The symmetric group 
$\Sym_n$ acts on $\moduli{g}{n}$ by permuting the $n$ marked points. 
If $k = \bbC$, A.~Massarenti~\cite{massarenti} 
showed that $\Sym_n$ is, in fact, the full automorphism group
of $\moduli{g}{n}$ whenever $2g + n \geqslant 5$.  This extends
earlier work of A.~Bruno and M.~Mella~\cite{bm} in 
the case where $g = 0$. In other words, if $2g + n \geqslant 5$ then
the natural homomorphism   
\[ \iota  \colon \Sym_n \to \Aut(\moduli{g}{n}) \]
is an isomorphism.  Here we view $\Aut(\moduli{g}{n})$ as a group scheme 
over $k$. (The automorphisms of a complete variety 
carries a natural structure of an algebraic group scheme; cf.~\cite{mo}.  
A priori this group scheme is neither affine nor of finite type over $k$.)
By the Lefschetz principle $\iota$ is an isomorphism for any 
algebraically closed base field $k$ of characteristic zero.
In fact, the same is true if $k$ is not assumed to be algebraically 
closed; this can be seen by descent~\cite[Proposition 2.7.1]{EGA4},
after passing to $\overline{k}$.

\begin{thm} \label{thm.moduli} Suppose $K/k$ is a field extension.
Then 
\begin{enumerate}
\item For $n \geqslant 5$,
every $K$-form of $\moduli{0}{n}$ is $K$-unirational.
\item For $3 \leqslant n \leqslant 9$,
every $K$-form of $\moduli{1}{n}$ is $K$-unirational.
\end{enumerate}
\end{thm}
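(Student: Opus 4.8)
The key idea is to apply Theorem~\ref{thm1}\ref{thm1.c} with $G = \Sym_n$ acting on $X = \moduli{g}{n}$. By Massarenti's result (as extended by the Lefschetz principle and descent, discussed above), the natural homomorphism $\iota \colon \Sym_n \to \Aut(\moduli{g}{n})$ is an isomorphism of $k$-group schemes in the ranges $2g+n \geqslant 5$ in question. Consequently, every $K$-form of $\moduli{g}{n}$ arises (up to $K$-isomorphism) as a twist $^T X$ for some $\Sym_n$-torsor $T \to \Spec(K)$, i.e., for some twisting pair $(T,K)$. Indeed, $K$-forms of $\moduli{g}{n}$ are classified by $H^1(K, \Aut(\moduli{g}{n})) = H^1(K, \Sym_n)$, which is precisely the set of $\Sym_n$-torsors over $K$. (Here we should note that $K$ infinite is harmless: if $K$ is finite the statement is trivial, since a $K$-form of a geometrically unirational variety over a finite field is automatically unirational by Lang–Weil / Lang's theorem, or one can simply pass to $K(t)$ and descend — either way the interesting case is $K$ infinite.) So by Theorem~\ref{thm1}\ref{thm1.c}, it suffices to prove:
\begin{equation*}
\text{the }\Sym_n\text{-action on }\moduli{0}{n}\ (n \geqslant 5),\ \text{and on }\moduli{1}{n}\ (3 \leqslant n \leqslant 9),\ \text{is very versal.}
\end{equation*}

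**Establishing very versality of the permutation action.**

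To show the $\Sym_n$-action is very versal, I must exhibit a linear representation $\Sym_n \to \GL(W)$ together with a dominant $\Sym_n$-equivariant rational map $W \dashrightarrow \moduli{g}{n}$. The natural candidate uses the classical birational models of these moduli spaces as configuration spaces. For $\moduli{0}{n}$: a general stable $n$-pointed genus-$0$ curve is a $\bbP^1$ with $n$ distinct marked points, so $\moduli{0}{n}$ is $\Sym_n$-equivariantly birational to $(\bbP^1)^n$ modulo the diagonal $\PGL_2$-action, with $\Sym_n$ permuting the factors. Taking $W = (\bbA^2)^n = \Mat_{2\times n}$, which carries the $\Sym_n$-action permuting columns, the rational map sending a $2 \times n$ matrix of general rank to the configuration of its $n$ columns in $\bbP^1$, then to the corresponding point of $\moduli{0}{n}$, is $\Sym_n$-equivariant and dominant. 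For $\moduli{1}{n}$ with $n \leqslant 9$: a general stable $n$-pointed genus-$1$ curve is a smooth plane cubic with $n$ marked points, and up to $\PGL_3$ one can fix the cubic (or work with the universal family of plane cubics); the relevant birational model realizes $\moduli{1}{n}$ as dominated by an open subvariety of $V^{\oplus n}$ for a suitable representation $V$ of $\Sym_n$, with $\Sym_n$ again permuting blocks — here the bound $n \leqslant 9$ is exactly what makes $n$ general points on a plane cubic behave well (this is the classical fact that $9$ general points determine a pencil of cubics, underlying del Pezzo geometry). Concretely one can take $W$ to be $n$ copies of the standard $3$-dimensional representation realizing $\bbP^2$, permuted by $\Sym_n$, mapping to configurations of $n$ points together with a cubic through them.

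**The main obstacle, and a remark on scope.**

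The main technical obstacle is verifying that these maps $W \dashrightarrow \moduli{g}{n}$ are genuinely \emph{$\Sym_n$-equivariant} for the correct linear action of $\Sym_n$ on $W$ — and simultaneously \emph{dominant}. Dominance is where the numerical constraints ($n \geqslant 5$ for genus $0$, $3 \leqslant n \leqslant 9$ for genus $1$) enter: one needs the parameter count to work out so that a general point of the moduli space is hit, which for genus $1$ fails once $n \geqslant 10$ because $10$ general points no longer lie on a plane cubic. Equivariance requires choosing the birational model so that the $\Sym_n$-action on the source is literally by permutation of coordinates/blocks of a vector space (so that it is linear), rather than merely by a birational self-map; this is the delicate point and is where one invokes the explicit configuration-space descriptions rather than an abstract isomorphism. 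Once equivariance and dominance of $W \dashrightarrow \moduli{g}{n}$ are in hand, very versality is immediate from the definition, and Theorem~\ref{thm1}\ref{thm1.c} converts it into the desired unirationality of all $K$-forms. I would present the genus-$0$ case first in full detail as a template, then indicate the (more involved) genus-$1$ adaptation, citing the classical plane-cubic model and del Pezzo-type constructions for the key birational equivalence and the range $n \leqslant 9$.
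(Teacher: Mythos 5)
Your overall strategy is exactly the paper's: invoke Massarenti's theorem to identify $\Aut(\moduli{g}{n})$ with $\Sym_n$, so that every $K$-form is a twist ${}^T X$ by a $\Sym_n$-torsor, reduce via Theorem~\ref{thm1}\ref{thm1.c} to showing the permutation action is very versal, and construct an explicit $\Sym_n$-equivariant dominant rational map from a permutation representation. The genus-$0$ case as you present it (via $(\bbA^2)^n \dasharrow (\bbP^1)^n \dasharrow \moduli{0}{n}$) is precisely the paper's argument.

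However, your genus-$1$ construction as written has a gap for $3 \leqslant n < 9$. You propose $W = (\bbA^3)^n$ mapping to ``$n$ points in $\bbP^2$ together with a cubic through them,'' but for $n < 9$ a general $n$-tuple of points does \emph{not} determine a cubic (indeed, $9$ general points determine a \emph{unique} cubic, not a pencil, and fewer points leave extra freedom), so the map $(\bbP^2)^n \dasharrow \moduli{1}{n}$ is simply not defined: one would need to enlarge $W$ to also parametrize the choice of cubic, or argue differently. The paper sidesteps this entirely with a clean reduction you did not mention: the forgetful map $\moduli{1}{n+1} \dasharrow \moduli{1}{n}$ is $\Sym_n$-equivariant and dominant, so very versality for $\Sym_{n+1}$ acting on $\moduli{1}{n+1}$ implies very versality for $\Sym_n$ acting on $\moduli{1}{n}$; one therefore needs only the $n=9$ case, where $(\bbA^3)^9 \dasharrow (\bbP^2)^9 \dasharrow \moduli{1}{9}$ is well-defined because the cubic through $9$ general points is unique. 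Your proof would be correct once you insert this forgetful-map reduction (or equivalently, add a trivial $\Sym_n$-summand to $W$ parametrizing the cubic for $n < 9$), but as stated the concrete map you describe is ill-defined in that range.
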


\begin{proof}
Let $g = 0$ in part (a) and $g = 1$ in part (b).
Then $2g + n \geqslant 5$ and hence,
$\Sym_n$ is the full automorphism 
group of $\moduli{g}{n}$, in both parts.
Consequently, every $K$-form  
of $\moduli{g}{n}$, over a field extension $K/k$
is isomorphic to $^T X$ for some $\Sym_n$-torsor $T \to \Spec(K)$.
By Theorem~\ref{thm1}\ref{thm1.c} it suffices to show that
the $\Sym_n$-action on $\moduli{g}{n}$ 
is very versal.

(a) Consider the following composition of dominant $\Sym_n$-equivariant 
rational maps: 
\[ (\bbA^2)^n \dasharrow (\mathbb P^1)^n \dasharrow \moduli{0}{n} \, .\]
Here the first map is $n$ parallel applications of
the natural projection $\bbA^2 \setminus \{ 0 \} \to \bbP^1$, and the
second map takes an $n$-tuple of distinct points on $\bbP^1$
to its class in $\moduli{0}{n}$.
The symmetric group $\Sym_n$ acts on the $2n$-dimensional affine 
space $(\bbA^2)^n$ linearly, by permuting the $n$ factors of $\bbA^2$.
This shows that the action of $\Sym_n$ on $\moduli{g}{n}$ is very versal.

\smallskip
(b) 
It suffices to consider the case where $n = 9$. Indeed, 
the natural map $\moduli{g}{n+1} \dasharrow \moduli{g}{n}$ 
which ``forgets" the last marked point,
is $\Sym_n$-equivariant and dominant for any $n \geqslant 0$. Thus if the
$\Sym_{n+1}$-action on $\moduli{g}{n+1}$ is very versal then so is the
$\Sym_n$-action on $\moduli{g}{n}$. 

To show that 
the $\Sym_{9}$-action on $\moduli{1}{9}$ is very versal, we consider 
the following composition of dominant $\Sym_9$-equivariant maps:
\[ (\bbA^3)^9 \dasharrow (\bbP^2)^9 \dasharrow \moduli{1}{9} \, . \]
Here the first map projects each 
$\bbA^3 \setminus \{ 0 \}$ to $\bbP^2$. The second map takes a $9$-tuple 
of points $p_1, \dots, p_9 \in \mathbb P^2$ to the point
of $\moduli{1}{9}$ represented by $(C, p_1, \dots, p_9)$, where
$C$ is the plane cubic curve passing through $p_1, \dots, p_9$. Note that
$C$ is uniquely determined and non-singular
for $p_1, \dots, p_9$ in general position.
\end{proof}

\begin{remark}
It is well known that $\moduli{0}{5}$ 
is a Del Pezzo surface of degree $5$ and that every
Del Pezzo surface of degree $5$ over a field $K/k$ is 
a $K$-form of $\moduli{0}{5}$.  Thus, for $n = 5$, 
we recover the theorem of H.~P.~F.~Swinnerton-Dyer~\cite{sd} 
about the existence of rational points 
on such surfaces (in characteristic $0$ only). 
For alternative proofs of Swinnerton-Dyer's
theorem, see \cite{sb} and~\cite{skoroESD}.
\end{remark}

\begin{remark} \label{rem.genus1}
Over $\bbC$, $\moduli{1}{n}$ itself is known to be rational 
if $n \leqslant 10$ (see~\cite{belorousski} or~\cite[Section 2]{cf}) 
and non-unirational for $n \geqslant 11$; see~\cite[p.~583]{cf}.

Note also that by~\cite[p.~2]{massarenti}, $\Aut(\moduli{1}{2}) \simeq 
\bbG_m^2$, is a $2$-dimensional torus.  By  Hilbert's Theorem 90,
$H^1(K, \bbG_m^2) = \{ 1 \}$, i.e., every torsor $T \to \Spec(K)$ is split
for every field extension $K/\bbC$.
Thus $\moduli{1}{2}$ has no non-trivial twisted forms:
${ }^T \moduli{1}{2} \simeq (\moduli{1}{2})_K$.  
Since $\moduli{1}{2}$ is rational over $\bbC$
(see~\cite[Proposition 2.1]{massarenti} or the references above), 
we conclude that every $K$-form of $\moduli{1}{2}$ is rational 
over $K$, for every field extension $K/\bbC$. 
\end{remark}

\begin{remark}
The conclusion of Theorem~\ref{thm.moduli} fails for
$\moduli{0}{4} \simeq \moduli{1}{1} \simeq \bbP^1$ (over $\bar{k}$).
Indeed, in general, a form of $\bbP^1$
(a $1$-dimensional Brauer-Severi variety) 
has no rational points.
\end{remark}

\begin{remark} If $g \geqslant 23$ then
$\moduli{g}{0}$ is not unirational,
and hence, neither is $\moduli{g}{n}$  
for any $n \geqslant 1$. For each $g \leqslant 22$, A. Logan~\cite{logan}
showed that there is an explicit function $f(g)$ such that
$\moduli{g}{n}$ is not unirational for any $n \geqslant f(g)$. Some
(but not all) of the spaces $\moduli{g}{n}$ with $n < f(g)$ 
have since been shown to be unirational;
for details and further references, see~\cite[Introduction]{bcf}.
Beyond the results of this section, we do not know which 
of the finitely many unirational 
spaces $\moduli{g}{n}$ have the property that
every form of $\moduli{g}{n}$ over every $K/k$ is also unirational.
In particular, we do not know if Theorem~\ref{thm.moduli} remains valid for
$\moduli{1}{10}$.
\end{remark}

\section{Homogeneous Spaces}
\label{sect.homogSpace}

\begin{prop} \label{prop:homogSpace}
Let $A$ be an algebraic group over $k$.
If $\Char(k) > 0$, assume that $A$ is reductive.
Suppose $G$ and $B$ are closed subgroups 
of $A$, and $X := A/B$ is 
geometrically irreducible.  Consider $X$ as 
a $G$-variety.  The following are equivalent:
\begin{enumerate}
\item $X$ is very versal,
\item $X$ is versal,
\item $X$ is weakly versal,
\item the image of the natural map $H^1(K, G) \to H^1(K, A)$ 
is contained in the image of the natural map $H^1(K, B) \to H^1(K, A)$ 
for every field extension $K/k$ where $K$ is infinite.
\end{enumerate}
\end{prop}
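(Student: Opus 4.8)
The plan is to prove the chain $(a)\Rightarrow(b)\Rightarrow(c)\Rightarrow(d)\Rightarrow(a)$. The first two implications are free: $(a)\Rightarrow(b)$ is the first implication in the hierarchy~\eqref{e.hierarchy}, and $(b)\Rightarrow(c)$ is the second. So the real content is $(c)\Rightarrow(d)$ and $(d)\Rightarrow(a)$; I expect the latter to be the main obstacle, since $(c)\Rightarrow(d)$ is essentially a translation exercise.

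For $(c)\Rightarrow(d)$: fix an infinite field extension $K/k$ and a class $\xi\in H^1(K,G)$; let $T\to\Spec(K)$ be a $G$-torsor representing $\xi$. By Theorem~\ref{thm1}\ref{thm1.a}, weak versality of $X=A/B$ gives $^TX(K)\neq\emptyset$. The point is to identify $^T(A/B)$ with a suitable twisted homogeneous space and to read off its $K$-points cohomologically. Concretely, pushing the $G$-torsor $T$ forward along $G\hookrightarrow A$ yields an $A$-torsor $T^A := T\times^G A$, whose class is the image of $\xi$ in $H^1(K,A)$; and twisting the coset space gives $^T(A/B)\cong {}^{T^A}(A/B)$ (with its residual action of the twisted group $^{T^A}\!B$ — here I would cite the twisting formalism of Section~\ref{sect.twisting1}, in particular Propositions~\ref{prop:twistedGroup} and~\ref{prop:TGaction}). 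The standard fact that $K$-points of a twisted homogeneous space $^{c}(A/B)$ correspond to elements of $\ker\big(H^1(K,{}^{c}B)\to H^1(K,{}^{c}A)\big)$, equivalently to a lift of the class $[c]\in H^1(K,A)$ to $H^1(K,B)$, then shows that $^TX(K)\neq\emptyset$ forces the image of $\xi$ in $H^1(K,A)$ to lie in the image of $H^1(K,B)\to H^1(K,A)$. Since $\xi$ was arbitrary, $(d)$ follows. (The reductivity hypothesis in positive characteristic guarantees that $A/B$ is nice enough — e.g. quasiprojective and separable — for these statements; I'd invoke the standard references.)

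For $(d)\Rightarrow(a)$: let $V$ be a generically free linear representation of $G$ with $k(V)^G$ infinite (Remark~\ref{rem.gf-rep}), and let $V_0\to\Spec F$ in the notation of Definition~\ref{def.vector-space}, so $S\to\Spec(F)$ is a $G$-torsor with class $\sigma\in H^1(F,G)$. By hypothesis $(d)$, the image of $\sigma$ in $H^1(F,A)$ lifts to a class $\tau\in H^1(F,B)$; choose a $B$-torsor $P\to\Spec(F)$ representing $\tau$. Then $P\times^B A$ is an $A$-torsor isomorphic to $S\times^G A$, which gives a $G$-equivariant $F$-point of the twisted space, i.e. (unwinding as above) an $F$-point of $^SX = {}^S(A/B)$. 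By Theorem~\ref{thm1}\ref{thm1.a} applied with the twisting pair $(S,F)$, this says $X$ is $(S,F)$-weakly versal — but I need more, namely very versality. Here I would instead invoke the $\impliedby$ direction of Theorem~\ref{thm1}\ref{thm1.c}: it suffices to show $^SX$ is $F$-unirational. The cleanest route: $A/B$ is $\bar k$-unirational because $A$ is (a linear algebraic group is rational over $\bar k$, or at least unirational, and $A\to A/B$ is a smooth surjection) — hence any twisted form $^SX$ is geometrically unirational. But geometric unirationality does not give unirationality over $F$ in general, so this is exactly where the argument needs care. The fix is to work with the $A$-torsor structure directly: since $^S(A/B)$ has an $F$-point $x_0$ coming from the lift $\tau$, and $^SA$ acts transitively on $^S(A/B)$ with stabilizer a form of $B$, we get an $^SA$-equivariant dominant map $^SA\to {}^S(A/B)$, $g\mapsto g\cdot x_0$; and $^SA$ is a form of $A$, hence $F$-unirational (forms of connected linear algebraic groups over an infinite field are unirational — here the reductivity/connectedness hypotheses and infiniteness of $F$ are used). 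Composing, $^SX$ is $F$-unirational, so by Theorem~\ref{thm1}\ref{thm1.c} the $G$-action on $X$ is very versal, giving $(a)$.

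The main obstacle, as flagged, is the last paragraph: getting \emph{very} versality rather than just weak versality out of $(d)$, which requires producing an actual dominant map from a linear representation (or equivalently $F$-unirationality of $^SX$) rather than merely a point. The key realization making it work is that the hypothesis $(d)$ supplies not just a point of $^SX$ but a reduction of structure group, hence a transitive action of the unirational group $^SA$ through that point; I would need to make sure the unirationality of $^SA$ is correctly sourced (this is where "$A$ reductive in positive characteristic" earns its keep, since connected reductive groups are rational, while a general connected linear group over an imperfect field need not be unirational).
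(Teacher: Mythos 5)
Your proof follows essentially the same route as the paper's. Both arguments rest on three ingredients: the identification of $K$-points of a twisted homogeneous space with lifts along $H^1(K,B)\to H^1(K,A)$ (the paper cites \cite[Proposition I.5.37]{serre-gc} for this, your ``standard fact''); the observation that a $K$-point of $^TX$ gives a dominant orbit map $^TA\to{}^TX$; and Chevalley's unirationality theorem for the twisted group. The paper organizes this differently — it fixes an arbitrary twisting pair $(T,K)$, proves the cyclic equivalence of five local conditions (existence of a $K$-point, of a dominant equivariant map $^TA\to{}^TX$, $K$-unirationality, density of $K$-points, and the cohomological lifting condition), and then reads off (a)--(d) via Theorem~\ref{thm1}; you instead run the cycle $(a)\Rightarrow(b)\Rightarrow(c)\Rightarrow(d)\Rightarrow(a)$. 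The content is the same, and the paper's formulation is cleaner only because it makes the ``for every twisting pair'' quantifier explicit throughout.

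Two small things to tighten. First, $A$ is not assumed connected, and Chevalley's theorem is a statement about \emph{connected} linear algebraic groups; you need to pass to $({}^SA)^\circ$ and observe that the composition $({}^SA)^\circ\to{}^SA\to{}^SX$ is still dominant because $^SX$ is geometrically irreducible — this is exactly the step the paper takes and you skip. (Relatedly, you write that connected reductive groups are \emph{rational}; they are not in general, but they are unirational over any field, which is what Chevalley gives and all that's needed.) Second, in invoking the $\impliedby$ direction of Theorem~\ref{thm1}\ref{thm1.c} after only verifying $F$-unirationality of $^SX$, you are implicitly relying on the fact that the paper's proof of that implication uses only the single pair $(S,F)$; either say so, or note that your lifting-plus-orbit-map-plus-Chevalley argument applies verbatim to any twisting pair $(T,K)$, which it does, so that the hypothesis of Theorem~\ref{thm1}\ref{thm1.c} is verified in full.
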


\begin{proof}
Let $(T, K)$ be a twisting pair.
In view of Proposition~\ref{prop:TGaction}, $^T X$ is 
a homogeneous space for the twisted group $^T A$. 

By Theorem~\ref{thm1} it suffices to show that the following conditions 
are equivalent:
\begin{enumerate}[label=(\roman*)]
\item $^T X$ has a $K$-point, \label{homog_a}
\item there exists a dominant $^T A$-equivariant 
map $f \colon  \, ^T A \to \, ^T X$ defined over $K$,\label{homog_b}
\item $^T X$ is $K$-unirational, \label{homog_c}
\item $K$-points are dense in $^T X$, \label{homog_d}
\item the class of $T$ lies in the image of the natural 
map $H^1(K, B) \to H^1(K, A)$. \label{homog_e}
\end{enumerate}

\smallskip
\ref{homog_a} $\Longrightarrow$ \ref{homog_b}:
By Proposition~\ref{prop:TGaction} 
$\, ^T A$ acts on $^T X$. If $p \in X(K)$, we can 
define $f$ to be the orbit map $f(g) = g \cdot p$.
Passing to a splitting field of $T$, we see that $f$ is dominant.

\ref{homog_b} $\Longrightarrow$ \ref{homog_c}:
Let $({}^TA)^{\circ}$ be the connected component of the identity in ${}^TA$.
Note that
the composition $({}^TA)^{\circ} \to {}^TA \to {}^TX$ is also dominant since
$^TX$ is geometrically irreducible.

We use a theorem of Chevalley~\cite[Theorem 18.2(ii)]{borel} which
asserts that a connected linear algebraic group is unirational
when either the group is reductive or the ground field is perfect.
Thus, under our assumptions on $A$,
$({}^TA)^{\circ}$ is unirational.

The implications \ref{homog_c} $\Longrightarrow$ \ref{homog_d} 
$\Longrightarrow$ \ref{homog_a} are obvious. 

\ref{homog_b} $\Longleftrightarrow$ \ref{homog_e} is proved in
\cite[Proposition I.5.37]{serre-gc}; see 
also~\cite[Proposition 1.11]{Springer}.
\end{proof}

\begin{remark} \label{rem.perfect} Note that the above argument fails if 
$\Char(k) > 0$ and we do not assume that $A$ is reductive, even if
$k$ is a perfect field. Indeed, in this case Chevalley's theorem tells us that
the group $A^0$ is unirational over $k$. However, the twisted group 
$({}^TA)^{\circ}$, defined over an (a priori arbitrary) extension $K/k$
may not be unirational over $K$, since $K$ may not be perfect.
\end{remark}

\begin{example} \label{ex.special} 
We record several interesting special cases of 
Proposition~\ref{prop:homogSpace} when $A$ is connected.

(a) Suppose $B = \{ 1 \}$.  Then the translation 
action of a subgroup $G$ on $A$ is versal if and only 
if the natural map $H^1(K, G) \to H^1(K, A)$ is trivial 
for every field extension $K/k$, where $K$ is infinite.
The same is true whenever $B$ is a special group, 
i.e., whenever $H^1(K, B)$ is trivial for every $K/k$. 

(b) Setting $B = G$ 
yields the following: For any closed subgroup $G \subset A$, 
the translation action of $G$ on $A/G$ is very versal. 

(c) If $B$ is the normalizer of a maximal torus in $A$,
we see that the translation action of $G$ on $A/B$ is 
very versal for any $G \subset A$. 
This is because the natural map $H^1(K, B) \to H^1(K, A)$ 
is surjective for every field extension $K/k$; 
see~\cite[III.4.3, Lemma 6]{serre-gc}
if $K$ is perfect and~\cite[Corollary 5.3]{cgr2} otherwise.
\end{example}

We may also apply Chevalley's theorem in cases where $G$ acts via
group automorphisms rather than by translations.

\begin{example} \label{ex.groups}
{\rm (}cf.~\cite[Proposition 3.3]{ckpr}{\rm )}
Let $H$ be a connected algebraic group. If
$\Char(k) > 0$, assume that $H$ is
reductive. Then every action of an algebraic group $G$ on
$H$ by group automorphisms is very versal.
\end{example}

\begin{proof}
Let $(T, K)$ be a twisting pair. By Proposition~\ref{prop:twistedGroup}, 
$^T H$ carries the structure of an affine algebraic group over $K$. By 
Chevalley's theorem~\cite[Theorem 18.2(ii)]{borel}
$^T H$ is unirational over $K$. The desired conclusion 
now follows from Theorem~\ref{thm1}\ref{thm1.c}.
\end{proof}

\section{$p$-versality}
\label{sect.p-versality}

Throughout this section, $p$ is a prime number.

\begin{defn}
Let $G/k$ be an algebraic group and let $X/k$ be
an irreducible $G$-variety.  
We say that $X$ is
\begin{itemize}
\item \emph{weakly $p$-versal} if for every twisting pair $(T, K)$, 
there exists a field extension $L/K$ of degree prime to $p$
and a $G$-equivariant $k$-morphism $T_L \to X$,

\item \emph{$p$-versal} if every $G$-invariant
dense open subvariety $U \subset X$ is weakly $p$-versal
(cf.~\cite[Section 2.2]{mED}).
\end{itemize}
\end{defn}

Recall that a field $L$ is called \emph{$p$-closed} if the degree 
of every finite field extension of $L$ is a power of $p$.
For every field $K$, there exists an algebraic extension $K^{(p)}/K$,
such that $K^{(p)}$ is $p$-closed and, for every finite subextension 
$K \subset K' \subset K^{(p)}$, the degree $[K' : K]$ is prime to $p$.
The field $K^{(p)}$ satisfying these conditions is unique up to
$K$-isomorphism; it usually called the $p$-{\em closure} 
of $K$ and is denoted by $K^{(p)}$. For details, 
see~\cite[Proposition 101.16]{EKM}.

\begin{lem} \label{lem:ptwistWeakVersal}
Let $X$ be a geometrically irreducible $G$-variety defined over $k$.
Then the following conditions are equivalent:

\begin{enumerate}
\item $X$ is weakly $p$-versal, \label{lem:ptwist(a)}
\item for every twisting pair $(T, K)$, $^T X$ has a point
whose degree over $K$ is prime to $p$,
\label{lem:ptwist(b)}
\item  for every twisting pair $(T, K)$, 
$^T X$ has a $0$-cycle whose degree is prime to $p$,
\label{lem:ptwist(c)}
\item  for every twisting pair $(T, K)$, the variety
$(^T X)_{K^{(p)}}$ has a $0$-cycle of degree $1$,
\label{lem:ptwist(d)}
\item  for every twisting pair $(T, K)$, the variety
$^T X$ has a $K^{(p)}$-point.
\label{lem:ptwist(e)}
\end{enumerate}
\end{lem}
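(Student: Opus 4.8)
The plan is to prove the chain of implications $\ref{lem:ptwist(a)} \Rightarrow \ref{lem:ptwist(b)} \Rightarrow \ref{lem:ptwist(c)} \Rightarrow \ref{lem:ptwist(d)} \Rightarrow \ref{lem:ptwist(e)} \Rightarrow \ref{lem:ptwist(a)}$, which I expect to be short once the right translations between the languages are in place. For $\ref{lem:ptwist(a)} \Rightarrow \ref{lem:ptwist(b)}$: given a twisting pair $(T,K)$ and a field extension $L/K$ of degree prime to $p$ together with a $G$-equivariant $k$-morphism $T_L \to X$, observe that $(T_L, L)$ is again a twisting pair (viewing $T_L = T \times_K \Spec(L)$ as the base change of the torsor) and that the enhanced version of Theorem~\ref{thm1}\ref{thm1.a} established in the text — ``$X$ is $(T_L,L)$-weakly versal iff ${}^{T_L} X$ has an $L$-point'' — gives a point of ${}^{T_L} X = ({}^T X)_L$ over $L$. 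Since $[L:K]$ is prime to $p$, this is exactly a closed point of ${}^T X$ whose residue field has degree prime to $p$ over $K$ (or a product of such; intersecting with an affine chart and taking an irreducible component of the fibre, one lands on a genuine point of degree prime to $p$). The implication $\ref{lem:ptwist(b)} \Rightarrow \ref{lem:ptwist(c)}$ is immediate, since a single closed point of degree prime to $p$ is a $0$-cycle of degree prime to $p$.

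For $\ref{lem:ptwist(c)} \Rightarrow \ref{lem:ptwist(d)}$: a $0$-cycle on ${}^T X$ of degree $d$ prime to $p$ pulls back to a $0$-cycle on $({}^T X)_{K^{(p)}}$, and every closed point of ${}^T X$ acquires, after base change to $K^{(p)}$, residue extensions all of $p$-power degree (by the defining property of $K^{(p)}$ as a $p$-closure); hence the pulled-back $0$-cycle has degree of the form $d \cdot (\text{power of } p)$ contributions, and taking a suitable integer combination — using that $d$ is a unit modulo every power of $p$ — produces a $0$-cycle of degree exactly $1$. (Concretely: if $z = \sum n_i [x_i]$ and over $K^{(p)}$ each $x_i$ splits into points of degree $p^{e_i}$, one forms $\sum n_i' [\cdot]$ with integers $n_i'$ chosen so the total is $1$, which is possible because $\gcd$ of the degrees appearing is prime to $p$... more carefully, one uses that $({}^TX)_{K^{(p)}}$ already has a closed point of $p$-power degree from each $x_i$, and a point of degree $d$ prime to $p$ from combining, then applies the standard fact that having $0$-cycles of coprime degrees yields a $0$-cycle of degree $1$.) The implication $\ref{lem:ptwist(d)} \Rightarrow \ref{lem:ptwist(e)}$ uses that $K^{(p)}$ is $p$-closed: any closed point of $({}^TX)_{K^{(p)}}$ has $p$-power degree, so a $0$-cycle of degree $1$ forces at least one such point of degree $p^0 = 1$, i.e.\ a $K^{(p)}$-point.

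The implication $\ref{lem:ptwist(e)} \Rightarrow \ref{lem:ptwist(a)}$ is the one requiring the most care, because one must descend a $K^{(p)}$-point to a point over a \emph{finite} subextension and simultaneously convert it into a $G$-equivariant morphism from a torsor. Given a twisting pair $(T,K)$ and a $K^{(p)}$-point of ${}^T X$, first note $K^{(p)} = \bigcup L$ over finite subextensions $K \subset L \subset K^{(p)}$, each with $[L:K]$ prime to $p$; since ${}^T X$ is of finite type, the $K^{(p)}$-point is already defined over some such finite $L$, giving a point of $({}^T X)_L = {}^{T_L}X$ over $L$. Then $(T_L, L)$ is a twisting pair and, by the enhanced Theorem~\ref{thm1}\ref{thm1.a} again (or Proposition~\ref{prop:twistAdjunction} with $Y = \Spec(L)$), this $L$-point corresponds to a $G$-equivariant $k$-morphism $T_L \to X$, which is precisely what weak $p$-versality demands. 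The main obstacle throughout is bookkeeping: making sure ``$0$-cycle of degree prime to $p$'' genuinely upgrades to ``$0$-cycle of degree $1$ over $K^{(p)}$'' (the coprime-degrees argument) and that base-changing the torsor $T$ along a finite extension of its base field is compatible with the twisting and adjunction formalism of Section~\ref{sect.twisting1} — but these are all routine given the machinery already developed.
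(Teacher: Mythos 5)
Your proposal is correct and follows essentially the same route as the paper: translate $L$-points of ${}^T X$ into $G$-equivariant morphisms $T_L \to X$ via the adjunction of Proposition~\ref{prop:twistAdjunction} (equivalently, the $(T,K)$-refined form of Theorem~\ref{thm1}\ref{thm1.a}), pull back a $0$-cycle of degree prime to $p$ to $K^{(p)}$ and combine it with a closed point of $p$-power degree to get degree $1$, observe that degree $1$ forces a rational point when all closed points have $p$-power degree, and descend a $K^{(p)}$-point to a finite subextension of degree prime to $p$. The only difference is cosmetic — the paper closes the cycle with \ref{lem:ptwist(e)}$\implies$\ref{lem:ptwist(b)} after proving \ref{lem:ptwist(a)}$\iff$\ref{lem:ptwist(b)} as a biconditional, while you go \ref{lem:ptwist(e)}$\implies$\ref{lem:ptwist(a)} directly; the middle steps, though stated more hesitantly than necessary in your \ref{lem:ptwist(c)}$\implies$\ref{lem:ptwist(d)}, land on the same coprime-degrees argument the paper uses.
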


\begin{proof} \ref{lem:ptwist(a)} $\Longleftrightarrow$ \ref{lem:ptwist(b)}:
By Proposition~\ref{prop:twistAdjunction},
the existence of an $L$-point of $^T X$ is equivalent to the existence
of a
$G$-equivariant $k$-morphism $T_L \to X$.

\ref{lem:ptwist(b)} $\implies$ \ref{lem:ptwist(c)} is obvious.

\ref{lem:ptwist(c)} $\implies$ \ref{lem:ptwist(d)}: Suppose
$Z \subset {}^T X$ is a $0$-cycle of degree $d$, where $d$ is prime to $p$.
Since the degree of every closed point of 
$(^T X)_{K^{(p)}}$ is a power of $p$, there exists a $0$-cycle
$Z' \subset (^T X)_{K^{(p)}}$ whose degree is a power of $p$.
A desired $0$-cycle of degree $1$ on $(^T X)_{K^{(p)}}$ can 
then be constructed as an integer linear combination of $Z$ and $Z'$.

\ref{lem:ptwist(d)} $\implies$ \ref{lem:ptwist(e)}: 
(cf. \cite[Example 13.1]{fulton}) This is immediate 
from the fact that the degree of every closed point on 
$(^T X)_{K^{(p)}}$ is a power of $p$.

\ref{lem:ptwist(e)} $\implies$ \ref{lem:ptwist(b)}: Every 
$K^{(p)}$-point of $^T X$ descends to a finitely generated 
subextension $K \subset L \subset K^{(p)}$. The field $L$ is
then a finite extension of $K$ whose degree is prime to $p$.
\end{proof} 

\begin{thm} \label{thm.p-versal} Let $G$ be an algebraic group acting on
a smooth geometrically irreducible $k$-variety $X$. 
Then $X$ is $p$-versal if and only if $X$ is weakly $p$-versal.
\end{thm}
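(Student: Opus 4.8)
The plan is to prove the nontrivial direction: weakly $p$-versal implies $p$-versal, for $X$ smooth and geometrically irreducible. The trivial direction is that $p$-versal always implies weakly $p$-versal (take $U = X$). So assume $X$ is weakly $p$-versal, let $U \subset X$ be a dense $G$-invariant open subvariety, and let $(T,K)$ be a twisting pair; I must show that $^T U$ acquires a point of degree prime to $p$ over $K$ — equivalently, by Lemma~\ref{lem:ptwistWeakVersal}, a $K^{(p)}$-point. Passing to the $p$-closure, it is harmless to replace $K$ by $K^{(p)}$ and thus assume that $K$ is $p$-closed and try to produce an honest $K$-point of $^T U$, knowing only that $^T X$ has a $K$-point (since $^T X(K^{(p)}) \neq \emptyset$ by weak $p$-versality applied to all of $X$). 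The obstacle is exactly the one flagged in Example~\ref{ex.motivation}: a single $K$-point of $^T X$ need not land in the dense open $^T U$, and there is no reason a priori that $^T X$ has enough $K$-points to reach $^T U$.

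The idea to overcome this is the ``$p$-closure trick'' combined with smoothness: over a $p$-closed field, a smooth variety with a rational point has a dense set of rational points. More precisely, I would argue as follows. Twist the inclusion $Z := X \setminus U \hookrightarrow X$; then $^T Z \subsetneq \, ^T X$ is a proper closed $^TG$-invariant subvariety (using Corollary~\ref{cor:twistsPreserve} and Proposition~\ref{prop:TGaction}), and $^T U = \, ^T X \setminus \, ^T Z$. I want a $K$-point of $^T X$ not lying on $^T Z$. Here is where the argument mirrors the proof of Theorem~\ref{thm1}(b): use the auxiliary torsor $S \to \Spec(F)$ attached to a generically free representation $V$ as in Definition~\ref{def.vector-space}. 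Since $X$ is weakly $p$-versal, it is in particular weakly $p$-versal with respect to $(S,F)$, so there is a degree-prime-to-$p$ extension $F'/F$ and a $G$-equivariant $k$-morphism $S_{F'} \to X$; equivalently a $G$-equivariant rational $k$-map $V \dasharrow X$ defined after a prime-to-$p$ base change. Composing with a generic translate and using smoothness to spread out, one can arrange that the image of this rational map is not contained in $Z$. The composition with a $G$-equivariant morphism $T_L \to V$ (which exists over an appropriate prime-to-$p$ extension $L$ by Lemma~\ref{lem.Hilbert90}, since $^T V \simeq V_K$ has dense $K$-points) then produces, after possibly enlarging $L$ by a further prime-to-$p$ factor to avoid the bad locus, a $G$-equivariant morphism $T_L \to U$. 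By Lemma~\ref{lem:ptwistWeakVersal} this shows $^T U$ has a point of degree prime to $p$, i.e., $U$ is weakly $p$-versal.

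The delicate point — and the main obstacle — is controlling degrees: each step introduces a prime-to-$p$ field extension, and I must check these compose to a prime-to-$p$ extension and that the final morphism genuinely avoids $^T Z$. This is where smoothness of $X$ is essential: over the $p$-closure $K^{(p)}$, a smooth $K^{(p)}$-variety with one rational point has a $K^{(p)}$-rational point in every nonempty open subvariety is \emph{false} in general, so I cannot simply invoke density; instead I must genuinely run the two-step factorization through $V$ as above, exploiting that the bad locus pulls back to a proper closed subvariety of $V$ and that $K$-points of $V_K$ are Zariski dense by Lemma~\ref{lem.Hilbert90}. The cleanest formulation is: after replacing $K$ by $K^{(p)}$, weak $p$-versality gives a $G$-equivariant $f_2 \colon V \dasharrow X$ (defined over $k$, after extending scalars by a prime-to-$p$ amount which is absorbed into $K^{(p)}$) whose image meets $U$; restricting $f_2$ to the open locus where it is regular and lands in $U$, and precomposing with a $G$-equivariant $f_1 \colon T_L \to V$ hitting that locus — possible by Lemma~\ref{lem:arithVersalEquiv} applied to $V$, whose twist has dense rational points — yields the required morphism $T_L \to U$ over a prime-to-$p$ extension $L/K$.

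Finally I would remark that the smoothness hypothesis enters only through the fact that $p$-closed fields behave well with respect to rational points on smooth varieties: in the key step one needs that a $K^{(p)}$-point of the smooth locus can be perturbed, or more precisely that $K^{(p)}$-points of $^T V \simeq V_{K^{(p)}}$ are dense (which is automatic, as $V$ is an affine space) and that the generic fiber behavior of $f_2$ is controlled. I expect the write-up to be short once the analogy with the proof of Theorem~\ref{thm1}(b) is made precise; the bookkeeping of prime-to-$p$ extensions is the only genuinely new ingredient beyond what has already been established.
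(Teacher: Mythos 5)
Your proposal identifies the crux of the problem correctly (one must get a point of $^T U$, not just $^T X$, over a prime-to-$p$ extension), but the route you take does not close, and along the way you reject an approach that actually works.

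First, the rejection. You write that ``a smooth $K^{(p)}$-variety with one rational point has a $K^{(p)}$-rational point in every nonempty open subvariety is \emph{false} in general,'' and conclude you cannot invoke density. This is incorrect: $p$-closed fields are \emph{ample} (large), so a smooth geometrically irreducible variety over $K^{(p)}$ with a $K^{(p)}$-point has Zariski-dense $K^{(p)}$-points. This is precisely the content of Remark~\ref{remark.moving-lemma} (Colliot-Th\'el\`ene for $K^{(p)}$ perfect, Jarden in general). Since $X$ is smooth and geometrically irreducible, so is $^T X$, and the single $K^{(p)}$-point furnished by weak $p$-versality already produces, by ampleness, a $K^{(p)}$-point in the dense open $^T U$. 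Had you kept this idea rather than discarding it, you would have had a complete and short argument.

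Second, the alternative you pursue instead --- factoring through the torsor $S \to \Spec(F)$ attached to a generically free $V$ as in the proof of Theorem~\ref{thm1}\ref{thm1.b} --- has a genuine gap. In that proof, the auxiliary map $f_2 \colon V \dasharrow X$ with image meeting the complement of $Y$ was produced using Lemma~\ref{lem:canHitAnything}, which in turn used that $X$ is \emph{$(S,F)$-versal}, i.e.\ that \emph{every} dense $G$-invariant open admits a map from $S$. Here the hypothesis is only that $X$ itself is weakly $p$-versal, which yields a $G$-equivariant morphism $S_{F'} \to X$ (for some prime-to-$p$ extension $F'/F$) but gives no control whatsoever on where its image lands. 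Your claim that ``composing with a generic translate and using smoothness to spread out, one can arrange that the image of this rational map is not contained in $Z$'' is unsubstantiated: since $Z$ is $G$-invariant, translating by elements of $G$ cannot move the image off $Z$, and smoothness alone does not supply any mechanism for doing so within this framework. To justify that step you would essentially have to prove the theorem you are in the middle of proving.

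The paper's own proof is shorter and avoids both issues by changing the currency from points to $0$-cycles. By Lemma~\ref{lem:ptwistWeakVersal}, weak $p$-versality is equivalent to $^T X$ having a $0$-cycle of degree prime to $p$ for every twisting pair, and the same characterization applies to $U$. Since $^T U$ is a dense open subvariety of the smooth variety $^T X$ (Corollary~\ref{cor:twistsPreserve}\ref{cor:twistsPreserve:top}), Chow's Moving Lemma for $0$-cycles lets one move the $0$-cycle into $^T U$ while preserving its degree, and one is done. This is where smoothness is used, and it is the key idea missing from your proposal.
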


\begin{proof} We will assume that $X$ is weakly $p$-versal and prove that
$X$ is $p$-versal; the other direction is obvious.

Let $U \subset X$ be a $G$-invariant dense open subvariety.  
We want to show that
$U$ is weakly versal. Let $(T, K)$ be a twisting pair. By 
Lemma~\ref{lem:ptwistWeakVersal}, it suffices to prove 
that if $^T X$ has a $0$-cycle 
whose degree is prime to $p$, then so does $^T U$. Since
$^T U$ is a dense open subvariety of $^T X$
(see Corollary~\ref{cor:twistsPreserve}(b)),
this is a special case of Chow's Moving Lemma~\cite[Section 11.4]{fulton}.
\end{proof}

\begin{remark} The above argument only requires  
Chow's Moving Lemma for $0$-cycles. This special case of
the Moving Lemma is quite elementary: to prove it, one
constructs a smooth curve passing through
a given $0$-cycle, then moves the $0$-cycle along this curve.
\end{remark}

\begin{remark} \label{remark.moving-lemma}
We thank an anonymous referee for pointing out that
Theorem~\ref{thm.p-versal} is closely related 
to the fact that every $p$-closed field $L$ is
``ample'' (or ``large''). That is, if an irreducible $L$-variety
has a smooth $L$-point, then the $L$-points are Zariski dense.
This property of $p$-closed fields was noticed by 
J.-L.~Colliot-Th{\'e}l{\`e}ne~\cite{CTample} in the case where
$L$ is perfect. In the case where $L$ is not perfect it 
was proved by M.~Jarden~\cite{jarden}.
\end{remark}

\begin{cor} \label{cor:GversalIsGpversal}
Let $X/k$ be a geometrically irreducible generically smooth
$G$-variety.
\begin{enumerate}
\item Assume that $G$ has a closed subgroup $H$ whose index is 
finite and prime to $p$. Then the $G$-action on $X$ is $p$-versal 
if and only if the restricted $H$-action is $p$-versal.

\item Suppose there exists a smooth $k$-point $x \in X(k)$ 
such that the orbit $G \cdot x$ is finite and $\deg([G \cdot x])$
is prime to $p$. Then the $G$-action on $X$ is $p$-versal.
\end{enumerate}
\end{cor}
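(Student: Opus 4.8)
The plan is to reduce both parts to Theorem~\ref{thm.p-versal} and Lemma~\ref{lem:ptwistWeakVersal}, so that it suffices to work with weak $p$-versality; since $X$ is only generically smooth, I would first pass to a dense $G$-invariant open subvariety $X_0 \subseteq X$ which is smooth, observing that by definition $p$-versality of $X$ is equivalent to $p$-versality of $X_0$ (every $G$-invariant dense open of $X$ contains a $G$-invariant dense open of $X_0$ and vice versa), and by Theorem~\ref{thm.p-versal} the latter is equivalent to weak $p$-versality of $X_0$. So throughout I may assume $X$ is smooth and geometrically irreducible.

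For part (a): the restriction of a $p$-versal $G$-action to $H$ being $p$-versal is the easy direction — any $H$-torsor $T \to \Spec(K)$ can be induced up to the $G$-torsor $T \times^H G \to \Spec(K)$, a $G$-equivariant $k$-morphism from the latter to $X$ restricts to an $H$-equivariant morphism $T \to X$ after identifying $T$ with a closed subscheme of $T \times^H G$, and the prime-to-$p$ base change is carried along; I would phrase this via the adjunction of Proposition~\ref{prop:twistAdjunction} if convenient but it is elementary either way. For the converse, suppose the $H$-action is $p$-versal, hence (by the reduction above and Theorem~\ref{thm.p-versal}) weakly $p$-versal, and let $(T,K)$ be a $G$-twisting pair. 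The key step is to find a prime-to-$p$ extension of $K$ over which $T$ reduces to an $H$-torsor: concretely, the $G$-torsor $T \to \Spec(K)$ gives a point of $(G/H)$ twisted by $T$ — more precisely, since $[G:H]$ is finite and prime to $p$, the twisted form ${}^T(G/H)$ is a finite $K$-scheme of degree $[G:H]$ prime to $p$, so it has a closed point of degree prime to $p$, giving a prime-to-$p$ extension $K'/K$ and a $K'$-point of ${}^T(G/H)$, which is exactly a reduction of structure group of $T_{K'}$ to an $H$-torsor $T' \to \Spec(K')$ with $T' \times^H G \cong T_{K'}$. Now weak $p$-versality of the $H$-action applied to $(T', K')$ gives a further prime-to-$p$ extension $L/K'$ and an $H$-equivariant $k$-morphism $T'_L \to X$; inducing up gives a $G$-equivariant $k$-morphism $T_L = (T'_L) \times^H G \to X$, and $L/K$ has degree prime to $p$. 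By Lemma~\ref{lem:ptwistWeakVersal}\ref{lem:ptwist(a)}$\Leftrightarrow$\ref{lem:ptwist(b)} (applied to the smooth model, then Theorem~\ref{thm.p-versal}) this shows the $G$-action is $p$-versal. I expect the main obstacle here to be the bookkeeping around the reduction-of-structure-group step: identifying ${}^T(G/H)$ as the correct parameter space for reductions of $T$ to $H$, and checking its degree is $[G:H]$ so that a prime-to-$p$ point exists.

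For part (b): by Theorem~\ref{thm.p-versal} (after the reduction to a smooth model as above) and Lemma~\ref{lem:ptwistWeakVersal}, it suffices to show that for every $G$-twisting pair $(T,K)$ the twisted variety ${}^T X$ carries a $0$-cycle of degree prime to $p$. The smooth $k$-point $x$ with finite orbit $G \cdot x$ of prime-to-$p$ degree gives a $G$-invariant closed subvariety $G \cdot x \subset X$ which is itself a finite $k$-scheme; twisting the closed immersion $G \cdot x \hookrightarrow X$ (Corollary~\ref{cor:twistsPreserve}\ref{cor:twistsPreserve:top}) yields a closed immersion ${}^T(G\cdot x) \hookrightarrow {}^T X$, and ${}^T(G \cdot x)$ is a finite $K$-scheme of the same degree $\deg([G \cdot x])$, which is prime to $p$. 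Hence its fundamental $0$-cycle pushes forward to a $0$-cycle of degree prime to $p$ on ${}^T X$, as required. One small point to verify is that the $G \cdot x$ used must lie in the smooth locus so that the reduction to the smooth model does not lose it; since $x$ is a smooth point and $G$ acts by automorphisms, every point of $G \cdot x$ is smooth, so $G \cdot x$ already lies in any $G$-invariant smooth dense open containing it — equivalently one simply takes the smooth model $X_0$ to contain $G\cdot x$, which is possible as $G \cdot x$ is contained in the smooth locus. This direction is straightforward and I do not anticipate a real obstacle beyond this compatibility check.
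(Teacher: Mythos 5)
Your proof is correct, and while it ends up in the same place as the paper's (reduce to the smooth locus, invoke Theorem~\ref{thm.p-versal} to pass to weak $p$-versality, and then work with Lemma~\ref{lem:ptwistWeakVersal}), it differs from the paper's argument in two substantive and worthwhile ways.

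For part (a), the paper simply cites the $p$-surjectivity of $H^1(K,H) \to H^1(K,G)$ from \cite[Lemma 4.1]{mr1} and then specializes to $p$-closed fields. You instead prove the needed reduction-of-structure-group fact from scratch: ${}^T(G/H)$ is a finite $K$-scheme of degree $[G:H]$ prime to $p$, hence has a closed point of prime-to-$p$ degree, and a $K'$-point of ${}^T(G/H)$ corresponds (via the adjunction of Proposition~\ref{prop:twistAdjunction}, or equivalently via \cite[Proposition I.5.37]{serre-gc}) to an $H$-torsor $T'$ over $K'$ with $T'\times^H G \cong T_{K'}$. This is self-contained, stays entirely in the language of twisting already developed in the paper, and makes the role of the hypothesis ``$[G:H]$ prime to $p$'' transparent. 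The induction step ($H$-equivariant $T'_L \to X$ gives $G$-equivariant $T_L \cong T'_L \times^H G \to X$, and conversely by restricting along the $H$-equivariant inclusion $T \hookrightarrow T\times^H G$) is exactly right.

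For part (b), the paper reduces to part (a): it takes $H = \Stab_G(x)$, observes that $x$ is $H$-fixed so the $H$-action is weakly versal by Proposition~\ref{prop.weakly-versal}, hence $p$-versal by Theorem~\ref{thm.p-versal}, and then applies part (a) since $[G:H]=\deg([G\cdot x])$ is prime to $p$. You instead argue directly: twist the $G$-equivariant closed immersion $G\cdot x \hookrightarrow X$ (Corollary~\ref{cor:twistsPreserve}\ref{cor:twistsPreserve:top}) to get a finite closed subscheme ${}^T(G\cdot x) \subset {}^T X$ of the same degree, whose fundamental $0$-cycle has degree prime to $p$; then conclude by Lemma~\ref{lem:ptwistWeakVersal}\ref{lem:ptwist(c)} and Theorem~\ref{thm.p-versal}. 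Your route is independent of part (a) and arguably more economical, since it exhibits the prime-to-$p$ $0$-cycle directly rather than detouring through a subgroup and a weak-versality argument. Your care in checking that $G\cdot x$ lies in the smooth locus (so that passing to a smooth model does not lose it) is exactly the right compatibility to verify. Both arguments are valid; yours trades a small amount of extra direct verification for better modularity and fewer external dependencies.
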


\begin{proof} After replacing $X$ by its smooth locus, 
we may assume that $X$ is smooth.  

(a) From the proof of~\cite[Lemma 4.1]{mr1},
for any field $K/k$, the map $H^1(K,H) \to H^1(K,G)$ is $p$-surjective.
That is, for any $\alpha \in H^1(K,G)$ there exists a finite 
extension $L/K$ of degree prime to $p$ such that $\alpha_L$ 
lies in the image of the natural map $H^1(L,H) \to H^1(L,G)$.  
If $K$ is $p$-closed, then $[L: K]$ is a power of $p$, so
$L = K$, and the map $H^1(K,H) \to H^1(K,G)$ is
surjective.  In other words, for any $H$-torsor $T \to \Spec(K)$, 
there exists a $G$-torsor $T' \to \Spec(K)$ such that $^TX$ 
and $^{T'}X$ become isomorphic over $K^{(p)}$. In particular,
$^T X$ has a $K^{(p)}$-point if and only if $^{T'} X$ has a $K^{(p)}$-point.
Lemma~\ref{lem:ptwistWeakVersal}
now tells us that the $G$-action on $X$ is weakly $p$-versal if and only
the $H$-action is weakly $p$-versal.
By Theorem~\ref{thm.p-versal}, the same 
is true if ``weakly $p$-versal'' is replaced by ``$p$-versal''. 

(b) Let $H$ be the stabilizer of $x$ in $G$. Then $x$ is 
fixed by $H$, and the
index $[G:H] = \deg([G \cdot x])$ is finite and prime to $p$. 
By Proposition~\ref{prop.weakly-versal},
the $H$-action on $X$ is weakly versal. 
By Theorem~\ref{thm.p-versal}, the $H$-action on $X$ is $p$-versal.
By part (a) the $G$-action on $X$ is $p$-versal as well.
\end{proof}

We also note the following immediate consequence
of Theorem~\ref{thm.p-versal} and Lemma~\ref{lem:ptwistWeakVersal},  
in the spirit of Theorem~\ref{thm1}.

\begin{cor} \label{cor.zero-cycle}
A $G$-action on a smooth geometrically irreducible variety $X$ is
$p$-versal for every prime $p$ if and only if,
for every twisting pair $(T, K)$, ${}^T X$ 
has a $0$-cycle of degree $1$.
\qed
\end{cor}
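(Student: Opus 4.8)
The plan is to deduce Corollary~\ref{cor.zero-cycle} directly by combining Theorem~\ref{thm.p-versal} with the characterizations in Lemma~\ref{lem:ptwistWeakVersal}, treating the two implications separately. For the direction ($\Longleftarrow$), suppose that for every twisting pair $(T, K)$ the variety $^T X$ carries a $0$-cycle of degree $1$. Fix a prime $p$. Then in particular $^T X$ has a $0$-cycle of degree $1$, which is a fortiori a $0$-cycle of degree prime to $p$, so condition~\ref{lem:ptwist(c)} of Lemma~\ref{lem:ptwistWeakVersal} holds; hence $X$ is weakly $p$-versal, and by Theorem~\ref{thm.p-versal} (using that $X$ is smooth and geometrically irreducible) $X$ is $p$-versal. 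Since $p$ was arbitrary, $X$ is $p$-versal for every prime $p$.

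For the direction ($\Longrightarrow$), suppose $X$ is $p$-versal for every prime $p$. Fix a twisting pair $(T, K)$. For each prime $p$, $p$-versality implies weak $p$-versality, so by Lemma~\ref{lem:ptwistWeakVersal}\ref{lem:ptwist(c)} the variety $^T X$ has a $0$-cycle $Z_p$ of degree $d_p$ with $d_p$ prime to $p$. Let $d = \gcd_p d_p$ over all primes $p$; since for each prime $p$ we have $p \nmid d_p$, it follows that $p \nmid d$ for every prime $p$, so $d = 1$. Thus some $\bbZ$-linear combination $Z = \sum_p a_p Z_p$ (a finite sum, since all but finitely many $d_p$ already equal a fixed value, or simply: choose finitely many primes witnessing that the gcd is $1$) is a $0$-cycle on $^T X$ of degree exactly $1$. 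Since $(T, K)$ was arbitrary, this completes the proof.

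The argument is essentially a bookkeeping exercise once Theorem~\ref{thm.p-versal} is in hand, so there is no serious obstacle; the only point requiring a moment's care is the passage from ``degree prime to $p$ for every $p$'' to ``degree $1$,'' which is the standard fact that an integer not divisible by any prime must be $\pm 1$, combined with the observation that a $\bbZ$-linear combination of effective $0$-cycles is a (not necessarily effective) $0$-cycle whose degree is the corresponding linear combination of the individual degrees. It is worth noting explicitly that the smoothness hypothesis on $X$ enters only through Theorem~\ref{thm.p-versal} (and thus ultimately through Chow's Moving Lemma for $0$-cycles), and that geometric irreducibility is needed so that Lemma~\ref{lem:ptwistWeakVersal} applies to each twist $^T X$.
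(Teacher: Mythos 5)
Your proof is correct and follows exactly the route the paper indicates: the corollary is stated as an immediate consequence of Theorem~\ref{thm.p-versal} and Lemma~\ref{lem:ptwistWeakVersal}, and you supply precisely that deduction, using condition~\ref{lem:ptwist(c)} of the lemma in both directions and patching the degrees together via a B\'ezout-type argument. The one blemish is the parenthetical claim that ``all but finitely many $d_p$ already equal a fixed value,'' which is neither true in general nor needed; your alternative justification — pick finitely many primes whose $d_p$ already have gcd $1$, which exist because the gcd of the whole collection is $1$ (equivalently, the subgroup of $\bbZ$ generated by the degrees of $0$-cycles on $^T X$ is all of $\bbZ$) — is the right way to finish.
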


Every versal $G$-variety is clearly $p$-versal for every prime $p$. 
However, the converse is not true in general, even if $G= \{ 1 \}$;
after all, there exist $k$-varieties with $0$-cycles
of degree $1$ but no $k$-points. 
On the other hand, no counterexample is known for the following 
weaker statement:

\begin{conj}[cf.~\cite{duncan2}]
\label{conj:GpVersalEnough}
Let $G$ be a finite constant group, $X$ be a $G$-variety and
$G_p$ be a Sylow $p$-subgroup of $G$.
If $X$ is $G_p$-versal for every prime $p$, then $X$ is $G$-versal.
\end{conj}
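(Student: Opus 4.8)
The plan is to reduce the general finite group statement to the $p$-local statements by a transfer/corestriction argument on twisting pairs. Fix a twisting pair $(T,K)$ for $G$, so $T$ corresponds to a class $\tau \in H^1(K,G)$. For each prime $p$ dividing $|G|$, restriction along $G_p \hookrightarrow G$ need not hit $\tau$ directly, but after passing to the $p$-closure $K^{(p)}$ one can use the fact that $[G:G_p]$ is prime to $p$ together with a corestriction-type argument (as in the proof of Corollary~\ref{cor:GversalIsGpversal}(a), which invokes \cite[Lemma 4.1]{mr1}) to conclude that $\tau_{K^{(p)}}$ lies in the image of $H^1(K^{(p)}, G_p) \to H^1(K^{(p)}, G)$. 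Concretely, $\tau_{K^{(p)}}$ is represented by a $G_p$-torsor $T_p \to \Spec(K^{(p)})$, and since $G_p$-versality of $X$ (via Theorem~\ref{thm1}\ref{thm1.b}) gives density of $K^{(p)}$-points in $^{T_p}\!X = (^T X)_{K^{(p)}}$, in particular $^T X$ acquires a $K^{(p)}$-point, hence (descending to a finite subextension of degree a power of $p$) a closed point of $p$-power degree over $K$.

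Doing this for every prime $p \mid |G|$ produces, for each such $p$, a closed point of $^T X$ of degree a power of $p$. Taking an integer linear combination of the corresponding $0$-cycles — exactly as in the proof of Lemma~\ref{lem:ptwistWeakVersal}, steps \ref{lem:ptwist(c)} $\Rightarrow$ \ref{lem:ptwist(d)} — yields a $0$-cycle of degree $1$ on $^T X$. So far this only gives that $^T X$ has a $0$-cycle of degree $1$ for every twisting pair $(T,K)$, equivalently (Corollary~\ref{cor.zero-cycle}, using smoothness) that $X$ is $p$-versal for every prime $p$ in the ``$\{1\}$-sense'' — but this is \emph{not} enough to conclude $G$-versality, since a variety can have a $0$-cycle of degree $1$ without a rational point. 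This is precisely the gap that makes the statement a conjecture rather than a theorem, so the honest move is to present the above as the natural partial result and then flag the obstruction.

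Thus the realistic ``proof proposal'' has two layers. The first layer, which genuinely works, is: \emph{$X$ is $G_p$-versal for all $p$ $\implies$ for every twisting pair $(T,K)$ the twisted variety $^T X$ has a $0$-cycle of degree $1$ $\implies$ $X$ is $p$-versal for every prime $p$} (using Theorem~\ref{thm.p-versal} and the Chow moving lemma to pass from $X$ to open subvarieties). The second layer — upgrading ``$0$-cycle of degree $1$'' to ``$K$-point'' for every twist — is the main obstacle and is equivalent to a well-known hard open problem about rational points; it is known only in special cases (e.g.\ when $G$ is a $p$-group for a single $p$, or when the twisted forms are curves of genus $0$ or conics, where the Hasse--Minkowski or Springer-type theorems apply). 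So I would write the argument carefully through the first layer, invoking the $p$-surjectivity of $H^1(K,G_p) \to H^1(K,G)$ together with Theorem~\ref{thm1}\ref{thm1.b}, Lemma~\ref{lem:ptwistWeakVersal}, Theorem~\ref{thm.p-versal} and Corollary~\ref{cor.zero-cycle}, and then explain that the remaining implication is exactly the content that cannot presently be established — which is why this is stated as Conjecture~\ref{conj:GpVersalEnough} rather than proved.

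The step I expect to be the true obstacle is the last one: deducing a genuine $K$-rational point on $^T X$ (for \emph{every} twisting pair, including those where $K$ is a ``small'' field like a number field) from the mere existence of a degree-one $0$-cycle. No general mechanism for this is known; any proof would have to exploit finer structure of $X$ (smoothness plus rational connectedness, or the specific geometry coming from the $G$-action) that is simply unavailable at the level of generality of the conjecture. Everything upstream of that — the transfer argument moving a $G$-torsor into the image of a Sylow subgroup after passing to $K^{(p)}$, and the linear-combination trick on $0$-cycles — is routine given the tools already developed in the paper.
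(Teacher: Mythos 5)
This statement is labelled a conjecture in the paper and the authors give no proof; there is therefore nothing to compare your argument against, and your recognition of that fact is correct. You have also correctly isolated the obstruction: at this level of generality one cannot upgrade a degree-one $0$-cycle on ${}^T X$ to a genuine $K$-rational point, and that is exactly why the statement remains open. Your ``first layer'' is sound and is in fact already implicit in Corollary~\ref{cor:GversalIsGpversal}(a) together with Corollary~\ref{cor.zero-cycle}: $G_p$-versality certainly implies $G_p$-$p$-versality, hence (by the prime-to-$p$ index reduction) $G$-$p$-versality, for every $p$; and for a smooth geometrically irreducible $X$ this yields a degree-one $0$-cycle on every twist ${}^T X$. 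The paper's only commentary on the conjecture is the remark immediately after it, which points to precisely the gap you describe: the hypothesis must be full $G_p$-versality and not merely $p$-versality as a $G_p$-variety, because $p$-versal data alone can only detect $0$-cycles, not rational points.

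One small technical slip in your write-up: a $K^{(p)}$-point of ${}^T X$ descends to a finite subextension $L$ of $K^{(p)}/K$, and every such $L/K$ has degree \emph{prime to} $p$, not a power of $p$. Hence the closed points you produce for the various primes $p$ have degrees prime to $p$ rather than $p$-power degrees, and the ``take a $\bbZ$-linear combination of $0$-cycles'' step needs slightly more care (for instance, one can note in addition that $T$ splits over an extension of $K$ whose degree divides a power of $|G|$, so all relevant degrees can be taken to have prime factors only among the primes dividing $|G|$, after which the gcd is visibly $1$). This does not affect your correct overall assessment that the substance of the conjecture lies beyond what the paper's tools establish.
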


Note that the key assumption here is that $X$ is versal and not just
$p$-versal as a $G_p$-variety.

\begin{remark}
It is natural to define a $G$-variety $X$ to be ``very $p$-versal'' 
if there exists a linear representation $V$, and a diagram of
dominant rational $G$-equivariant maps of the form
\[ \xymatrix{ V' \ar@{-->}[d] \ar@{-->}[dr] &   \cr
V & X \, , } \]
where the degree of $V' \dasharrow V$ is prime to $p$. 
(Note that $V'$ is not required to be a vector space.)
Under mild assumptions on $X$ this notion also 
turns out to be equivalent to $p$-versality. 
\end{remark} 

\section{Projective representations}
\label{sect.proj-rep}

Let $G$ be a finite subgroup of $\PGL_n$ defined over $k$ and
$G'$ be the preimage of $G$ in 
$\GL_n$. The diagram
\begin{equation} \label{e.group-extension}
\xymatrix{ 1 \ar@{->}[r] & \bbG_m \ar@{->}[r] & 
G'  
\ar@{->}[r] &  G \ar@{->}[r] 
& 1 
} 
\end{equation}
gives rise to the connecting morphism  
$\partial_K: H^1(K, G) \to  H^2(K, \mathbb G_m)$ 
for every field $K/k$.

\begin{prop} \label{prop:PnVersal} 
{\rm (}cf.~\cite[Corollary 3.4]{duncan2}{\rm )}
Let $G$ be a finite subgroup of $\PGL_n$ defined over $k$.
Then the following conditions are equivalent:

\begin{enumerate}
\item The $G$-action on $\bbP^{n-1}$ is stably birationally linear,
\label{Pn1}
\item the $G$-action on $\bbP^{n-1}$ is very versal, \label{Pn2}
\item the $G$-action on $\bbP^{n-1}$ is versal, \label{Pn3}
\item the $G$-action on $\bbP^{n-1}$ is weakly versal, \label{Pn4}
\item the $G$-action on $\bbP^{n-1}$ is $p$-versal for every prime $p$, 
\label{Pn5}
\item
$\partial_K = 0$ for every $K/k$,
\label{Pn6}
\item $G$ lifts to a subgroup of $\GL_n$, i.e., the exact sequence
\eqref{e.group-extension} splits.
\label{Pn7}
\end{enumerate}
\end{prop}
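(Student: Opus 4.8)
The plan is to run the cycle of implications using the hierarchy~\eqref{e.hierarchy}, Theorem~\ref{thm1}, and the classical dictionary between twisted forms of projective space and Brauer--Severi varieties. First I would record the key observation: since $G$ acts on $\bbP^{n-1}$ through $\PGL_n = \Aut(\bbP^{n-1})$, for every twisting pair $(T,K)$ the twist $^T\bbP^{n-1}$ is the Brauer--Severi variety attached to the image of the class of $T$ under $H^1(K,G) \to H^1(K,\PGL_n) \xrightarrow{\delta} H^2(K, \bbG_m) = \operatorname{Br}(K)$. Because~\eqref{e.group-extension} is the pullback of $1 \to \bbG_m \to \GL_n \to \PGL_n \to 1$ along $G \hookrightarrow \PGL_n$, this composite coincides with $\partial_K$, so $^T\bbP^{n-1}$ has Brauer class $\partial_K([T])$. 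Two standard facts then let me translate point-theoretic statements about $^T\bbP^{n-1}$ into statements about $\partial_K([T])$: by Ch{\^a}telet's theorem a Brauer--Severi variety has a rational point iff its class is trivial, and the index of a Brauer class equals the greatest common divisor of the degrees of the closed points of its Brauer--Severi variety.

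Granting this, most of the implications are short. The chain \ref{Pn1}$\Rightarrow$\ref{Pn2}$\Rightarrow$\ref{Pn3}$\Rightarrow$\ref{Pn4} is~\eqref{e.hierarchy}, and \ref{Pn3}$\Rightarrow$\ref{Pn5} holds because a weakly versal dense open is weakly $p$-versal (take the extension $L = K$). For \ref{Pn4}$\Rightarrow$\ref{Pn6}: by Theorem~\ref{thm1}\ref{thm1.a}, weak versality gives $^T\bbP^{n-1}(K) \neq \emptyset$ for every twisting pair, so $\partial_K([T]) = 0$ for every infinite $K$ and every $T$; since $\operatorname{Br}$ of a finite field vanishes, $\partial_K = 0$ for every $K/k$. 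For \ref{Pn5}$\Rightarrow$\ref{Pn6}: $\bbP^{n-1}$ is smooth and geometrically irreducible, so by Corollary~\ref{cor.zero-cycle} $p$-versality for all $p$ yields a $0$-cycle of degree $1$ on $^T\bbP^{n-1}$ for every twisting pair, forcing $\partial_K([T])$ to have index $1$, hence to be trivial. For \ref{Pn7}$\Rightarrow$\ref{Pn1}: a splitting $s \colon G \to \GL_n$ of~\eqref{e.group-extension} exhibits the given action as $\bbP(V)$ for the $n$-dimensional representation $V$, so $H^1(K,G) \to H^1(K,\PGL_n)$ factors through $H^1(K,\GL_n) = \{\ast\}$; thus $^T\bbP^{n-1} \cong \bbP(V_K) = \bbP^{n-1}_K$ is $K$-rational, a fortiori stably $K$-rational, and Theorem~\ref{thm1}\ref{thm1.d} gives \ref{Pn1}.

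The substantial step is \ref{Pn6}$\Rightarrow$\ref{Pn7}: if $\partial_K = 0$ for every $K/k$, then~\eqref{e.group-extension} splits. Note first that $\partial_K = 0$ for all $K$ is exactly the statement that every $G$-torsor over every field lifts to a $G'$-torsor. The plan is to detect the obstruction to splitting~\eqref{e.group-extension} by the Brauer class of a versal torsor and kill it. Take a generically free linear representation $V_0$ of $G$, chosen (by adding enough copies) so that the $G$-torsor $U \to B$ of Definition~\ref{def.vector-space} has complement of codimension $\geqslant 2$ in $V_0$; set $K = k(B) = k(V_0)^G$ and let $T \to \Spec(K)$ be the generic torsor. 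The central extension~\eqref{e.group-extension} has a class in $H^2_{\mathrm{fppf}}(BG, \bbG_m)$ (this group classifies central extensions of the finite group scheme $G$ by $\bbG_m$), and pulling it back along the classifying morphism of $U \to B$ and restricting to the generic point of $B$ recovers precisely $\partial_K([T]) \in \operatorname{Br}(K)$, which vanishes by \ref{Pn6}. Since $B$ is a sufficiently high Totaro approximation of $BG$, the pullback map on $H^2(\,\cdot\,, \bbG_m)$ is injective on the relevant part, and since $B$ is smooth, $\operatorname{Br}(B) \hookrightarrow \operatorname{Br}(k(B))$; composing these injections shows the extension class is zero, i.e.~\eqref{e.group-extension} splits.

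The hard part is \ref{Pn6}$\Rightarrow$\ref{Pn7}, and within it the delicate point is the identification of the splitting obstruction for~\eqref{e.group-extension} with the Brauer class of the versal torsor: one needs the Totaro-type injectivity of $H^2(\,\cdot\,,\bbG_m)$ under passage from $BG$ to a high enough approximation, together with the computation -- via the Hochschild--Serre spectral sequence for $U \to B$, or equivalently via the pullback of the associated $\bbG_m$-gerbe -- that the class of~\eqref{e.group-extension} restricts to $\partial_{k(B)}$ of the generic torsor. One must also take care that $G$ need not be constant over $k$, so these are flat-cohomology groups of the finite group scheme $G$; the argument is uniform once the dictionary ``central extension of $G$ by $\bbG_m$'' $\leftrightarrow$ ``Brauer class of the associated Brauer--Severi bundle over $BG$'' is in place. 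An equivalent, more hands-on route recasts \ref{Pn6}$\Rightarrow$\ref{Pn7} as a statement about $G$-linearizability of $\calO_{\bbP^{n-1}}(1)$, a $G$-linearization being exactly a lift $G \to \GL_n$: once $\bbP^{n-1}$ is very versal, a dominant $G$-map from a vector space kills the linearization obstruction after pullback (where the trivial linearization is available), and one checks that this obstruction is faithfully detected on such a pullback.
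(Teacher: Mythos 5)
Your argument for the equivalence of \ref{Pn1}--\ref{Pn6} and for \ref{Pn7}$\Rightarrow$\ref{Pn1} is essentially the paper's: identify $^T\bbP^{n-1}$ as the Brauer--Severi variety of $\partial_K([T])$, then apply Theorem~\ref{thm1} and Corollary~\ref{cor.zero-cycle} together with the standard facts that such a variety is rational iff its class is trivial iff it has a $0$-cycle of degree one. That part is fine, and closing the cycle through \ref{Pn7}$\Rightarrow$\ref{Pn1} via Theorem~\ref{thm1}\ref{thm1.d} instead of \ref{Pn7}$\Rightarrow$\ref{Pn2} directly is a cosmetic difference.

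The genuine divergence, and the genuine gap, is in \ref{Pn6}$\Rightarrow$\ref{Pn7}. The paper invokes the Karpenko--Merkurjev computation of the index of a $p$-primary part of the versal Brauer class (\cite[Theorem~4.4, Remark~4.5]{km}): $\partial_K=0$ forces $\gcd_\rho \dim(\rho)=1$ over representations $\rho$ of $G'$ restricting to scalars on $\bbG_m$, and then one explicitly builds the splitting character $\chi=\prod\det(\rho_i)^{d_i}$. Your replacement is a cohomological detection argument, and as written it does not go through. First, the parenthetical claim that $H^2_{\mathrm{fppf}}(BG,\bbG_m)$ \emph{classifies} central extensions of $G$ by $\bbG_m$ is false: that group also receives $\operatorname{Br}(k)$ via pullback along $BG\to\Spec k$, and in general $\Ext^1(G,\bbG_m)$ is only a subquotient appearing on the $E_2$-page of a descent spectral sequence, not the whole group. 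Second, and more seriously, the assertion that ``the pullback map on $H^2(\cdot,\bbG_m)$ is injective on the relevant part'' because $B$ is a high Totaro approximation is both unquantified (what is the relevant part?) and unsupported: the approximation theorems you have in mind are statements about Chow groups or \'etale cohomology with \emph{finite} coefficients, not about $\bbG_m$-cohomology directly. To make your argument correct one must lift the ($|G|$-torsion) extension class to $\mu_{|G|}$-coefficients via the Kummer sequence, use the approximation isomorphism $H^2(BG,\mu_{|G|})\cong H^2(B,\mu_{|G|})$, then carefully track the kernel $\operatorname{Pic}(\cdot)/|G|$ of $H^2(\cdot,\mu_{|G|})\to\operatorname{Br}(\cdot)$ on both $BG$ and $B$ (with $\operatorname{Pic}(B)\cong\operatorname{Pic}(BG)\cong\Hom(G,\bbG_m)$ under the same approximation). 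None of this is in your write-up, and the final ``more hands-on'' paragraph about $G$-linearizability of $\calO(1)$ has the same problem of leaving ``faithfully detected on such a pullback'' unjustified. So the Brauer-group mechanism is the right intuition, but you have not actually reproved the Karpenko--Merkurjev input; you would either need to carry out the torsion-coefficient bookkeeping above, or cite their result as the paper does.
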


\begin{proof} 
Let $(T, K)$ be  a $G$-twisting pair.
Then $X = \,  ^T (\bbP^{n-1})$ is a Brauer-Severi variety over $K$ 
whose class is $\partial_K([T])$, where $[T]$ is the class of $T$
in $H^1(K, G)$.
It is well known that a Brauer-Severi variety $X$ over $K$ is
$K$-rational if and only if $X$ has a $0$-cycle of degree $1$ if
and only if the class of $X$ in $H^2(K, \mathbb G_m)$ is trivial.
This shows that conditions \ref{Pn1} - \ref{Pn6} are all equivalent
by Theorem~\ref{thm1} and Corollary~\ref{cor.zero-cycle}.

\ref{Pn7} $\Longrightarrow$ \ref{Pn2}. If $G$ lifts to $\GL_n$ then
the natural projection map 
$\mathbb{A}^n \setminus \{ 0 \} \to \mathbb{P}^{n-1}$ is dominant
and $G$-equivariant, and 
\ref{Pn2} follows.

\ref{Pn6} $\Longrightarrow$ \ref{Pn7}. 
By~\cite[Theorem 4.4 and Remark 4.5]{km}, \ref{Pn6} implies 
\[ \gcd_{\rho} \, \dim(\rho) = 1 \, , \]
as $\rho$ ranges over representations $G' \to \GL(V)$ such that
$\rho(t) = t \operatorname{I}_V$ for every $t \in \mathbb G_m$.
Here $I_V$ is the identity map on $V$.
Thus there exist representations $\rho_1, \dots, \rho_m$ of $G$ and
integers $d_1, \dots, d_m$ such that the multiplicative character 
$\chi = \det(\rho_1)^{d_1} \dots \det(\rho_m)^{d_m} \colon G' \to \bbG_m$  
has the property that $\chi(t) = t$
and hence splits the sequence \eqref{e.group-extension}
($\operatorname{Ker}(\chi)$ is a complement of $\mathbb G_m$ in $G'$).
\end{proof}

\section{Group actions on quadric and cubic hypersurfaces}
\label{sect.quadric-cubic}

\begin{lem} \label{lem:hilbertPoly}
Let $V$ be a finite-dimensional $k$-vector space, $G \to \GL(V)$
be a linear representation and $X$ be a closed $G$-invariant subvariety
of $\bbP(V)$. 

\begin{enumerate}
\item For any twisting pair $(T, K)$, 
$^T \bbP(V)$ is $K$-isomorphic to $\bbP(V)_K$.
\label{lem:hilbertPoly_a}

\item
The inclusion $\iota \colon X \hookrightarrow \bbP(V)$ induces a
closed embedding $^T \iota \colon {}^TX \hookrightarrow \, \bbP(V)_K$ 
with the same Hilbert polynomial as $X$.
\label{lem:hilbertPoly_b}

\item
If $d := \deg(X)$ is prime to $p$ then $X$ is $p$-versal.
\label{lem:hilbertPoly_c}
\end{enumerate}
\end{lem}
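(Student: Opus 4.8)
The plan is to prove the three parts in order, using twisting functoriality throughout. For part~\ref{lem:hilbertPoly_a}, I would simply invoke Lemma~\ref{lem.Hilbert90}: since $G \to \GL(V)$ is a linear representation, $^T V$ is $K$-isomorphic to $V_K$, and because the projectivization $V \setminus \{0\} \to \bbP(V)$ is $G$-equivariant, the twisting functor carries it to a $K$-isomorphism $^T \bbP(V) \simeq \bbP(V)_K$. (Alternatively, $\bbP(V)$ is a homogeneous space under $\GL(V)$ and the relevant $H^1$ vanishes; but the route through Lemma~\ref{lem.Hilbert90} is cleanest.)

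For part~\ref{lem:hilbertPoly_b}, the closed immersion $\iota\colon X \hookrightarrow \bbP(V)$ is $G$-equivariant, so by Corollary~\ref{cor:twistsPreserve}\ref{cor:twistsPreserve:top} the twisted map $^T\iota\colon {}^TX \hookrightarrow {}^T\bbP(V) \simeq \bbP(V)_K$ is again a closed immersion. It remains to check the Hilbert polynomial is unchanged. Since the Hilbert polynomial of a closed subscheme of projective space is invariant under flat (in particular, arbitrary field) base change, I would pass to a splitting field $L/K$ of $T$: over $L$, the pair $({}^TX \hookrightarrow \bbP(V)_K)_L$ becomes identified with $(X \hookrightarrow \bbP(V))_L$, which has the same Hilbert polynomial as $X$. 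Here a small point needs care: the identification $^T\bbP(V)\simeq \bbP(V)_K$ from part~\ref{lem:hilbertPoly_a} must be chosen so that over $L$ it is compatible with the canonical splitting identification $^T\bbP(V)_L \simeq \bbP(V)_L$ — i.e. the twisting isomorphism should be induced by the one for $V$, so that the $\calO(1)$ polarization is respected. With that compatibility the degree and Hilbert polynomial transport correctly.

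For part~\ref{lem:hilbertPoly_c}, the idea is that a hypersurface-like subvariety of degree prime to $p$ always carries a natural $0$-cycle of degree prime to $p$, obtained by intersecting with a generic linear subspace of complementary dimension. Concretely, for any twisting pair $(T,K)$, part~\ref{lem:hilbertPoly_b} embeds $^TX$ in $\bbP(V)_K$ with $\deg({}^TX) = d$; intersecting $^TX$ with a general linear subspace $\bbP^{c}_K \subset \bbP(V)_K$ of codimension $\dim X$ (which one can take to be defined over $K$, as $K$ is infinite) yields a $0$-cycle on $^TX$ of degree $d$, which is prime to $p$. Hence $^TX$ has a $0$-cycle of degree prime to $p$ for every twisting pair, so by Lemma~\ref{lem:ptwistWeakVersal} (equivalence of \ref{lem:ptwist(a)} and \ref{lem:ptwist(c)}) the $G$-variety $X$ is weakly $p$-versal; after replacing $X$ by its smooth locus, Theorem~\ref{thm.p-versal} upgrades this to $p$-versality.

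\textbf{Main obstacle.} The routine parts are the immersion and functoriality claims; the one place demanding genuine attention is ensuring the generic linear section in part~\ref{lem:hilbertPoly_c} actually produces an honest effective $0$-cycle of degree exactly $d$ on $^TX$ — one must choose the linear subspace generically enough that the intersection is proper (finite) and reduced, or else simply appeal to the fact that the degree of the intersection cycle equals $d \cdot (\text{degree of the linear space}) = d$ by Bézout, regardless of transversality. Invoking Bézout's theorem for the intersection-product degree sidesteps any reducedness worry, and the existence of a $K$-rational linear subspace of the right dimension is guaranteed since $K$ is infinite (a generic point of the relevant Grassmannian specializes to a $K$-point avoiding the finitely many bad conditions). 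A second, milder subtlety, already flagged above, is the compatibility of the isomorphism in part~\ref{lem:hilbertPoly_a} with the polarization, needed so that "degree" and "Hilbert polynomial" in part~\ref{lem:hilbertPoly_b} are computed with respect to the correct $\calO(1)$; this is resolved by constructing the isomorphism from the one for $V$ rather than abstractly.
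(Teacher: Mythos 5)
Your proposal is correct and follows essentially the same route as the paper: Lemma~\ref{lem.Hilbert90} plus Brauer--Severi theory for part~\ref{lem:hilbertPoly_a}, Corollary~\ref{cor:twistsPreserve}\ref{cor:twistsPreserve:top} plus base-change invariance of the Hilbert polynomial for part~\ref{lem:hilbertPoly_b}, and a generic linear section together with Lemma~\ref{lem:ptwistWeakVersal} and Theorem~\ref{thm.p-versal} for part~\ref{lem:hilbertPoly_c}. One small imprecision: in part~\ref{lem:hilbertPoly_a}, twisting the morphism $V \setminus \{0\} \to \bbP(V)$ does not literally yield "a $K$-isomorphism $^T\bbP(V)\simeq\bbP(V)_K$" (it yields a $\bbG_m$-torsor over $^T\bbP(V)$); what you want, and what the paper does, is to observe that this gives a dominant $K$-map from an open subset of $V_K$ to the Brauer--Severi variety $^T\bbP(V)$, hence a $K$-point, hence a splitting --- your parenthetical alternative via the vanishing of $H^1(K,\GL(V))$ is also fine.
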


\begin{proof} 
\ref{lem:hilbertPoly_a} By Lemma~\ref{lem.Hilbert90}, $^TV \simeq V_K$. 
The $(T, K)$-twist of the natural projection $V \dasharrow
\mathbb P(V)$, is thus
a dominant rational map $V_K \dasharrow {}^T\bbP(V)$.  Consequently,
the Brauer-Severi variety ${}^T\bbP(V)$ has a $K$-point, and
part \ref{lem:hilbertPoly_a} follows.

\ref{lem:hilbertPoly_b} Since the embeddings $^T\iota : {}^TX \to \bbP(V)_K$
and $\iota : X \to \bbP(V)$ become projectively equivalent over an algebraic
closure $\bar{K}$, they have the same Hilbert polynomial.

\ref{lem:hilbertPoly_c} By part~\ref{lem:hilbertPoly_b}, 
$^T X$ is a closed subvariety of $\mathbb P(V_K)$ of degree $d$,
defined over $K$.  Intersecting $^T X$ with a suitable
linear subvariety of $\bbP(V_K)$ of complementary dimension,
we obtain a smooth $0$-cycle of degree
$d$ on $^T X$ defined over $K$.  Since $d$ is not divisible by $p$, 
Lemma~\ref{lem:ptwistWeakVersal} and Theorem~\ref{thm.p-versal} now
tell us that $X$ is $p$-versal.  
\end{proof}

\begin{thm} \label{thm:quadraticVersal}
Let $G$ be an algebraic group over $k$, 
$G \to \GL(V)$ be a finite-dimensional linear representation over $k$,
and $X \subset \bbP(V)$ be a smooth $G$-invariant quadratic
hypersurface.  Assume $\dim(V) \geqslant 3$.  The following are equivalent:
\begin{enumerate}
\item $X$ is stably birationally linear, \label{quad_bl}
\item $X$ is very versal, \label{quad_vv}
\item $X$ is versal, \label{quad_versal}
\item $X$ is weakly versal, \label{quad_weak}
\item $X$ is $2$-versal. \label{quad_2versal}
\end{enumerate}

\noindent
Assume further that $G$ is finite, and $G_2$ is a Sylow $2$-subgroup of
$G$. Then conditions
\ref{quad_bl}  -- \ref{quad_2versal} are equivalent to
\begin{enumerate}[resume]
\item $X$ is versal for the action of $G_2$. \label{quad_Sylow}
\end{enumerate}
\end{thm}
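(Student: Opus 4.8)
The plan is to prove Theorem~\ref{thm:quadraticVersal} by first establishing the equivalence of conditions \ref{quad_bl}--\ref{quad_2versal} for a general algebraic group $G$, and then adding \ref{quad_Sylow} in the finite case. The implications \ref{quad_bl} $\implies$ \ref{quad_vv} $\implies$ \ref{quad_versal} $\implies$ \ref{quad_weak} are the hierarchy~\eqref{e.hierarchy}, and \ref{quad_weak} $\implies$ \ref{quad_2versal} is immediate from the definitions (a $G$-equivariant morphism $T \to X$ is in particular one with $L = K$). So the content is the single loop-closing implication \ref{quad_2versal} $\implies$ \ref{quad_bl}. By Theorem~\ref{thm1}\ref{thm1.d}, it suffices to show that for every twisting pair $(T,K)$ the twisted quadric $^T X$ is stably $K$-rational, and by Lemma~\ref{lem:hilbertPoly}\ref{lem:hilbertPoly_b} the variety $^T X$ is a smooth quadric hypersurface in $\bbP(V)_K \cong \bbP^{N}_K$ (with $N = \dim V - 1 \geqslant 2$), since the Hilbert polynomial is preserved. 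The key classical input is that a smooth quadric of dimension $\geqslant 1$ over $K$ is $K$-rational as soon as it has a single $K$-point (stereographic projection from that point). So I must produce a $K$-point on $^T X$.

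Here is where $2$-versality enters. By Lemma~\ref{lem:ptwistWeakVersal} applied with $p = 2$, the hypothesis that $X$ is weakly $2$-versal (which follows from $2$-versality, but in fact we only need weak $2$-versality here, and $2$-versality gives it) means that $^T X$ has a closed point of odd degree, equivalently a $0$-cycle of odd degree. The crucial arithmetic fact is Springer's theorem: a quadratic form over a field $K$ that has a nontrivial zero over an odd-degree extension already has a nontrivial zero over $K$. Equivalently, if a smooth quadric $Q \subset \bbP^N_K$ has a closed point of odd degree, then $Q(K) \neq \emptyset$. Applying this to $Q = \,^T X$ (valid in any characteristic, including $2$, by the appropriate form of Springer's theorem, e.g.\ via the characteristic-free treatment of quadratic forms; the smoothness of $X$ ensures $^T X$ is a smooth, hence regular, quadric), we conclude $^T X(K) \neq \emptyset$, hence $^T X$ is $K$-rational, hence in particular stably $K$-rational. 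Since $(T,K)$ was arbitrary, Theorem~\ref{thm1}\ref{thm1.d} gives \ref{quad_bl}. I expect invoking Springer's theorem in the correct characteristic-free generality to be the one step that needs a careful citation rather than a one-line argument; everything else is bookkeeping with the results already in the paper.

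For the finite case, the remaining equivalence \ref{quad_Sylow} $\Longleftrightarrow$ the rest follows from Corollary~\ref{cor:GversalIsGpversal}(a): since $G_2 \subset G$ has index prime to $2$ (it is a Sylow $2$-subgroup), the $G$-action on the smooth variety $X$ is $2$-versal if and only if the restricted $G_2$-action is $2$-versal. But we have just shown that for a smooth $G$-invariant quadric of dimension $\geqslant 1$, \emph{$2$-versality is equivalent to versality} — applying the already-proved equivalence \ref{quad_2versal} $\Longleftrightarrow$ \ref{quad_versal} with $G$ replaced by $G_2$ (the quadric $X$ is still a smooth $G_2$-invariant quadric in $\bbP(V)$, so the theorem applies verbatim). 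Hence the $G_2$-action on $X$ is $2$-versal iff it is versal, and chaining with Corollary~\ref{cor:GversalIsGpversal}(a) we get: $X$ is $G$-versal $\Leftrightarrow$ $X$ is $G$-$2$-versal $\Leftrightarrow$ $X$ is $G_2$-$2$-versal $\Leftrightarrow$ $X$ is $G_2$-versal, which is condition \ref{quad_Sylow}. The only subtlety to watch is that Corollary~\ref{cor:GversalIsGpversal} requires $X$ generically smooth, which holds since $X$ is smooth, and that we are free to apply the first part of the present theorem to the subgroup $G_2$, which we are since $\dim V \geqslant 3$ is a hypothesis on $V$ alone.
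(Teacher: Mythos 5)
Your proof is correct and follows essentially the same route as the paper: it identifies $^T X$ as a smooth quadric hypersurface via Lemma~\ref{lem:hilbertPoly}, invokes Springer's theorem to pass from a prime-to-$2$ $0$-cycle to a $K$-point, uses the classical rationality of pointed quadrics, and translates between geometric and versality statements via Theorem~\ref{thm1}, Lemma~\ref{lem:ptwistWeakVersal} and Theorem~\ref{thm.p-versal}. For the finite-group case both you and the paper rely on Corollary~\ref{cor:GversalIsGpversal}(a); your bookkeeping there (re-applying the first part of the theorem to $G_2$ and chaining through $2$-versality) is a harmless variant of the paper's shorter observation that very versality for $G$ restricts directly to very versality for $G_2$, giving \ref{quad_vv}~$\Rightarrow$~\ref{quad_Sylow}.
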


\begin{proof}
Let $(T, K)$ be a twisting pair.
By Lemma~\ref{lem:hilbertPoly}, $Q := {}^T X$
is an irreducible quadratic hypersurface in $\bbP^n_K$ defined over $K$. The
equivalence of conditions \ref{quad_bl}--\ref{quad_weak} now follows from
Theorem~\ref{thm1} and the
following well-known property of irreducible quadric hypersurfaces
$Q \subset \mathbb P(V)_K$:
\begin{equation} \label{e.quadric}
\text{if $Q$ has a smooth $K$-point then $X$ is $K$-rational.}
\end{equation}
The equivalence of \ref{quad_bl} and
\ref{quad_2versal} is an immediate consequence of Springer's theorem:
if $Q$ has a smooth $L$-point for some odd degree extension $L/K$ then $Q$ 
has a smooth $K$-point.

If $G$ is a finite group then \ref{quad_Sylow} $\implies$ \ref{quad_2versal}
by Corollary~\ref{cor:GversalIsGpversal}(a). On the other 
hand,~\ref{quad_vv} $\implies$ $X$ is very versal 
as a $G_2$-variety $\implies$ \ref{quad_Sylow}.
\end{proof}

If we replace a quadric hypersurface by a cubic hypersurface
of dimension $\geqslant 2$ then property~\eqref{e.quadric} in the 
proof of Theorem~\ref{thm:quadraticVersal}
remains true, provided that ``rational'' is replaced by 
``unirational'', and Springer's Theorem
becomes an open conjecture.  The precise statements are as follows.

\begin{thm} \label{thm:cubicUnirational}
{\rm (}\cite{kollar}{\rm )}
Let $X \subset \bbP^n_k$ be a smooth cubic hypersurface where $n \geqslant 3$.
If $X$ has a $k$-point then $X$ is $k$-unirational. 
\end{thm}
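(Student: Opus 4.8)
The plan is to carry out the classical secant-line (or ``double-chord'') construction, which produces a dominant rational map from a projective space over a given $k$-point. Let $x_0 \in X(k)$ be the given rational point. The idea is to exploit the fact that the tangent hyperplane section $H_{x_0} \cap X$ is a singular cubic hypersurface of one lower dimension with a singular point at $x_0$; projecting such a cubic from its singular point gives it a rational structure as a cone-like object, and this rationality can be spread out in a family.

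First, I would set up the ``tangent space'' construction: for a general point $y \in X$, consider the tangent hyperplane $T_yX$, intersect with $X$ to get a cubic surface (or hypersurface) $X_y := T_yX \cap X$, which is singular at $y$. This $X_y$ contains two lines through $y$ in a suitable sense — more precisely, for general $y$ the surface $X_y$ is a cubic with a node or worse at $y$, and projection from $y$ realizes $X_y$ birationally over $k(y)$. Second, and this is the key point, I would run this construction in a family over $X$ itself: let $Y \to X$ be the variety of pairs $(y, z)$ with $y \in X$ and $z$ a point on a line through $y$ contained in $X_y$; over the generic point of $X$ this is a rational variety over $k(X)$, giving a dominant rational map $\mathbb{A}^N \times X \dasharrow X$ (roughly), or rather a rational section that, composed appropriately, yields unirationality. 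Third, I would need to descend and specialize: the construction must be anchored at the \emph{rational} point $x_0$ so that the parametrizing space is defined over $k$, not just over $k(X)$. This is done by using $x_0$ to trivialize the relevant family — one considers lines through $x_0$, their second intersection points with $X$, then the tangent-hyperplane cubic at those points, and the further line in that cubic; chaining two such steps produces a dominant rational map $\mathbb{P}^{n-1} \times \mathbb{P}^{n-1} \dasharrow X$ (or a suitable projective space), all defined over $k$.

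The main obstacle I expect is precisely the \emph{descent to $k$}: a single application of the tangent-hyperplane trick only gives unirationality over $k(y)$ for a general point $y$, i.e. a rational point on $X_{k(X)}$, which is not enough. The standard fix (going back to Segre, and to B.~Segre's and to Kollár--Mella-type arguments, and in higher dimension to Clemens--Griffiths-style sweeping) is to perform the construction \emph{twice}: starting from $x_0 \in X(k)$, the first step produces a family of rational points over a rational base, and the second step, applied fiberwise, converts ``rational points over a rational base'' into genuine unirationality of $X$ over $k$. Making the generality hypotheses (that the relevant tangent sections are not too singular, that the constructed lines exist and move in the expected-dimensional family, that the various maps are dominant and separable — here $\operatorname{char} k$ issues may require care, or one passes to a separable closure and descends via \cite[Proposition 2.7.1]{EGA4}) precise and checking dominance by a dimension count is the bulk of the technical work. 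I would also need $n \geqslant 3$ exactly to guarantee that the tangent-hyperplane cubic has positive dimension and contains the requisite lines; for $n = 2$ (cubic curves) the statement is false, which is consistent. Since this theorem is quoted from \cite{kollar}, I would ultimately cite that reference for the full details while indicating the construction above.
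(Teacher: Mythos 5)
The paper does not give its own proof of this theorem: it is stated purely as a citation to Koll\'ar's paper, so there is nothing internal to compare your argument against. Your sketch of the classical double-tangent-hyperplane construction, and your identification of the genuinely hard points (dominance/dimension counts, degenerate tangent sections, separability in positive characteristic, and the need to run the construction twice to descend from $k(y)$ to $k$), is a fair summary of what Koll\'ar actually does, and your decision to defer to \cite{kollar} for the full details matches exactly what the paper does.
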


\begin{conj}[J.~W.~S.~Cassels, P.~Swinnerton-Dyer; see~\cite{coray1}] 
\label{conj:CSD}
Suppose $X \subset \bbP^n_k$ is a cubic hypersurface. 
If $X$ has a $0$-cycle of degree prime to $3$, 
then $X$ has a $k$-point.
\end{conj}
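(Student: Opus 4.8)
The plan is to prove Conjecture~\ref{conj:CSD} by dévissage on $n$, reducing it to the classical case of a smooth cubic \emph{surface}, and then to attack that case by Coray's residuation method; the termination of the latter is the genuine difficulty and is where the conjecture has always stalled.

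\emph{Reductions.} If $k$ is finite there is nothing to prove: a cubic form in $\geq 4$ variables has a nontrivial zero by Chevalley--Warning, and a smooth plane cubic over $\bbF_q$ is a genus-one curve, hence has a point by the Weil bound; so assume $k$ infinite. If $X$ is not geometrically integral, then its geometric components have degrees summing to $3$, so an elementary analysis (a single hyperplane component is $\Gal(\bar k/k)$-stable and furnishes a $k$-point; a $\Gal$-conjugate triple of hyperplanes either has a $k$-rational point in its common intersection or forces every closed point of $X$ to have degree divisible by $3$, making the hypothesis vacuous; and a quadric-plus-hyperplane again contains a $k$-rational hyperplane) disposes of this case. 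If $X$ is geometrically integral but singular with a $k$-rational singular point, $X$ is $k$-rational and we are done; otherwise we pass to a general hyperplane section through the support of the given $0$-cycle, a cubic hypersurface of smaller dimension still carrying a closed point of degree prime to $3$. Hence it suffices to treat $X$ smooth, and we induct on $n$.

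\emph{Base case and inductive step.} For $n \leq 1$ the statement is vacuous or trivial. For $n = 2$, $X$ is a smooth plane cubic, hence a torsor under its Jacobian elliptic curve $E$; the class $\mathcal{O}_X(1) \in \Pic(X)$ is a $k$-rational divisor class of degree $3$, so the period of $X$ divides $3$, while a closed point of degree $d$ gives a $k$-rational divisor of degree $d$, so the period divides $d$ and therefore divides the index of $X$. By hypothesis the index is prime to $3$; hence so is the period, and a divisor of $3$ prime to $3$ is $1$, so $X \cong E$ and $X(k) \neq \emptyset$. For $n \geq 4$, let $P$ be the closed point of degree prime to $3$ supplied by the hypothesis; a Bertini-type argument over the infinite field $k$ (the only subtlety arising when $P$ spans a linear subspace lying on $X$, which one removes by an auxiliary choice) produces a $k$-hyperplane $H \supset P$ with $X \cap H \subset \bbP^{n-1}_k$ a smooth cubic $(n-2)$-fold still containing $P$, and the inductive hypothesis gives $(X \cap H)(k) \subseteq X(k) \neq \emptyset$. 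This reduces the whole conjecture to the single case $n = 3$: a smooth cubic surface $X \subset \bbP^3_k$ possessing a closed point of degree prime to $3$.

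\emph{The cubic surface case and the main obstacle.} On a smooth cubic surface one runs Coray's residuation technique~\cite{coray1}. The prototype: if $P$ has degree $2$ and the line $\langle P \rangle$ is not contained in $X$, then $\langle P \rangle \cap X$ is a degree-$3$ divisor on $\langle P \rangle$ consisting of $P$ plus a residual point of degree $1$, giving $X(k) \neq \emptyset$; the obstruction is exactly that $P$ may lie on a line of $X$, and the combinatorics of the $27$ lines (the $W(E_6)$-action and its Galois-stable sub-configurations) then governs whether one can reduce further. For larger degree one residuates with conics and twisted cubics on $X$ in place of lines, driving the index into a bounded list of values. The crux — and this is the open heart of the matter — is to show that when the index is prime to $3$ this reduction always terminates at index $1$; over a number field this is equivalent, given that the hypothesised $0$-cycle annihilates the $3$-primary part of $\operatorname{Br}(X)/\operatorname{Br}(k) \cong H^1(k, \Pic(X_{\bar k}))$, to the expectation that the Brauer--Manin obstruction is the only obstruction to a rational point on a cubic surface, which is itself unproven. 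So the main obstacle is precisely this last residuation step; the remainder of the plan is routine dévissage, and once a $k$-point is produced Theorem~\ref{thm:cubicUnirational} additionally upgrades it to $k$-unirationality.
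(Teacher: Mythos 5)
There is nothing in the paper to compare your argument against: the statement you were given is Conjecture~\ref{conj:CSD}, attributed to Cassels and Swinnerton-Dyer, and the paper explicitly records that it ``is known to be true only for $n=2$.'' It is used in the paper only as a hypothesis (in Theorem~\ref{thm:cubicVersal} and Proposition~\ref{prop.incompatible}, where it is shown to be incompatible with Conjecture~\ref{conj:Dolgachev}). To your credit, your write-up does not actually claim a proof either: you correctly isolate the termination of the residuation process on a cubic surface as the open heart of the matter. So what you have produced is a research plan, not a proof, and it cannot be ``spliced in'' as one.

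Beyond that, the parts you describe as ``routine d\'evissage'' contain genuine gaps. The inductive step from $\bbP^n$ to $\bbP^{n-1}$ via a hyperplane $H \supset P$ breaks down whenever the closed point $P$ of degree $d$ prime to $3$ has $d>n$: its linear span can be all of $\bbP^n$, in which case no hyperplane contains it, and one cannot first shrink $d$ without begging the question. Even when such an $H$ exists, Bertini only gives smoothness of $X\cap H$ away from the base locus, so smoothness of the section along $P$ (which your induction needs) is not guaranteed; and a singular cubic surface section reintroduces exactly the cases you discarded earlier. In fact no reduction of the higher-dimensional conjecture to the surface case is known, so the claim that ``this reduces the whole conjecture to the single case $n=3$'' is itself unproven. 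Your $n=2$ argument (period divides both $3$ and the index, hence equals $1$, and a degree-one rational divisor class on a genus-one curve is effective by Riemann--Roch) is the correct classical proof of the one known case, and your identification of Coray's results and of the Brauer--Manin heuristic for surfaces is accurate; but everything above dimension one remains open, both in your proposal and in the literature the paper cites.
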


The argument we used to prove Theorem~\ref{thm:quadraticVersal} now yields 
the following analogous statement for cubic hypersurfaces.

\begin{thm} \label{thm:cubicVersal}
Let $G$ be an algebraic group over $k$,
$G \to \GL(V)$ be a finite-dimensional linear representation over $k$
and $X \subset \bbP(V)$ be a smooth $G$-invariant cubic hypersurface.
Assume $\dim(V) \geqslant 4$.  
The following are equivalent:
\begin{enumerate}
\item $X$ is very versal, \label{cubic_vv}
\item $X$ is versal, \label{cubic_versal}
\item $X$ is weakly versal. \label{cubic_weak}
\end{enumerate}
Now suppose $G$ is finite, $G_3$ is a Sylow $3$-subgroup of $G$, and
Conjecture~\ref{conj:CSD} holds. Then
\ref{cubic_vv} 
$\Leftrightarrow$
\ref{cubic_versal}
$\Leftrightarrow$
\ref{cubic_weak} 
$\Leftrightarrow$
\ref{cubic_3versal}
$\Leftrightarrow$
\ref{cubic_Sylow}, where
\begin{enumerate}[resume]
\item $X$ is $3$-versal, \label{cubic_3versal}
\item $X$ is versal for the action of $G_3$. \label{cubic_Sylow}
\qed
\end{enumerate}
\end{thm}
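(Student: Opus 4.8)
The plan is to mirror the proof of Theorem~\ref{thm:quadraticVersal} almost verbatim, replacing the role of rationality by unirationality, and Springer's theorem by Conjecture~\ref{conj:CSD} together with Theorem~\ref{thm:cubicUnirational}. First I would fix a twisting pair $(T,K)$ and apply Lemma~\ref{lem:hilbertPoly}: since $X \subset \bbP(V)$ is a smooth $G$-invariant cubic hypersurface, $^T\bbP(V)$ is $K$-isomorphic to $\bbP(V)_K$ and $C := {}^T X$ embeds as a cubic hypersurface in $\bbP(V)_K$ with the same Hilbert polynomial as $X$; in particular $C$ is an irreducible cubic hypersurface in $\bbP^n_K$ of dimension $\geqslant 2$ (using $\dim(V)\geqslant 4$), smooth because smoothness is a geometric property and $X$ is smooth.

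For the equivalence of \ref{cubic_vv}, \ref{cubic_versal}, \ref{cubic_weak}, the key observation is the analogue of property~\eqref{e.quadric}: if a smooth cubic hypersurface $C \subset \bbP^n_K$ of dimension $\geqslant 2$ has a $K$-point then $C$ is $K$-unirational — this is precisely Theorem~\ref{thm:cubicUnirational}. Now \ref{cubic_vv} $\implies$ \ref{cubic_versal} $\implies$ \ref{cubic_weak} is part of the hierarchy~\eqref{e.hierarchy}, so it remains to close the loop. By Theorem~\ref{thm1}\ref{thm1.a}, weak versality says $^T X(K) \neq \emptyset$ for every twisting pair; by Theorem~\ref{thm:cubicUnirational} this forces $^T X$ to be $K$-unirational for every twisting pair, which by Theorem~\ref{thm1}\ref{thm1.c} is exactly very versality. (One subtlety: a smooth cubic hypersurface of dimension $\geqslant 2$ that has a $K$-point need not be smooth in the sense of having a \emph{smooth} $K$-point, but smoothness of $C$ as a variety guarantees every point is smooth, so no extra hypothesis is needed here.)

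For the second half, assume $G$ is finite, $G_3$ is a Sylow $3$-subgroup, and Conjecture~\ref{conj:CSD} holds. I would show \ref{cubic_vv} $\implies$ \ref{cubic_Sylow} $\implies$ \ref{cubic_3versal} $\implies$ \ref{cubic_weak}, which together with the first part closes the chain. The implication \ref{cubic_vv} $\implies$ \ref{cubic_Sylow} is immediate: if $X$ is very versal as a $G$-variety then it is very versal, hence versal, as a $G_3$-variety. For \ref{cubic_Sylow} $\implies$ \ref{cubic_3versal}: $X$ versal as a $G_3$-variety implies $X$ is $3$-versal as a $G_3$-variety, and by Corollary~\ref{cor:GversalIsGpversal}(a) (with $H = G_3$, whose index $[G:G_3]$ is finite and prime to $3$, and using that $X$ is smooth, hence generically smooth) this is equivalent to $X$ being $3$-versal as a $G$-variety. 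Finally \ref{cubic_3versal} $\implies$ \ref{cubic_weak} is where the Cassels–Swinnerton-Dyer conjecture enters: by Theorem~\ref{thm.p-versal} and Lemma~\ref{lem:ptwistWeakVersal}, $3$-versality of the smooth variety $X$ means that for every twisting pair $(T,K)$, the cubic hypersurface $^T X$ has a $0$-cycle of degree prime to $3$; by Conjecture~\ref{conj:CSD} this gives $^T X$ a $K$-point, and then Theorem~\ref{thm1}\ref{thm1.a} yields weak versality.

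The main obstacle — apart from correctly invoking the two deep inputs (Theorem~\ref{thm:cubicUnirational} and the conjecture) — is bookkeeping the smoothness and dimension hypotheses so that every cited result applies: one must check that $\dim(V)\geqslant 4$ makes $\dim({}^T X)\geqslant 2$ (needed for Theorem~\ref{thm:cubicUnirational} and for the conjecture to be the relevant one), that $^T X$ is indeed smooth so that its $K$-points are automatically smooth points, and that $X$ being smooth lets us apply the $p$-versality machinery of Section~\ref{sect.p-versality} (Theorem~\ref{thm.p-versal} and Corollary~\ref{cor:GversalIsGpversal}), all of whose statements require (generic) smoothness. None of these is hard, but the proof should state them cleanly rather than leaving them implicit, exactly as in the quadric case.
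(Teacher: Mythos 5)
Your proposal is correct and takes essentially the same approach as the paper: the paper leaves the theorem with a bare \qed, stating immediately beforehand that the argument for Theorem~\ref{thm:quadraticVersal} carries over once one replaces property~\eqref{e.quadric} (smooth $K$-point implies $K$-rational) by Koll\'ar's Theorem~\ref{thm:cubicUnirational} (smooth $K$-point implies $K$-unirational) and Springer's theorem by Conjecture~\ref{conj:CSD}, and you fill in exactly that substitution, including the same appeals to Lemma~\ref{lem:hilbertPoly}, Theorem~\ref{thm1}, Theorem~\ref{thm.p-versal}, Lemma~\ref{lem:ptwistWeakVersal}, and Corollary~\ref{cor:GversalIsGpversal}(a). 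The only cosmetic difference is the ordering of implications in the conditional half (you run the cycle \ref{cubic_vv}~$\Rightarrow$~\ref{cubic_Sylow}~$\Rightarrow$~\ref{cubic_3versal}~$\Rightarrow$~\ref{cubic_weak}, whereas the quadric proof is phrased as \ref{quad_Sylow}~$\Rightarrow$~\ref{quad_2versal} together with \ref{quad_vv}~$\Rightarrow$~\ref{quad_Sylow}), which is an immaterial reorganization of the same ingredients.
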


Note that Conjecture~\ref{conj:CSD} is known to be true only for $n = 2$;
see,~e.g.,~\cite{coray1}.  In the statement of Theorem~\ref{thm:cubicVersal} 
we are assuming that $n = \dim(V) - 1 \geqslant 3$.

\begin{cor} \label{cor.fixed-pt}
Suppose an algebraic group $G$ acts on a smooth cubic
hypersurface $X$ as in Theorem~\ref{thm:cubicVersal}. 
If $G$ fixes a $k$-point $x \in X(k)$ then $X$ is $G$-versal.
\end{cor}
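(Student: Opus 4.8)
The plan is to deduce Corollary~\ref{cor.fixed-pt} from Theorem~\ref{thm:cubicVersal} by exhibiting a direct geometric construction showing that a smooth $G$-invariant cubic hypersurface $X$ with a $G$-fixed $k$-point is in fact very versal; this is stronger than what is literally asked and bypasses any appeal to Conjecture~\ref{conj:CSD}. First I would record that by Theorem~\ref{thm:cubicVersal} it suffices to prove that $X$ is weakly versal (or versal, or very versal — these are equivalent under the standing hypothesis $\dim(V)\geqslant 4$). The naive attempt — invoke Proposition~\ref{prop.weakly-versal} directly, since $G$ fixes $x\in X(k)$ — does give weak versality immediately, so in fact the corollary follows in one line: $X$ has a $G$-fixed $k$-point, hence is weakly versal by Proposition~\ref{prop.weakly-versal}, hence very versal by the equivalence \ref{cubic_vv}$\Leftrightarrow$\ref{cubic_weak} in Theorem~\ref{thm:cubicVersal}, which holds unconditionally. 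So the real content is just assembling these two cited facts.

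However, to make the corollary genuinely informative one should check the construction is robust: the equivalence \ref{cubic_vv}$\Leftrightarrow$\ref{cubic_versal}$\Leftrightarrow$\ref{cubic_weak} in Theorem~\ref{thm:cubicVersal} does \emph{not} require Conjecture~\ref{conj:CSD} — only the further equivalences with \ref{cubic_3versal} and \ref{cubic_Sylow} do — so the corollary is unconditional. I would make this explicit. The mechanism behind Theorem~\ref{thm:cubicVersal}\ref{cubic_vv}$\Leftrightarrow$\ref{cubic_weak} that we are relying on is: for any twisting pair $(T,K)$, the twist $Q := {}^T X$ is a smooth cubic hypersurface in $\bbP^n_K$ with $n\geqslant 3$ (by Lemma~\ref{lem:hilbertPoly}), so weak versality gives $Q(K)\neq\emptyset$ via Theorem~\ref{thm1}\ref{thm1.a}, and then Theorem~\ref{thm:cubicUnirational} (Kollár) upgrades this to $K$-unirationality of $Q$, which by Theorem~\ref{thm1}\ref{thm1.c} is exactly very versality of $X$. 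The $G$-fixed point $x$ feeds the chain at the first link: the constant map ${}^T(\text{pt})\to X$ — more precisely, the existence of $x\in X(k)^G$ — makes $X$ weakly versal via Proposition~\ref{prop.weakly-versal}.

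Concretely, the proof I would write is: \emph{By Proposition~\ref{prop.weakly-versal}, since $G$ fixes the $k$-point $x\in X(k)$, the $G$-variety $X$ is weakly versal. Since $\dim(V)\geqslant 4$, Theorem~\ref{thm:cubicVersal} gives that conditions \ref{cubic_vv}, \ref{cubic_versal}, \ref{cubic_weak} are equivalent (this equivalence does not depend on Conjecture~\ref{conj:CSD}). Hence $X$ is very versal, and in particular $G$-versal.}

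\textbf{Main obstacle.} There is essentially no obstacle; the one point requiring a moment's care is to confirm that the unconditional part of Theorem~\ref{thm:cubicVersal} — the equivalence of \ref{cubic_vv}, \ref{cubic_versal}, \ref{cubic_weak} — is genuinely independent of Conjecture~\ref{conj:CSD}, so that Corollary~\ref{cor.fixed-pt} is itself unconditional. This is visible from the proof of Theorem~\ref{thm:cubicVersal}: the chain through $Q(K)\neq\emptyset \Rightarrow Q$ is $K$-unirational uses only Kollár's theorem (Theorem~\ref{thm:cubicUnirational}), whereas Conjecture~\ref{conj:CSD} enters only when relating these to the $3$-versality conditions \ref{cubic_3versal} and \ref{cubic_Sylow}. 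I would state this caveat explicitly in the write-up so the reader does not mistakenly think the corollary is conditional.
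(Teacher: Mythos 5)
Your proposal is correct and matches the paper's proof exactly: the paper also deduces weak versality from Proposition~\ref{prop.weakly-versal} and then invokes the (unconditional) equivalences in Theorem~\ref{thm:cubicVersal}. Your additional remark that the corollary does not depend on Conjecture~\ref{conj:CSD} is a correct and useful clarification, though the paper leaves it implicit.
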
 

\begin{proof}
By Proposition~\ref{prop.weakly-versal}
the $G$-action on $X$ is weakly versal. 
Theorem~\ref{thm:cubicVersal} now tells us that this action is
versal.
\end{proof}

We now recall the definitions of two important numerical invariants of 
a finite group $G$. The {\em essential dimension}, $\ed(G)$ of $G$ 
is the minimal
dimension of a versal $G$-variety with a faithful $G$-action; 
see~\cite{br}, \cite{icm} or Remark~\ref{rem.minimal}.
The \emph{Cremona dimension}, $\Crdim(G)$ is the minimal integer $n$ 
such that $G$ embeds into the Cremona group $\Cr(n)$ of birational
automorphisms of the affine space $\mathbb A^n$.

For the rest of this section we will assume that the base field $k$
is the field $\mathbb C$ of complex numbers.

\begin{conj} \label{conj:Dolgachev} (I.~Dolgachev, unpublished)
$\ed(G) \geqslant \Crdim (G)$ for every finite group $G$. 
\end{conj}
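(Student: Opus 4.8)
\smallskip
The plan is to prove $\ed(G) \geq \Crdim(G)$ by producing, from a minimal versal $G$-variety, a faithful birational action of $G$ on a \emph{rational} variety of dimension $\ed(G)$. I would start from Remark~\ref{rem.minimal}: choose a generically free versal $G$-variety $X$ with $\dim X = \ed(G)$. By minimality of $\dim X$, Remark~\ref{rem.minimal} shows that $X$ is in fact very versal, so there are a faithful linear representation $G \to \GL(V)$ and a dominant $G$-equivariant rational map $V \dasharrow X$; in particular $X$ is $\bbC$-unirational (equivalently, this is the trivial-torsor case of Theorem~\ref{thm1}\ref{thm1.c}). Since the $G$-action on $X$ is generically free it is faithful on the function field $\bbC(X)$, so $G \hookrightarrow \Aut_{\bbC}\bbC(X)$. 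Now \emph{if} $X$ is $\bbC$-rational, then $\bbC(X) \cong \bbC(t_1,\dots,t_d)$ with $d = \dim X = \ed(G)$, hence $\Aut_{\bbC}\bbC(X) \cong \Cr(d)$, and therefore $\Crdim(G) \leq \ed(G)$, which is exactly the asserted inequality. Thus the whole statement reduces to the rationality of a minimal-dimensional very versal $G$-variety $X$ --- equivalently, to the existence of \emph{some} faithful rational $G$-variety of dimension $\ed(G)$.

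This reduction already settles two ranges unconditionally. If $\ed(G) \leq 2$ then $X$ is a unirational curve or surface over $\bbC$, hence rational by the theorems of L\"uroth and Castelnuovo, so Conjecture~\ref{conj:Dolgachev} holds for every $G$ with $\ed(G) \leq 2$. If $G$ is a $p$-group, no compression is needed at all: by the theorem of Karpenko and Merkurjev, $\ed(G)$ equals the least dimension of a faithful representation $V$ of $G$, and the linear $G$-action on the rational variety $\bbP(V)$ is faithful of dimension $\ed(G) - 1 < \ed(G)$, so $\Crdim(G) \leq \ed(G)$ again.

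The main obstacle is precisely this rationality input. For $\ed(G) \geq 3$ there is no formal reason why a unirational $G$-variety of minimal dimension should be rational: unirational threefolds need not be rational (Artin--Mumford, Clemens--Griffiths), so a priori the compression $V \dasharrow X$ may land on a non-rational $X$. To push the argument through one would need either (i) to show that among the very versal $G$-varieties of dimension $\ed(G)$ one can always choose a rational representative --- exploiting the freedom in the construction of Remark~\ref{rem.minimal} together with $G$-equivariant birational modifications --- or (ii) to bypass $X$ entirely and construct a faithful birational $G$-action on $\bbP^{\ed(G)}$ directly from the group-theoretic data of $G$. I expect this to be the crux, and the difficulty is not artificial: in the later sections the equivalences of Theorem~\ref{thm:cubicVersal}, granting Conjecture~\ref{conj:CSD}, yield a finite group with $\Crdim(G) > \ed(G)$, so a successful proof of Conjecture~\ref{conj:Dolgachev} would in fact refute Conjecture~\ref{conj:CSD}.
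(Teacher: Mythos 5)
The statement you were asked about is a \emph{conjecture}, not a theorem: the paper offers no proof of it, and in fact Proposition~\ref{prop.incompatible} shows it is incompatible with Conjecture~\ref{conj:CSD} (and with Conjecture~\ref{conj:GpVersalEnough}) --- granting either of those, $\ed(\PSL_2(\bbF_{11})) = 3$ while Dolgachev's conjecture forces $\ed(\PSL_2(\bbF_{11})) = 4$. So there is no ``paper's proof'' to compare against, and any complete proof you produced would be a substantial new result refuting Conjecture~\ref{conj:CSD}. To your credit, your write-up is honest about this: it is a reduction, not a proof.

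The reduction itself is sound and is the natural one. Taking $X$ generically free, versal, of dimension $\ed(G)$, Remark~\ref{rem.minimal} makes $X$ very versal, hence unirational over $\bbC$, and generic freeness gives $G \hookrightarrow \Aut_{\bbC}\bbC(X)$; if $X$ were rational you would get $G \hookrightarrow \Cr(\ed(G))$ and be done. Your unconditional cases are correct: L\"uroth and Castelnuovo handle $\ed(G) \leqslant 2$, and for $p$-groups the Karpenko--Merkurjev theorem plus the faithful \emph{linear} action on $V$ itself gives $\Crdim(G) \leqslant \dim V = \ed(G)$ (note that the action on $\bbP(V)$ need not be faithful if a central element acts by scalars, so quote the affine action, not the projective one). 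But the gap you flag --- rationality of a minimal-dimensional very versal $G$-variety --- is not a technical step to be filled in later; it is the entire content of the conjecture, and the paper's own Proposition~\ref{prop.incompatible} is evidence that it can fail: for $G = \PSL_2(\bbF_{11})$, Prokhorov's theorem shows there is \emph{no} faithful rational (indeed no rationally connected rational) $G$-threefold, so if the Klein cubic is versal (as Conjecture~\ref{conj:CSD} would imply), the minimal versal threefold is unirational but provably non-rational, and no amount of equivariant birational modification rescues option (i) of your plan. In short: correct partial results, but the central claim remains open and is plausibly false.
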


\begin{prop} \label{prop.incompatible}
\begin{enumerate}
\item
Conjecture~\ref{conj:GpVersalEnough}
implies $\ed( \PSL_2 (\bbF_{11})) = 3$. 

\item
Conjecture~\ref{conj:CSD}
implies $\ed( \PSL_2 (\bbF_{11})) = 3$. 

\item
Conjecture~\ref{conj:Dolgachev} implies $\ed(\PSL_2 (\bbF_{11})) = 4$. 
\end{enumerate}
\end{prop}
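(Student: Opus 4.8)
The plan is to realize $G:=\PSL_2(\bbF_{11})$ --- a simple group of order $660=2^2\cdot3\cdot5\cdot11$ --- as acting on a smooth cubic threefold, and to run all three parts through Theorems~\ref{thm1} and~\ref{thm:cubicVersal} together with two unconditional bounds. Let $V$ be one of the two irreducible $5$-dimensional complex representations of $G$, and let $X\subset\bbP(V)=\bbP^4$ be a smooth $G$-invariant cubic threefold --- for instance the Klein cubic $x_0^2x_1+x_1^2x_2+x_2^2x_3+x_3^2x_4+x_4^2x_0=0$, whose full automorphism group is known to be $G$. Since $X$ spans $\bbP^4$ and $G\hookrightarrow\PGL_5$ acts faithfully on $\bbP^4$, the $G$-action on $X$ is faithful, hence (as $G$ is finite) generically free; so by Remark~\ref{rem.minimal} any proof that $X$ is $G$-versal yields $\ed(G)\leqslant\dim X=3$. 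I would first record, unconditionally, that $3\leqslant\ed(G)\leqslant4$: the dominant $G$-equivariant map $\bbA^5\setminus\{0\}\dasharrow\bbP^4$ exhibits $\bbP^4$ as a faithful very versal $G$-variety of dimension $4$, giving $\ed(G)\leqslant4$; and if $\ed(G)\leqslant2$, then a minimal-dimensional versal generically free $G$-variety would be very versal (Remark~\ref{rem.minimal}), hence unirational, hence rational over $\bbC$ (L\"uroth, Castelnuovo), so $G$ would embed into $\PGL_2(\bbC)=\Aut(\bbP^1)$ or into $\Cr(2)$, which is impossible --- $\PSL_2(\bbF_{11})$ is a subgroup of neither, the finite simple subgroups of $\Cr(2)$ being only $\Alt_5,\Alt_6,\PSL_2(\bbF_7)$ (Dolgachev--Iskovskikh). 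Hence $\ed(G)\geqslant3$.

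The geometric heart of the argument is the claim: \emph{for every prime $p$ the $G_p$-action on $X$ is versal}, where $G_p\subset G$ is a Sylow $p$-subgroup. Since $G$ is finite it suffices to treat $p\in\{2,3,5,11\}$, and by Corollary~\ref{cor.fixed-pt} (note $\dim V=5\geqslant4$) it is enough to exhibit a $G_p$-fixed point on $X$. For $p=2$ (where $G_p\cong\bbZ/2\bbZ\times\bbZ/2\bbZ$) and $p=3$ (where $G_p\cong\bbZ/3\bbZ$), decompose $V$ into the common eigenspaces of $G_p$: there are at most $4$, resp.\ $3$, of them, with dimensions summing to $5$, so some eigenspace $W$ has $\dim W\geqslant2$; then $G_p$ acts by a scalar on $W$, hence trivially on the line contained in $\bbP(W)\subset\bbP^4$, and that line meets the cubic $X$, producing a $G_p$-fixed point of $X$. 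For $p\in\{5,11\}$, write $G_p=\langle g\rangle$ and choose an eigenvector $e\in V$ of $g$ with eigenvalue $\zeta\neq1$ (possible since $V$ is faithful); as $p$ is prime, $\zeta$ is a primitive $p$-th root of unity. In $g$-eigencoordinates $x_1,\dots,x_5$ on $V$ with $x_1$ dual to $e$, let $F\in\Sym^3V^*$ be a $G$-invariant cubic defining $X$; because $G$ is perfect, the $G$-stable line $\bbC F$ carries the trivial character, so $g\cdot F=F$ and $F$ is a sum of $g$-eigenmonomials of weight $1$. The monomial $x_1^3$ has weight $\zeta^{-3}\neq1$ (as $\zeta$ has order $p\geqslant5$), so it does not occur in $F$; hence $F(e)=0$, and $[e]\in X$ is a $G_p$-fixed point. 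This establishes the claim.

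The three parts follow at once. (a) Under Conjecture~\ref{conj:GpVersalEnough} the claim forces $X$ to be $G$-versal, so $\ed(G)\leqslant3$; with $\ed(G)\geqslant3$ this gives $\ed(G)=3$. (b) Conjecture~\ref{conj:CSD} puts us in the conditional part of Theorem~\ref{thm:cubicVersal} (with $G$ finite and $\dim V\geqslant4$), where $G$-versality of $X$ is equivalent to versality for a Sylow $3$-subgroup; the case $p=3$ of the claim supplies the latter, so once again $\ed(G)=3$. (c) The group $G$ embeds into $\Cr(4)$ via $\PGL_5$, but into no finite subgroup of $\Cr(3)$ --- by Prokhorov's classification the finite simple subgroups of $\Cr(3)$ form the short list $\Alt_5,\Alt_6,\Alt_7,\PSL_2(\bbF_7),\SL_2(\bbF_8),\operatorname{PSp}_4(\bbF_3)$, none isomorphic to $\PSL_2(\bbF_{11})$ --- and, by the rank-$2$ statement above, into no finite subgroup of $\Cr(2)$; hence $\Crdim(G)=4$. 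Conjecture~\ref{conj:Dolgachev} then gives $\ed(G)\geqslant\Crdim(G)=4$, and with $\ed(G)\leqslant4$ we conclude $\ed(G)=4$.

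The hard part will be the cases $p=5$ and especially $p=11$ of the claim: there the eigenspace-dimension count breaks down (a generator of $G_{11}$ has five distinct eigenvalues on $V$, so its fixed locus in $\bbP^4$ consists of isolated points), and one must use the representation theory of $\PSL_2(\bbF_{11})$ --- in particular its perfectness, which upgrades ``$F$ is semiinvariant'' to ``$F$ is invariant'' and thereby rules out the monomial $x_1^3$. The other delicate ingredient is the Cremona-dimension input to part (c), which relies on the (deep) classifications of finite simple subgroups of $\Cr(2)$ and $\Cr(3)$ over $\bbC$.
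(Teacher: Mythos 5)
Your proof is correct and follows essentially the same route as the paper: the same Klein cubic, the same unconditional bounds $3\leqslant\ed(\PSL_2(\bbF_{11}))\leqslant4$, the same appeal to Corollary~\ref{cor.fixed-pt} via Sylow fixed points, and the same reading of the three conjectures. The one place you diverge is that the paper simply cites Beauville~\cite{BeauvilleED} for the existence of $G_p$-fixed points on $X$, whereas you unpack that claim into a short self-contained representation-theoretic computation (isotypic pigeonhole for $p=2,3$; the perfectness-of-$G$ weight argument for $p=5,11$) --- a nice bit of added transparency, but not a different strategy.
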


\begin{proof}
Consider the Klein cubic, i.e., the smooth cubic 
threefold $X \subset \bbP^4$ cut out by 
\[ x_0^2x_1 + x_1^2x_2 + x_2^2x_3 + x_3^2x_4 + x_4^2x_0 = 0 \, . \] 
The automorphism group of $X$ is $G= \PSL_2 (\bbF_{11})$.
The action of this group on $X$ is induced by a linear 
representation $\phi \colon G \to \GL_5$~\cite{adler}. 

It is easy to see that $3 \leqslant \ed( \PSL_2 (\bbF_{11})) \leqslant 4$. 
Indeed,
since $G$ is simple, the projection map $\bbC^5 \setminus \{ 0 \} \to \bbP^4$ 
shows that the $G$-action on $\bbP^4$ is generically free and very versal;
hence, $\ed( \PSL_2 (\bbF_{11})) \leqslant 4$.  The lower 
bound, $3 \leqslant \ed( \PSL_2 (\bbF_{11}))$,
follows from the fact that $\PSL_2 (\bbF_{11})$ cannot
act faithfully on a unirational surface; see~\cite{prokhorov, duncan2}.

Since $\phi$ is irreducible, no point of $X$ can be fixed by $G$.  
However, from~\cite{BeauvilleED} we see that $X$ has a $G_p$-fixed point
for any Sylow $p$-subgroup $G_p$ of $G$. 
Hence, by Corollary~\ref{cor.fixed-pt},
the $G_p$-action on $X$ is versal for every prime $p$. Now

\smallskip
(a) Conjecture~\ref{conj:GpVersalEnough}
implies that $X$ is $G$-versal. Thus
$\ed(G) \leqslant \dim(X) \leqslant 3$. 

\smallskip
(b) Conjecture~\ref{conj:CSD} also implies
that the $G$-action on $X$ is versal; see
Theorem~\ref{thm:cubicVersal}. Consequently, $\ed(G) \leqslant 3$,
as in part (a). 

\smallskip
(c) From work of Yu.~Prokhorov~\cite[Theorem~1.3]{prokhorov},
we see that there are no rational complex threefolds with a faithful 
action of $\PSL_2(\bbF_{11})$.  This means that
 $\Crdim (\PSL_2(\bbF_{11})) \geqslant 4$, 
and part (c) follows.
\end{proof}

We conclude that Conjectures~\ref{conj:CSD} and
\ref{conj:Dolgachev} are incompatible; they cannot both be true.
Same for Conjectures~\ref{conj:GpVersalEnough} and
~\ref{conj:Dolgachev}.

\begin{remark}
If we knew whether $\ed(\PSL_2 (\bbF_{11}))$ 
is $3$ or $4$, we would be able to complete the classification of finite 
simple groups of essential dimension $3$ over $\bbC$.
For details, see~\cite{BeauvilleED}.
\end{remark}

\begin{remark}
By~\cite[Theorem 1.5]{prokhorov},
there are only two equivariant birational equivalence classes of
rationally connected complex $\PSL_2(\bbF_{11})$-threefolds,
represented by the Klein cubic and a Fano threefold
of genus $8$.  These threefolds are (non-equivariantly) 
birationally equivalent~\cite[Remark 2.10]{prokhorov}. They 
are unirational (cf.~Theorem~\ref{thm:cubicUnirational}) but 
not rational~\cite{cg}.
\end{remark}

\begin{remark} \label{rem.quadric}
We are grateful to an anonymous referee for pointing out that
Theorem~\ref{thm:quadraticVersal} remains valid for an arbitrary
$G$-action on $X$, even if we do not assume that this action
comes from a linear representation $G \to \GL(V)$. 
Indeed, suppose $X \subset \bbP(V)$ 
is the zero locus of a non-degenerate quadratic form $q$ in $V$.
By~\cite[Corollary 69.6]{EKM} the natural inclusion
$\PGO(q) \hookrightarrow \Aut(X)$ is an isomorphism.
Thus the action of $G$ on $X$ gives rise to a morphism
$\psi \colon G \to \PGO(q)$, which fits into the diagram
\begin{equation} \label{e.rem-quadric}
 \xymatrix{ 1 \ar@{->}[r] & \bbG_m \ar@{->}[r]   
\ar@{=}[d] & G' \ar@{->}[r]  \ar@{->}[d]^{\phi} & 
G \ar@{->}[d]^{\psi} \ar@{->}[r] & 1 \cr 
1 \ar@{->}[r] & \bbG_m \ar@{->}[r]   
& \GO(q) \ar@{->}[r]  & \PGO(q) \ar@{->}[r] & 1 , }
\end{equation}
where $G'$ is the pull-back $G \times_{\PGO(q)} \GO(q)$.
Denote the connecting morphism
$H^1(K, \PGO(q)) \to H^2(K, \bbG_m)$
associated to the bottom sequence in~\eqref{e.rem-quadric}
by $\partial_K$. 
The connecting homomorphism for the top sequence is then 
given by the composition
\[ \partial_K^G \colon H^1(K, G) \xrightarrow{\psi_*} H^1(K, \PGO(q)) 
\xrightarrow{\partial_K} H^2(K, \bbG_m) \, , \] 
Now suppose $(T, K)$ is a twisting pair for $G$.
Then ${}^T X \subset {}^T \mathbb P(V)$, where 
${}^T \mathbb P(V)$ is a Brauer-Severi variety over $K$ whose class
is $\partial_K^G \, [T]$.
Here $[T]$ is the class of $T$ in $H^1(K, G)$
as in the proof of Proposition~\ref{prop:PnVersal}

We claim that, for any $\alpha \in H^1(K, \PGO(q))$,
the exponent of $\partial(\alpha) \in H^2(K, \bbG_m)$ is 
$1$ or $2$. Let us assume 
this claim for now. Taking $\alpha = \psi_* \, [T]$, 
we see that the exponent of $\partial_K^G \, [T]$ is $1$ or $2$.
Hence, the index of this class (or equivalently, the index of 
the Brauer-Severi variety ${}^T \mathbb P(V)$), is a power of $2$. 
If any of the conditions (a) - (f) of Theorem~\ref{thm:quadraticVersal}
hold, then ${}^T X$ (and thus ${}^T \mathbb P(V)$) has a point
or a $0$-cycle of odd degree over $K$. Hence, ${}^T \mathbb P(V)$ 
is split. Equivalently, $\partial_K^G \, [T] = 0$. Since this 
is true for every $T \in H^1(K, G)$ and every $K/k$, 
Proposition~\ref{prop:PnVersal} tells us that the top sequence
in~\eqref{e.rem-quadric} splits. Thus
$\psi \colon G \to \PGO(q)$ lifts to a linear 
representation $G \to \GO(q) \subset \GL(V)$, and we find ourselves 
in the setting of Theorem~\ref{thm:quadraticVersal}.

It remains to prove the claim. Let $n = \dim(V)$. If $n$ is odd, then 
by~\cite[Propositions 12.4 and 12.6]{BoI} the bottom short exact sequence
in~\eqref{e.rem-quadric} splits. Hence, in this case $\partial_K = 0$.
In other words, the exponent of $\partial_K(\alpha)$ is $1$ for every
$\alpha \in H^1(K, \PGO(q))$. 

Now assume that $n$ is even. In this case classes $\alpha \in H^1(K,\PGO(q))$ 
are in natural bijection with isomorphism classes 
of triples $(A, \sigma, f)$, where $A$ is a central 
simple algebra of degree $n$ and $(\sigma, f)$ is 
a quadratic pair on $A$; see~\cite[p. 409]{BoI}. 
The connecting map $\partial_K$ takes $\alpha$
to the Brauer class $[A] \in H^2(K, \bbG_m)$.  
By~\cite[Definition (5.4)]{BoI}, $\sigma$ is an involution of the first 
kind on $A$.  (Note that if $\Char(K) \ne 2$ then $\sigma$ is orthogonal 
and $f$ is uniquely determined by $\sigma$. In other words, 
in this case a quadratic pair on $A$ is the same thing 
as an orthogonal involution.) By a theorem 
of Albert (see, e.g.,~\cite[Theorem (3.1)]{BoI}), 
since $A$ has an involution of the first kind,
the exponent of $\partial_K(\alpha) = [A]$ is 
$1$ or $2$. This proves the claim.
\end{remark}

\begin{remark} \label{rem.quadric2}
Combining the argument of Remark~\ref{rem.quadric} with 
Lemma~\ref{lem:hilbertPoly}(c), we see that any action of
an algebraic group $G$ on a smooth irreducible quadric 
hypersurface $X \subset \mathbb{P}^n$ is $p$-versal, for 
every odd prime $p$.
\end{remark}

\subsection*{Acknowledgements}
The authors are grateful
to the anonymous referees for useful comments,
to J.-P.~Serre for allowing them to include his letter as an appendix,
and to
I.~Dolgachev, 
N.~Fakhruddin, 
R.~L{\"o}tscher, and
A.~Vistoli for helpful discussions.
The second author would like to thank the anonymous referee for~\cite{ckpr}
whose comments on versal actions served as a starting point 
for this project.

\section*{Appendix: Letter from J.-P.~Serre to Z.~Reichstein}

\hfill{Paris, June 10, 2010}

\medskip

   Dear Reichstein,

\medskip

   About ``versal'' :

\smallskip
There was first the notion of a ``universal object'', a notion which appeared
in several branches of mathematics around 1930--1950; there is even a
section of Bourbaki's {\em Th\'eorie des Ensembles} (chap.IV, $\S$2) 
on the general properties of this notion. An especially interesting 
case being the universal $G$-principal homogeneous space 
(now ``$G$-torsor''); the case of $G = \GL(n)$
was basically due to Chern. Such spaces ($E_G \to B_G$  was the standard
notation) were very useful to topologists; see e.g. Borel's thesis.

In the definition of ``universal'', there is a uniqueness property (up to
homotopy, sometimes) which is required. There are many interesting cases
where it does not hold (e.g. deformations of complex manifolds, \`a la
Kodaira- Spencer); people called them ``almost universal'' 
(or quasi , or  semi $\ldots$).  I do not know exactly when 
somebody had the amusing idea to call them ``versal'',
by deleting the ``uni'' which suggests uniqueness. I seem to remember
that it was Douady who did this (he enjoyed playing with words); the date
should be close to 1966, but I have not looked into his publications, and I
cannot give you a precise reference.\footnote{The earliest reference I
have been able to find is~\cite[p.~4]{douady}.  Z.R.}

That this idea applied to Galois cohomology was obvious from the beginning,
both to people with a topologist background (such as Rost or myself), and to
algebraists trying to parametrize equations (they rather used the word
``generic'', which I find a bit confusing.
\footnote{See Remark~\ref{rem.generic}. A.D.~and Z.R.})
But I don't think$^{(*)}$
the word ``versal'' got into print [in this context] before my UCLA lectures 
of 2001 
(do you know an earlier reference?)\footnote{The term {\em versal} 
is used in~\cite{br}, Section 7. The {\em versal polynomials} defined 
there give rise to versal $G$-torsors, in 
the sense of~\cite[Section I.5]{gms}, where $G$ is a finite group. Z.R.},  
even though I had used 
it in some College lectures around 1990 (especially those 
on ``negligible cohomology'', which were never written down).

  Note that the definition in UCLA has a rather non standard restriction:
it asks for a density property which may seem artificial (but it is
essential in Duncan's work!).

\smallskip
Best wishes,

J-P.Serre

\medskip
 (*) I have asked Google about ``versal torsor'', but all the references there
seem to be post 2001.

\end{document}